\newtheorem{lemma}{Lemma}
\newtheorem{theorem}{Theorem}
\newtheorem{proposition}{Proposition}
\newtheorem{definition}{Definition}
\newtheorem{corollary}{Corollary}
\title{
	\bf Stochastic Optimization under Hidden Convexity\thanks{
		This work is supported by ETH AI Center Doctoral Fellowship, NCCR Automation, and Swiss National Science Foundation. 
	}
}
\author{Ilyas Fatkhullin\thanks{Department of Computer Science, ETH Z\"urich, Switzerland
  (\href{mailto:ilyas.fatkhullin@ai.ethz.ch}{ilyas.fatkhullin@ai.ethz.ch}, \href{mailto:niao.he@inf.ethz.ch}{niao.he@inf.ethz.ch}).}
\and Niao He\footnotemark[2]
\and Yifan Hu\thanks{College of Management of Technology, EPFL and Department of Computer Science, ETH Z\"urich, Switzerland
	(\href{mailto:yifan.hu@epfl.ch}{yifan.hu@epfl.ch}).} }
\newcommand{\cD}{\mathcal{D}}
\newcommand{\cX}{\mathcal{X}}
\newcommand{\cU}{\mathcal{U}}
\newcommand{\Exp}[1]{{ \mathbb{E}}\left[#1\right]}
\newcommand{\Expu}[2]{{\mathbb{E}}_{#1}\left[#2\right]}
\newcommand{\ve}[2]{\langle #1 ,  #2 \rangle}
\newcommand{\norm}[1]{\left\| #1 \right\|}
\newcommand{\opnorm}[1]{\left\| #1 \right\|_{\text{op}}}
\newcommand{\pnorm}[1]{\left\| #1 \right\|}
\newcommand{\R}{\mathbb R}
\DeclareMathOperator*{\argmin}{arg\,min}
\newcommand{\sqnorm}[1]{\left\| #1 \right\|^{2}}
\newcommand{\sqpnorm}[1]{\left\| #1 \right\|^2}
\newcommand{\rb}[1]{\left(#1\right)}
\newcommand{\nfr}{\nicefrac}
\newcommand{\fr}{\frac}
\newcommand{\stepsize}{\eta}
\newcommand{\momentum}{\beta}
\newcommand{\cO}{{\mathcal O}}
\newcommand{\wt}{\widetilde}
\newcommand{\myitem}[1]{%
\item[#1]\protected@edef\@currentlabel{#1}%
}
\newenvironment{customproof}[1]
  {\par\noindent\textit{Proof of #1.}\space}
  {\hfill$\square$}
\begin{document}
	\date{}
	\maketitle
    	
	\begin{abstract}
	In this work, we consider constrained stochastic optimization problems under \textit{hidden convexity}, i.e., those that admit a convex reformulation via non-linear (but invertible) map $c(\cdot)$. A number of non-convex problems ranging from optimal control, revenue and inventory management, to convex reinforcement learning all admit such a hidden convex structure. Unfortunately, in the majority of applications considered, the map $c(\cdot)$ is unavailable or implicit; therefore, directly solving the convex reformulation is not possible. On the other hand, the stochastic gradients with respect to the original variable are often easy to obtain. Motivated by these observations, we examine the basic projected stochastic (sub-)\,gradient methods for solving such problems under hidden convexity. We provide the first sample complexity guarantees for global convergence in smooth and non-smooth settings. Additionally, in the smooth setting, we improve our results to the last iterate convergence in terms of function value gap using the momentum variant of projected stochastic gradient descent.  
	\end{abstract}
	
	

\tableofcontents

	\section{Introduction}

	We study constrained  stochastic  optimization  
	\begin{equation}
		\label{problem:original}
		\min_{x\in\cX}~F(x) := \Expu{\xi \sim \cD}{f(x, \xi)}, 
	\end{equation}
	where $\cX $ is a closed convex subset of $\R^d$, $\xi$ is a random variable following an unknown distribution $\cD$, and $F(\cdot)$ is possibly non-convex in $x$. Our central structural assumption about \eqref{problem:original} is that it admits a \textit{convex reformulation} of the form
	\begin{equation}
		\label{problem:reformulation}
		\min_{u\in \cU}~H(u):= F(c^{-1}(u)), 
	\end{equation}
	where $H(\cdot)$ is a convex function defined on a closed convex set $\mathcal{U} \subset \R^d$, and $c : \mathcal{X} \rightarrow  \mathcal{U}$ is an invertible map (with its inverse denoted by $c^{-1}(\cdot)$). This property is often referred to as \emph{hidden convexity} and frequently appears in various modern applications, for example, policy optimization in convex reinforcement learning and optimal control~\cite{zhang2020variational,sun2021learning,ying2023policy}, generative models~\cite{kobyzev2020norm_flows}, supply chain and revenue management~\cite{feng2018supply,chen2022efficient}, training neural networks~\cite{wang2020hidden,ergen2021global}. In the optimization literature, hidden convexity has been identified much earlier and dates back to at least 1990s in the context of quadratic optimization~\cite{stern1995indefinite,ben1996hidden}. Since then, several works have developed various tools to identify such property~\cite{ben2011hidden,ben2014hidden}. More recently, hidden convexity has been established for a wider classes of non-convex programs~\cite{Tibshirani2016hidden,xia2020survey,chancelier2021conditional} and for non-monotone games~\cite{emmanouil2021solving,mladenovic2021generalized}, to name just a few. {\color{black} The effect of general smooth parameterizations on the landscape of non-convex problems was recently studied in \cite{levin2022effect}.}

	Despite the existence of the convex reformulation, the transformation function $c(\cdot)$ is usually hard to compute or even unknown, and one cannot readily solve the convex reformulation. This motivates the use of (sub-)\,gradient methods that optimize $F(\cdot)$ directly over the variable $x \in \cX$. Perhaps, the most basic algorithm is the projected stochastic (sub-)\,gradient method (SM). Starting from $x^0 \in \cX$, SM generates a sequence $x^t$ via 
\begin{equation}\label{eq:subgradient_method}
	x^{t+1} = \Pi_{\mathcal{X}}(x^t-\stepsize  g(x^{t}, \xi^{t}) ) ,  
\end{equation}
where $g(x^{t}, \xi^{t})$ denotes an unbiased estimate of the (sub-)\,gradient of $F(\cdot)$ at a point $x^t$, and $\Pi_{\mathcal{X}}(\cdot)$ is the Euclidean projection onto a convex set $\cX$, see \Cref{sec:notations} for details. When $F(\cdot)$ is differentiable, this reduces to Projected SGD. Stochastic (sub-)\,gradient methods and their numerous variants have a long history of development since the first works on stochastic approximation appeared in 1950s \cite{robbins1951stochastic,Kiefer_Wolfowitz_1952,blum1954multidimensional,chung1954stochastic}.
Their analyses are richly documented for addressing convex problems and general nonconvex problems (refer to Appendix C for detailed summary); however, their convergence behaviors when dealing with hidden convexity still elude precise understanding.

    Although hidden convexity has been previously identified in certain applications, the analysis of gradient methods under this condition is mostly done on a case by case basis for specific applications and often requires strong additional assumptions \cite{zhang2021convergence,barakat2023RLwGU,chen2022network,chen2022efficient}. In this work, we formally consider general-purpose stochastic optimization under hidden convexity and study the sample complexities for solving such problems through projected stochastic (sub-)\,gradient method and the like.    
    
    \textbf{Contributions:}
	\begin{enumerate}
		\item We identify  key properties of hidden convex optimization and demonstrate how these conditions can be used to derive global convergence of gradient methods. 
		\item In the general non-differentiable case, we analyze convergence of the projected stochastic sub-gradient method (SM) and obtain $\wt \cO\rb{ \frac{ \ell }{\mu_c^2 } \frac{1}{\varepsilon}   + \frac{ \ell G_F^2 }{\mu_c^4  } \frac{1}{\varepsilon^3}  }  $ sample complexity for driving the Moreau envelope of \eqref{problem:original} $\varepsilon$-close to the optimal value in expectation. Here $\ell$ is weak convexity parameter of $F(\cdot)$, $G_F$ is the bound on (stochastic) subgradients, and $\mu_c$ is the modulus of hidden convexity of $F(\cdot)$ that relates to the invertibility of mapping $c(\cdot)$; see~\Cref{sec:HC_class} and \Cref{sec:subgradient_method} for details. To our knowledge, it is the first result to address the non-differentiable setting under hidden convexity.
        \item Next, we specialize our results to the differentiable smooth setting, and obtain a similar sample complexity for Projected SGD, replacing $G_F$ by the variance of stochastic gradients $\sigma$. Furthermore, we analyze the momentum variant of Projected SGD improving our result to the last iterate convergence in terms of the function value gap, i.e., we have $ \Exp{F(x^T) - F^*} \leq \varepsilon$ after $T = \wt \cO\rb{ \frac{ L }{\mu_c^2 } \frac{1}{\varepsilon}   + \frac{ L \sigma^2 }{\mu_c^4  } \frac{1}{\varepsilon^3}  }  $ iterations/stochastic gradient calls, where $L$ is the Lipschitz constant of $\nabla F(x)$.  
        \item In the presence of strong convexity of the reformulated problem, we further improve the sample complexity for all above mentioned algorithms. For instance, Projected SGD attains $\wt \cO\rb{ \frac{ L }{\mu_c^2 \mu_H }   + \frac{ L \sigma^2 }{\mu_c^4 \mu_H^2  } \frac{1}{\varepsilon}  }  $ sample complexity for achieving an $\varepsilon$-optimal solution, where $\mu_H>0$ corresponds to the strong convexity of $H(\cdot)$.

	\end{enumerate}	

Importantly, we show that all studied algorithms provably converge in online fashion, using only one stochastic gradient at every iteration.

    \subsection{Related work} 
    
    {\color{black} Perhaps, one of the first works using hidden convexity to analyze iterative methods is \cite{nesterov2006cubic}, which established convergence rates of cubic regularized Newton method. However, it remains unclear how to extend their results to non-smooth or stochastic setting.} The most closely related to our work are \cite{zhang2020variational,zhang2021convergence,barakat2023RLwGU,chen2022network,ghai2022_alg_equivalence}, which analyze gradient methods under similar structural assumptions in the context of specific applications. 
    
    \textit{Policy gradient methods in RL.} Several work \cite{zhang2020variational,zhang2021convergence,barakat2023RLwGU} exploited properties similar to hidden convexity in reinforcement learning (RL) applications and analyzed policy gradient (PG) type methods with global convergence guarantees. In \cite{zhang2020variational}, the authors considered a PG method with projection, but it is only limited to the case where the exact gradients are available. It is unclear how to extend the technique in their work to the case of stochastic gradients with bounded variance (without resorting to large batches). Next, \cite{zhang2021convergence} considered the stochastic setting and proposed a variance-reduced PG method with truncation using large batches of trajectories. Recently, \cite{barakat2023RLwGU} removed the requirement for large batches using a normalized variance-reduced PG method. However, their results are difficult to extend to the constrained case due to the normalization. Moreover, both works \cite{zhang2021convergence,barakat2023RLwGU} utilize variance reduced estimators, which require additional smoothness assumptions for theoretical analysis.

    \textit{Stochastic gradient methods in revenue management.} 
    A different line of works \cite{chen2022efficient,chen2022network} considered hidden convex objectives in revenue management and studied global convergence of gradient-based methods over $\mathcal{X}$. For a special revenue management problem, \cite{chen2022efficient} introduced a preconditioned gradient-based method that obtains an $\wt{\mathcal{O}}(\epsilon^{-2})$ sample complexity under the assumptions that the domain $\mathcal{X}$ is a box constraint, the transformation function $c(x) = \Exp{c(x, \xi)}$ is separable and the additional access to $c(x, \xi)$ is available. Leveraging the box constraint structure, \cite{chen2022efficient} also analyzed Projected SGD and derive $\wt{\mathcal{O}}(\epsilon^{-4})$ sample complexity. 
    In contrast, we show that Projected SGD can achieve a better $\wt \cO(\epsilon^{-3})$ sample complexity for a general convex compact constraint $\cX$, and further extend the results to non-smooth setting.

    \textit{Nonconvex online learning.} Recently, \cite{ghai2022_alg_equivalence} considered a structural property similar to hidden convexity and imposed strong assumptions on the reparameterization map $c(\cdot)$ (see Assumptions 1, 2 and 4 therein) under which non-convex online gradient descent in the original space $\cX$ is equivalent to online mirror descent for the (convex) reformulated problem. Such equivalence allows them to demonstrate an $\cO(T^{\nfr{2}{3}})$ regret bound. Instead, we directly derive the last iterate convergence in the function value using a different technique and make less restrictive assumptions on $c(\cdot)$, which allows us to cover a wide range of applications.

 \textit{Related structural assumptions.} 
        We mention that several other non-convex structural assumptions have been explored in optimization literature that also ensure global convergence, including essential strong convexity \cite{liu2014asynchronous}, quasar (strong) convexity \cite{hinder2020near}, restricted secant inequality \cite{zhang2013gradient}, error bounds \cite{luo1993error}, quadratic growth \cite{bonnans1995second}, Polyak-{\L}ojasiewicz (P{\L}) condition \cite{polyak1963gradient,lojasiewicz1963property} -- also known as global Kurdyka-{\L}ojasiweicz (K{\L}) or gradient domination condition. For an in-depth discussion on the relationships between these properties, refer to ~\cite{karimi2016linear,rebjock2023fast} and references therein. Notably, the P{\L} condition along with its various generalizations to constrained minimization such as Proximal-P{\L} \cite{karimi2016linear} and variational gradient dominance \cite{xiao_PGM} have gained popularity in the recent years. The convergence of gradient-based methods under the P{\L}-type conditions has been extensively analyzed, e.g., in the deterministic setting~\cite{karimi2016linear,Yue_PL_LB_2023} and the stochastic setting~\cite{KL_PAGER_Fatkhullin,fontaine2021convergence,khaled2020better,scaman22a,Li_RR_under_KL_2023,chouzenoux2023kurdyka,yuan2022general,Fatkhullin_SPGM_FND_2023}. Unfortunately, despite a few examples~\cite{fazel2018global,Optimizing_LQR_21,ding2022global,xiao_PGM} that show some variants of the P{\L} condition hold,  how to verify P{\L}-like conditions for non-convex problems remains a big question in general. The situation becomes even more challenging, when dealing with constrained optimization and/or non-differentiable objectives, where a suitable generalization of the gradient dominance needs to be introduced and carefully studied. 
        {\color{black} 
        For a non-differentiable $F$, convergence results under the  gradient domination condition often make very strong algorithmic dependent assumptions (e.g., strong descent property), and are difficult to extend to stochastic setting \cite{absil2005convergence,attouch2009convergence,attouch2010proximal,frankel2015splitting}. In the constrained setting, a suitable generalization of the gradient dominance is often algorithmic dependent. Specifically, one needs to replace the gradient norm by a suitable generalized notion of stationarity, e.g. Frank-Wolfe gap or gradient mapping \cite{xiao_PGM}. However, verifying such algorithmic dependent assumptions can be difficult in specific applications.} Unlike the gradient domination condition, the hidden convexity considered in this work is a very natural property, which easily extends to constrained optimization and non-differentiable objectives.

	 \subsection{Notations and organization}\label{sec:notations}
In the following, we briefly revisit some basic notations from the convex analysis. 
	Throughout, we denote by $\ve{\cdot}{\cdot}$ the inner product in $\R^d$ along with its induced Euclidean norm $\norm{\cdot}$. For a real valued matrix $A\in \R^{m\times n}$, we denote by $\opnorm{\cdot}$ its operator norm, i.e.,  $\opnorm{A} := \max_{\norm{x}\leq 1}\norm{A x}$. {\color{black} We denote the interior and the boundary of $\cX$ as $\operatorname{int} \cX$ and $\partial \cX$ respectively.} 
	The map $c: \cX \rightarrow \cU$ is called invertible if there exists a map $c^{-1}: \cU \rightarrow \cX$ (called inverse) such that $c^{-1}(c(x)) = x$ for any $x \in \cX$ and $c( c^{-1}(u) ) = u$ for any $u \in \cU$. For any $u, v\in\cU$ and any $\lambda\in[0,1]$, if  $(1-\lambda) u + \lambda v \in \cU$, we say $\mathcal{U}$ is convex. We denote the diameter of $\cU$ as $D_{\cU}:= \sup_{u, v\in \cU}\pnorm{u - v}$.
	For a function $H: \cU \rightarrow \R \cup \{+\infty\} $, if there exists $\mu_H \geq 0$ such that 
	for all $u, v \in \cU$ and $ \lambda \in [0,1]$, it holds 
	$	H( (1-\lambda)u + \lambda v ) \leq  (1- \lambda) H(u) +  \lambda  H(v)  -   \fr{(1-\lambda)\lambda \mu_H}{2} \sqpnorm{ u - v }, $
	we call $H$ convex on  $\cU$ if $\mu_H = 0$, and $\mu_H$-strongly convex on $\cU$ if $\mu_H>0$.

A function $F: \cX \rightarrow\R \cup \{+\infty\}$ is $\ell$-weakly convex ($\ell$-WC) if for any fixed $y\in \cX$, $F_{\ell}(x,y) := F(x) + \frac{\ell}{2} \, \sqnorm{x- y}$ is convex in $x \in \cX$.
If $x\notin \cX$, we assign $F(x) = +\infty$. The (Fr\'echet) sub-differential of $F$ at $x\in \cX$ is $\partial F(x) := \left\{g \in \mathbb{R}^d \mid  F(y) \geq F(x)+\langle g, y-x\rangle+o(\|y-x\|), \forall y \in \R^d  \right\}.$ The elements $g \in \partial F(x)$ are called sub-gradients of $F$ at $x$, see \cite{davis2019proximally} for alternative equivalent definitions of the sub-differential set for $\ell$-WC functions. A differentiable function $F: \cX \rightarrow \R$ is $L$-smooth on $\cX \subset \R^d$ if its gradient is $L$-Lipschitz continuous on the set $\cX$, i.e., it holds $\norm{\nabla F(x) - \nabla F(y)} \leq L \norm{x - y}$ for all $x, y\in \cX$. For a convex set $\cX \subset \R^d$, the projection of a point $y\in \R^d$ onto $\cX$ is  $\Pi_{\cX}(y) := \argmin_{x\in\cX}\norm{y - x}$. We denote $\delta_{\cX}$ as the indicator function of a set $\cX \subset \R^d$ and define $\delta_{\cX}(x)  = 0$ if $x\in \cX$ and $\delta_{\cX}(x)  = +\infty$ otherwise. 
    We define by $\cX^* \subset \cX$ the set of optimal points of $\min_{x\in\cX} F(x)$ and by $F^*$ its optimal value. A point $\bar{x} \in \cX$ is called a stationary point of a weakly convex function $F: \cX \rightarrow \R$ if $0 \in \partial (F +  \delta_{\cX})(\bar{x})$.
	For any function $\Phi$ and a real $\rho > 0$, we define the Moreau envelope and the proximal mapping as follows
	\begin{eqnarray}
		\Phi_{1/\rho}(x) := \min_{y\in \R^d} \left\{\Phi(y) + \frac{\rho}{2} \sqnorm{ y - x }  \right\},  \quad \operatorname{prox}_{  \Phi/\rho }(x) := \argmin_{y\in \R^d } \left\{  \Phi(y) + \frac{\rho}{2} \sqnorm{ y - x } \right\} . \notag 
	\end{eqnarray}
		

  The rest of the paper is organized as follows. We formally introduce the hidden convex function class in \Cref{sec:HC_class} followed by motivating examples. The properties of hidden convex optimization, which are useful to analyze global convergence of gradient methods are in \Cref{sec:prop_HC}. Our main global convergence results are in \Cref{sec:subgradient_method,sec:PSGD,sec:PSGDM} for subgradient methods, Projected SGD, and Projected SGD with momentum, respectively. The conclusion follows in \Cref{sec:conclusions}.

	\section{Hidden Convex Problem Class}\label{sec:HC_class}
 
The existence of a convex reformulation \eqref{problem:reformulation} for the problem \eqref{problem:original} signifies its representation as a compositional optimization in the form: 
	\begin{equation}
		\label{problem:original_composition}
		\min_{x\in\cX}~F(x) := H(c(x)) . 
	\end{equation}
Formally, we make the following definition. 
 \begin{definition}
The  above problem is called {hidden convex} with modulus $\mu_c>0, \mu_H\geq 0$ (or function $F$ is hidden convex on $\cX$) if its components satisfy the following underlying conditions. 
	\begin{enumerate}
		\myitem{C.1}\label{C1} The domain $\mathcal{U}=c(\mathcal{X})$ is convex, the function $H: \cU \rightarrow \R$ satisfies that for all $u, v \in \cU$ and $ \lambda \in [0,1]$, 
  \begin{equation}\label{assumption:H_conv}
  	H( (1-\lambda)u + \lambda v ) \leq  (1- \lambda) H(u) +  \lambda  H(v)  -   \fr{(1-\lambda)\lambda \mu_H}{2} \sqpnorm{ u - v }, 
  \end{equation}
  and \eqref{problem:reformulation} admits a solution $u^* \in \cU$.  
  
		\myitem{C.2} The map $c: \cX \rightarrow \cU$ is invertible. There exists $\mu_c>0$ such that  for all $x, y \in \cX$ it holds  \label{C2}
		\begin{equation}\label{assumption:g_inv}
			\pnorm{c(x) - c(y) } \geq \mu_c \pnorm{x - y }.
		\end{equation}
	\end{enumerate}
In particular, if $\mu_H>0$, we say the above problem is {$(\mu_c,\mu_H)$-hidden strongly convex}.  
\end{definition}

 {\color{black}
\textit{Remarks on condition C.2.} First, we notice that \eqref{assumption:g_inv} is equivalent to $1/\mu_c$-Lipschitz continuity of the inverse map $c^{-1}$, i.e.,
\begin{equation}
\label{eq:lipschitz_c_inverse}
    \pnorm{c^{-1}(u) - c^{-1}(v) } \leq \mu_c^{-1} \pnorm{u - v } \qquad \text{for all } u, v \in \cU. 
\end{equation}
Second, if the inverse map $c^{-1}(\cdot)$ is continuously differentiable\footnote{This can be verified using a global inverse function theorem \cite{krantz2002implicit,ivanov2023hadamard} if $c(\cdot)$ is continuously differentiable.} on $\cU$, then there are simple necessary and sufficient conditions, which can help to verify \eqref{assumption:g_inv}. 
Using standard arguments (see, e.g., Section 1.1.2. in \cite{polyak1987introduction} or Section 1.2.2. in \cite{nesterov2018lectures}), we can show that 
\eqref{eq:lipschitz_c_inverse} is equivalent to
\begin{equation}\label{eq:C2_sufficient}
 \norm{\nabla c^{-1}(u)}_{\text{op}} \leq 1/\mu_c \qquad \text{for any } u \in \cU .  
\end{equation}


 When $c(\cdot)$ is continuously differentiable on $\cX$, condition \eqref{assumption:g_inv} implies $\norm{\nabla c(x) z } \geq \mu_c \norm{z} $ for any $x \in \cX$ and $z \in \R^d$, see the proof of \Cref{le:HC_KL} for details.
 We remark that, in general, the map in \cref{C2} is not required to be continuously differentiable and the results of \Cref{sec:subgradient_method} do not require a differentiable $c(\cdot)$. 
 
}
 \textit{Simple examples.}  Notice that the hidden convex problem class includes the convex problem as a special case  when the transformation map $c(\cdot)$ is identical. In addition, it also includes many non-convex problems. For a simple example, let $0<\delta\leq 1$ and consider $\cX = [\delta, 1]$, $c(x) = x^2$, $H(u) = - u$. Then $F(x) = -x^2$, albeit concave,  is hidden convex on $\cX$ by the construction. Another simple example considers $0<\delta < \pi$ and $\cX = [\delta, 2\pi - \delta]$,  {\color{black}$c(x) = \cos(x/2)$, $H(u) = 2 u^2 - 1$}. The obtained composition $F(x) = 2 \cos^2(x/2) - 1 = \cos(x)$ is hidden strongly convex on $\cX$ with $\mu_c = \sin(\delta/2) / 2$, although it is both non-convex and non-concave on $\cX$. {\color{black} From the above toy example, it already becomes clear that the condition $\mu_c > 0$ in \cref{C2} is necessary for global convergence of Projected GD (e.g., merely assuming an invertible $c(\cdot)$ is not sufficient). Indeed, by taking $\delta = 0$, we have $\mu_c = 0$, but this generates a spurious stationary point at $x = 0$, which is also a maxima, where GD gets stuck.} 
 
 In what follows, we present several more practical problems, which belong to our hidden convex  class.

	\subsection{Non-linear least squares \cite{nocedal1999numerical,nesterov2006cubic,drusvyatskiy2019efficiency}}
	Consider solving a system of nonlinear equations, e.g., $c(x)=0$ with $c(x) = (c_1(x), \ldots, c_d(x))^{\top}$ for $x\in\R^d$. This problem can be equivalently formulated as 
	\begin{equation}\label{eq:nonlinear_LS}
     \min_{x\in \R^d} \sum_{i=1}^{d} c_i^2(x)  \qquad \text{or }  \qquad  \min_{x\in \R^d}  \max_{1\leq i \leq d } | c_i(x) |.
	\end{equation}
 When $c(\cdot)$ is invertible and condition \eqref{assumption:g_inv} holds, it belongs to the hidden convex optimization class. {\color{black} Despite having a benign reformulation, such problems can be extremely difficult to solve to global optimum. For example, Jarre~\cite{jarre2013nesterov} study the following function
 \begin{equation}\label{eq:Nesterov_Rosenbrock}
 F(x) = \frac{1}{4}(x_1 - 1)^2 + c \sum_{i=1}^{d-1} (x_{i+1} - 2 x_i^2 - 1)^2 
\end{equation}
 for some constant $c > 0$ (independent of dimension). Despite being hidden convex and having a unique minima, it is shown that any descent method initialized at $x^0 = (-1, 1, \ldots, 1)$ takes an exponential number of iterations (in dimension $d$) to reduce the function value by $75 \%$. 
 
 In practice, solving problems of type \eqref{eq:nonlinear_LS} can be challenging even when the dimension is moderate and other challenges such as non-smoothness of $c(x)$ and $H(u)$ persist, see e.g., non-smooth variants of \eqref{eq:Nesterov_Rosenbrock} in \cite{gurbuzbalaban2012nesterov}. In \cref{fig:chebyshev_plot}, we illustrate the contour plot of such functions and their convex reformulations for dimension $d = 2$. 
}

 \begin{figure}
 	\centering
 	\includegraphics[width=\textwidth]{./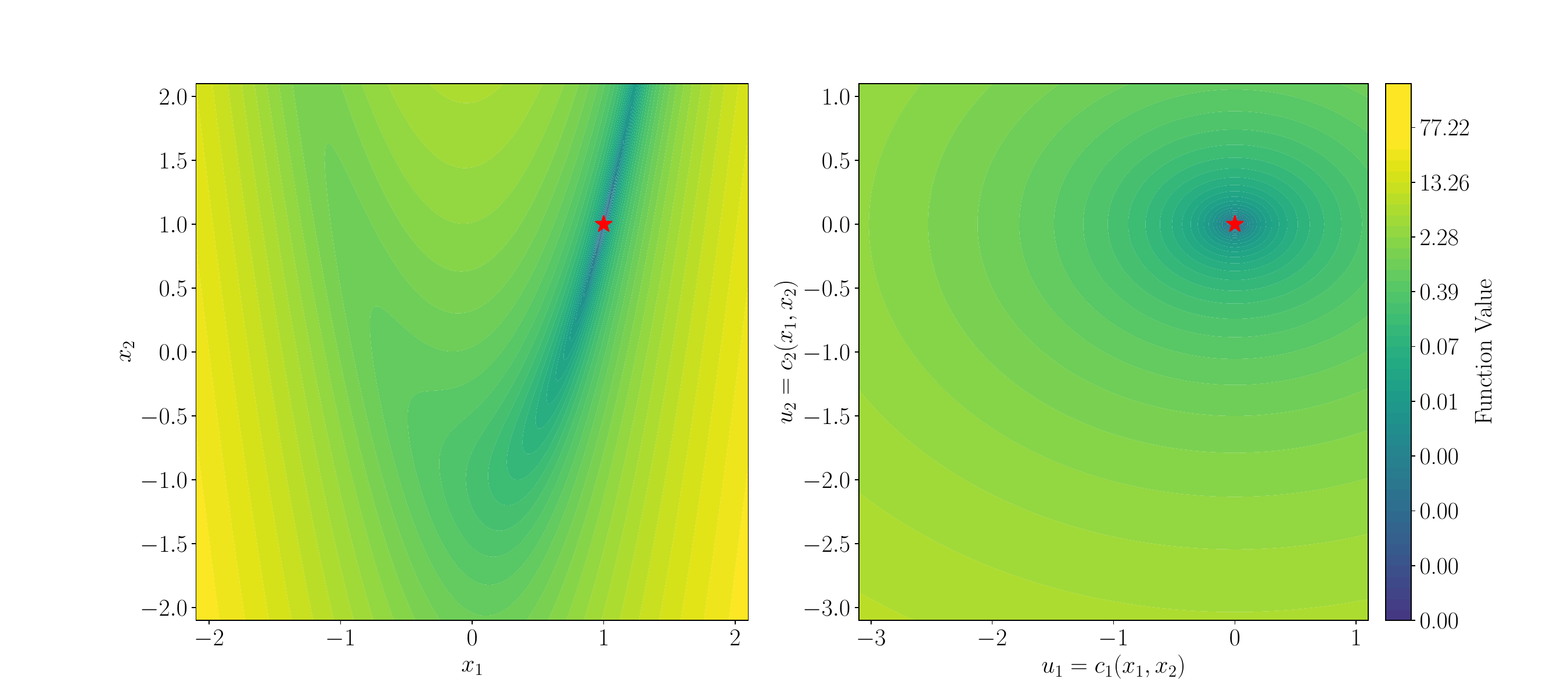}
 	\label{fig:chebyshev_plot}
 	\includegraphics[width=\textwidth]{./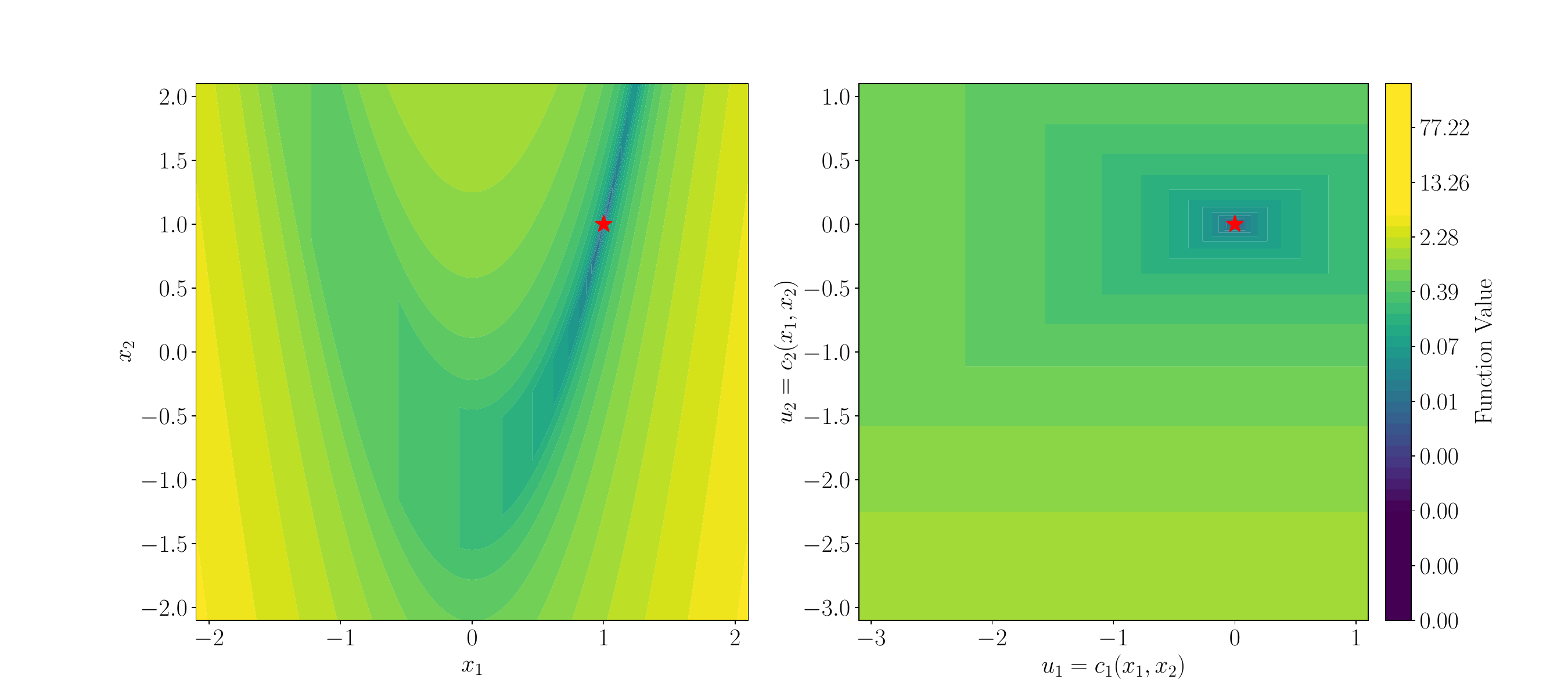}
 	\label{fig:chebyshev_plot_max}
 	\caption{The contour plots of the functions $F(x) = \frac{1}{4} (x_1 - 1)^2 + \frac{1}{2} (2 x_1^2 - x_2 - 1)^2$ (top), and $F(x) = \max\left\{\frac{1}{4} | x_1 - 1 | , \frac{1}{2} | 2 x_1^2 - x_2 - 1 | \right\}$ (bottom), $x = (x_1, x_2)^{\top}$ The left plots present the contour plots in the original space $\cX$ and the right plots illustrate the reformulated space $\cU$. The red star denotes the global minimum.  }
 \end{figure}
 
	\subsection{Minimizing posinomial functions~\cite{boyd2007_GP_tutorial,duffin1967geometric}} For power control in communication systems and optimal doping profile problems \cite{boyd2007_GP_tutorial,duffin1967geometric}, one often needs to minimize posinomial functions
 $F(\cdot) : \R_{++}^d \rightarrow \R$ of the following form
	$$
	F(x) = \sum_{k = 1}^{K} b_k x_1^{a_{1k}} \cdots x_d^{a_{d k}}, 
	$$
	where $b_k > 0$ and $a_{i k } \in \R$ for all $k = 1, \cdots, K$, $i = 1, \cdots, d$. The function $F(\cdot)$ is non-convex but admits a convex reformulation via a variable change $u= c(x) := [\log(x_1),\ldots,\log(x_d)]^\top$. The convex reformulation is of the form
	$$
	H(u) = F(c^{-1}(u))=\sum_{k = 1}^{K} b_k e^{a_{1k} u_1 } \cdots e^{a_{d k} u_d} , 
	$$
	where $H(\cdot)$ is convex. 
 
    We can verify that the above problem is hidden convex on a convex compact set $\cX \subset \R_{++}^d$. {\color{black} Specifically, notice that for any $i = 1,\ldots, d$ and $x\in \cX$, the function $\log(x_i)$ is $(\max_{x \in \cX} x_i)^{-1}$-strongly monotone. Therefore, condition \eqref{C2} holds with $\mu_c = (\max_{x \in \cX} \norm{x}_{\infty})^{-1}$.\footnote{We remark that in this example the parameter $\mu_c$ depends on the upper limit of the variables $x$. On the other hand, the lower limit, $\min_{1\leq i \leq d} \min_{x\in \cX} x_i$, influences the diameter $D_{\mathcal U}$ of the set $\mathcal U = c(\cX)$. This quantity will also frequently appear in the complexity guarantees in \Cref{sec:subgradient_method,sec:PSGD,sec:PSGDM}. }
    }

		\subsection{System level synthesis in optimal control \cite{anderson_2019_SLS}}
		Consider a linear time-varying system
		$$
		x(t+1) = A_t \, x(t) + B_t \, u(t) + w(t) , \qquad t = 0, \ldots, T , 
		$$
		where $x(t) \in \mathbb{R}^n$ is a state, $u(t) \in \mathbb{R}^p$ is a control input, and $w(t) \in \mathbb{R}^n$ is an exogenous disturbance process, and $x(0), w(t) \sim \mathcal{N}(0, \Sigma)$ are independent for $t=0,\ldots, T$. 
  Matrices $A_t \in \R^{n\times n}$ and $B_t \in \R^{n\times p}$ determine the system dynamics.  
		Define $\mathbf{x}=( x(0), \ldots, x(T) )^{\top}$, $\mathbf{u}=( u(0), \ldots, u(T) )^{\top}$, $\mathbf{w}=( x(0), w(0), \ldots, w(T-1) )^{\top}$, and consider a time-varying controller of the form $u(t) = \sum_{i = 0}^{t} K(t, t - i) x(i)$, which depends on a control matrix
		$$
		\mathbf{K}=\left[\begin{array}{cccc}
			K(0,0) & & & \\
			K(1,1) & K(1,0) & & \\
			\vdots & \ddots & \ddots & \\
			K(T, T) & \cdots & K(T, 1) & K(T, 0)
		\end{array}\right].
		$$ 
  The goal of the system level synthesis is to find a control policy to minimize some loss functions, e.g., quadratic in $\mathbf x$ and $\mathbf u$:
		$F(\mathbf K) := \Exp{ \mathbf x^{\top} \mathcal{Q} \mathbf{x} +  \mathbf u^{\top} \mathcal{R} \mathbf{u}  } , $ where $\mathcal{Q} = \text{diag}(Q_0, \ldots, Q(T)) \succeq 0$ and $\mathcal{R} = \text{diag}(R_0, \ldots, R(T)) \succeq 0 $.
 
		Despite the fact that $F(\cdot)$ is  convex in both $\mathbf{x}$ and $\mathbf{u}$, it is non-convex in the decision variable $\mathbf{K}$. Nevertheless, it admits a convex reformulation \cite{anderson_2019_SLS}  of the form 
		$$
		\min_{\Phi_{\mathbf{x}}, \Phi_{\mathbf{u}} } H(\Phi_{\mathbf{x}}, \Phi_{\mathbf{u}} ) , \quad \text{s.t. } \mathbf{M} \left[\begin{array}{c}
			\Phi_{\mathbf{x}} \\ \Phi_{\mathbf{u}} 
		\end{array}\right] = I, \quad \Phi_{\mathbf{x}} , \Phi_{\mathbf{u}} \text{ are lower-block-triangular},
		$$
		where $\Phi_{\mathbf{x}}, \Phi_{\mathbf{u}} \in \R^{(T+1)\times (T+1)}$ are the new variables, $H(\cdot)$ is a strongly-convex function of $\Phi := (\Phi_{\mathbf{x}}, \Phi_{\mathbf{u}})$. $\mathbf{M} \in \R^{(T+1)\times (T+1)} $ is a deterministic matrix, which depends on matrices $A_t$, $B_t$, $t = 0, \ldots, T-1$, and $I$ is the identity matrix. Moreover, there exists a bijection between variables $\mathbf{K}$ and $\Phi$ subject to the constraints of the reformulated problem. The (inverse of the) map $c(\cdot)$ is given by $\mathbf{K} = c^{-1}(\Phi) := \Phi_{\mathbf{u}} \Phi_{\mathbf{x}}^{-1} $ \cite{anderson_2019_SLS}. {\color{black} Therefore, we recognize the hidden convex structure \eqref{problem:original_composition} with invertible $c(\cdot)$.\footnote{{\color{black} Note that, however, it remains elusive whether the condition \eqref{assumption:g_inv} holds across the entire domain. It is possible that a sufficient condition such as \eqref{eq:C2_sufficient} can be verified on the trajectory of some optimization methods instead. }}}
		A number of other problems in optimal control also admit suitable convex reformulations. We refer readers to \cite{boyd1994linear,sun2021learning} for more examples.

	\subsection{Convex reinforcement learning~\cite{zhang2020variational}}
	Convex reinforcement learning (RL) problem generalizes the classical RL setting. It bases on a discounted Markov Decision Process~$\mathbb M(\mathcal{S}, \mathcal{A}, \mathcal{P}, H, \rho, \gamma)$, where $\mathcal{S}$ and $\mathcal{A}$ denote the (finite) state and action spaces respectively, $\mathcal{P}: \mathcal{S}\times\mathcal{A} \to \Delta(\mathcal{S})$ is the state transition probability kernel (where $\Delta(\mathcal{S})$ denotes the distribution over $\mathcal{S}$), $\rho$~is the initial state distribution and $\gamma \in (0,1)$ is the discount factor.  A stationary policy~$\pi: \mathcal{S} \to \Delta(\mathcal{A})$ maps each state~$s \in \mathcal{S}$ to a distribution~$\pi(\cdot|s)$ over the action space $\mathcal{A}$. The set of all (stationary) policies is denoted by~$\Pi$\,.
	At each time step~$h \in \mathbb{N}$ in a state~$s_h \in \mathcal{S}$, the RL agent chooses an action~$a_h \in \mathcal{A}$ with probability~$\pi(a_h|s_h)$ and the environment transitions to a state~$s_{h+1}$ with probability~$\mathcal{P}(s_{h+1}|s_h, a_h)\,.$ We denote by~$\mathbb P_{\rho,\pi}$ the probability distribution of the Markov chain~$(s_h,a_h)_{h \in \mathbb{N}}$ induced by the policy~$\pi$ with an initial state distribution~$\rho$. 
	For any policy~$\pi \in \Pi$, we define the state-action occupancy measure 
	\begin{equation}
		\label{eq:s-a-occup-measure}
		\lambda^{\pi}(s,a) := \sum_{h=0}^{+\infty} \gamma^h \mathbb P_{\rho,\pi}(s_h = s, a_h = a)\, \qquad \text{for all} ~ a \in \mathcal{A}, \, s \in \mathcal{S} \,.
	\end{equation}
 The set of such state-action occupancy measures is denoted by $\cU := \{ \lambda^{\pi} : \pi \in \Pi\}\,.$
 
 Different from the classical RL,  convex RL considers a general (convex) utility function $H:\cU \to \R$ that maps the state-action occupancy measure to a cost and aims to find a policy that minimizes the cost
    \begin{equation}\label{eq:convex_RL_obj}
		\min_{\pi \in \Pi} F(\pi) := H(\lambda^{\pi}) .
	\end{equation}
Notice that $F(\cdot)$ is not convex in $\pi$ in general. However, for several commonly used utility functions, $H(\cdot)$ exhibits convexity in the occupancy measure $\lambda^\pi$. For standard RL, $H(\lambda^{\pi}) = r^\top \lambda^{\pi}$ is linear in $\lambda^{\pi}$, where $r$ is the reward vector. For the pure exploration setting, focused on fully exploring the transitions in the environment, $H(\lambda^{\pi})$ represents the negative entropy of $\lambda^{\pi}$, which is also convex ~\cite{zhang2020variational}. For the imitation learning where the objective is to imitate the expert's behavior given their sampled trajectories, $H(\lambda^{\pi})$ denotes the KL-divergence between $\lambda^{\pi}$ and the state-action occupancy measure learned from the expert's sampled trajectories, which is also convex~\cite{zhang2020variational}. Thus, the convex RL problem belongs to the hidden convex class with $\cX = \Pi$ and $c(x) = \lambda^{\pi}$ (with $x = \pi$). Under mild assumptions on the initial distribution $\rho$, the constant $\mu_c > 0 $ can be estimated, {\color{black} see e.g., Proposition H.1. in \cite{zhang2020variational}}. Note that in convex RL, we can control $\lambda^\pi$ only implicitly by changing the policy $\pi$. The exact computation of the transformation map and its inverse requires the knowledge of the state transition probability kernel and can be computationally expensive.

\subsection{Revenue management and inventory control~\cite{chen2022efficient,chen2022network}}
Consider a booking limit control in a passenger network revenue management problem. The goal is to maximize the revenue by finding an optimal booking limit threshold for each demand class, e.g., flying from New York to Seattle with economy class.  Such a problem forms a two-stage stochastic programming such that
\begin{equation}
	\label{eq:network_revenue_management}
	\begin{aligned}
		\min_{x\in[0,D]^d} &\quad F(x):= \mathbb{E}_{\xi}[ r^\top (x\wedge \xi) - \mathbb{E}_{\eta} \Gamma(x\wedge\xi,\eta) ] \\
		\mathrm{where} & \quad \Gamma(x\wedge \xi, \eta) = \min_{0\leq w\leq x\wedge \xi} \{l^\top (x\wedge\xi-w)\mid Aw\leq \eta\},  
	\end{aligned}
\end{equation}
where $d$ denotes the number of demand classes in the airline networks, $x \in \R^d$ is the booking limit control threshold for each demand class, $\xi$ is the random demand vector (of the same dimension as $x$) during the reservation stage, $x\wedge \xi$ denotes the number of reservations accepted, and $r^\top (x\wedge \xi)$ denotes revenue collected during the reservation stage with $r \in \R^d$ being the price vector. In the service stage, $\Gamma(x\wedge\xi,\eta)$ denotes the penalty on the airline companies when there are $x\wedge \xi$ number of reservations with plane seats capacity $\eta$ that is random,  $w $ is the actual number of passengers that can get on the plane, $l$ is the penalty vector for declining passengers with reservation to get on the plane. Notice that $F$ is non-convex in $x$ due to the truncation between $x$ and $\xi$. When $\xi$ admits component-wise independent coordinates {\color{black} and $x < \text{esssup} \, \xi$}, this problem admits a convex reformulation via a variable change \cite{feng2018supply,chen2022efficient}, i.e., $u= c(x) =\mathbb{E}_\xi [x\wedge\xi]$. {\color{black} Note that the upper bound on the booking limit threshold $D < \text{esssup} \, \xi$, the largest possible demands, naturally holds in revenue management and implies that Condition C.2 holds.} Note that comparing to previous applications, the transformation function involves unknown distribution and thus is not explicitly known. 

For more examples of hidden convex problems in operations research, we refer readers to \cite{feng2018supply} about supply chain management and \cite{chen2022network,miao2021network} about revenue management.

	\section{Properties of Hidden Convex Optimization}\label{sec:prop_HC}
In this section, we provide key properties of hidden convex problems and discuss its connections with gradient dominated function classes.    
	\subsection{Globally optimal solution}
The following proposition suggests that every stationary point of a hidden convex function is a global minima. 

\begin{proposition}\label{prop:global_opt}
		Let $F(\cdot)$ be weakly convex and hidden convex on $\cX$ with $\bar{x} \in \cX$ being its stationary point. If the map $c(\cdot)$ is differentiable at $\bar{x}$
        , then $\bar{x}$ is a global minimum, i.e., $F(\bar{x}) \leq F(x)$ for any $x \in \cX$.
	\end{proposition}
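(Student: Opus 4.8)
The plan is to reduce global optimality of $\bar{x}$ to the single inequality $F(\bar{x}) \le F^*$, where $F^* := \min_{x\in\cX} F(x) = \min_{u\in\cU} H(u) = H(u^*)$ (the middle equality holding because $c$ is a bijection between $\cX$ and $\cU$), and to reach it by exhibiting a feasible curve out of $\bar{x}$ along which $F$ decreases at a first-order rate, so that failure of optimality would violate stationarity. Writing $\bar{u} := c(\bar{x})$ and taking the solution $u^* \in \cU$ of \eqref{problem:reformulation} guaranteed by \ref{C1}, convexity of $\cU$ makes the segment $u_\lambda := (1-\lambda)\bar{u} + \lambda u^*$ feasible for all $\lambda \in [0,1]$, and I set $x_\lambda := c^{-1}(u_\lambda) \in \cX$. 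Convexity of $H$ then yields the first-order descent $F(x_\lambda) - F(\bar{x}) = H(u_\lambda) - H(\bar{u}) \le \lambda\rb{H(u^*) - H(\bar{u})} = -\lambda\rb{F(\bar{x}) - F^*}$.

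First I would convert stationarity of $\bar{x}$ into a quadratic lower bound valid on all of $\cX$. Since $F$ is $\ell$-weakly convex, the function $\psi(x) := F(x) + \tfrac{\ell}{2}\sqnorm{x - \bar{x}}$ is convex, and because the added quadratic has vanishing gradient at $\bar{x}$, adding it does not alter the Fr\'echet subdifferential there; thus $0 \in \partial(F + \delta_{\cX})(\bar{x})$ is equivalent to $0 \in \partial(\psi + \delta_{\cX})(\bar{x})$. As $\psi + \delta_{\cX}$ is convex, this means $\bar{x}$ globally minimizes it, giving $F(x) \ge F(\bar{x}) - \tfrac{\ell}{2}\sqnorm{x - \bar{x}}$ for every $x \in \cX$.

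Next I would control the displacement $x_\lambda - \bar{x}$. By \eqref{eq:lipschitz_c_inverse} (equivalently \ref{C2}), $c^{-1}$ is $\mu_c^{-1}$-Lipschitz, so $\norm{x_\lambda - \bar{x}} = \norm{c^{-1}(u_\lambda) - c^{-1}(\bar{u})} \le \mu_c^{-1}\lambda\norm{u^* - \bar{u}}$, whence $\sqnorm{x_\lambda - \bar{x}} \le \mu_c^{-2}\lambda^2\sqnorm{u^* - \bar{u}}$. Evaluating the quadratic lower bound at $x = x_\lambda$ and combining it with the first-order descent gives $-\lambda\rb{F(\bar{x}) - F^*} \ge F(x_\lambda) - F(\bar{x}) \ge -\tfrac{\ell}{2\mu_c^2}\lambda^2\sqnorm{u^* - \bar{u}}$. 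Dividing by $\lambda > 0$ and letting $\lambda \to 0^+$ leaves $F(\bar{x}) - F^* \le 0$, so $\bar{x}$ is globally optimal.

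The crux, and the place where the hidden-convex structure must be handled carefully, is that one cannot pass from stationarity to optimality by a naive chain rule: writing a stationary subgradient as $\nabla c(\bar{x})^\top s$ with $s \in \partial H(\bar{u})$ only controls the linearization $\nabla c(\bar{x})(x - \bar{x})$, whereas convexity of $H$ requires the genuine increment $c(x) - c(\bar{x})$, and these differ precisely because $c$ is not affine. The curve construction circumvents this by working in $\cU$, where convexity is exact; the nonlinear map enters only through its Lipschitz inverse, which forces the second-order obstruction $\tfrac{\ell}{2}\sqnorm{x_\lambda - \bar{x}} = O(\lambda^2)$ to be negligible against the $O(\lambda)$ descent. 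Differentiability of $c$ at $\bar{x}$ is what lets one identify the limiting descent direction explicitly as $\nabla c(\bar{x})^{-1}(u^* - \bar{u})$ (the Jacobian being invertible since \ref{C2} gives $\norm{\nabla c(\bar{x}) z} \ge \mu_c \norm{z}$), should a directional-derivative phrasing be preferred. I expect the most delicate point to be the reduction in the second paragraph, since it relies on the exact Fr\'echet notion of stationarity for weakly convex functions rather than on any smoothness of $F$ itself.
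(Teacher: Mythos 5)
Your proof is correct, but it takes a genuinely different route from the paper's. The paper argues structurally: it invokes the nonsmooth chain rule (Theorem 10.49 in Rockafellar--Wets) to write $0 \in \partial_x(F+\delta_{\cX})(\bar{x}) = \nabla c(\bar{x})\rb{\partial_u H(\bar{u}) + \partial_u \delta_{\cU}(\bar{u})}$, then uses the fact that \ref{C2} forces the Jacobian $\nabla c(\bar{x})$ to be invertible to transfer stationarity to the $u$-space, where convex optimality conditions for $H+\delta_{\cU}$ finish the argument in two lines. You instead work variationally in the primal space: you pull back the segment $(1-\lambda)\bar{u} + \lambda u^*$ through $c^{-1}$, get $O(\lambda)$ descent from convexity of $H$, convert Fr\'echet stationarity plus $\ell$-weak convexity into the global minorant $F(x) \geq F(\bar{x}) - \tfrac{\ell}{2}\sqnorm{x-\bar{x}}$ (the exact sum rule for a smooth summand with vanishing gradient is indeed valid for the Fr\'echet subdifferential), and use the Lipschitz inverse \eqref{eq:lipschitz_c_inverse} to make the obstruction $O(\lambda^2)$, so dividing by $\lambda$ and letting $\lambda \to 0^+$ yields $F(\bar{x}) \leq F^*$. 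Each approach buys something: the paper's proof is shorter and directly identifies $\bar{u}$ as a minimizer of the reformulation, but it genuinely needs the differentiability of $c$ at $\bar{x}$ (both for the chain rule and for the Jacobian-invertibility citation). Your argument, as written, never actually uses that hypothesis --- only \ref{C1}, \ref{C2}, and weak convexity --- so you have in fact proved a slightly stronger statement with differentiability of $c$ deleted; your curve $x_\lambda$ is moreover exactly the construction of \Cref{prop:HC}, so your proof unifies the optimality result with the machinery driving the algorithmic analysis. (One small caveat: your closing remark that the descent direction is $\nabla c(\bar{x})^{-1}(u^*-\bar{u})$ leans on $\norm{\nabla c(\bar{x})z} \geq \mu_c\norm{z}$, which the paper derives from \ref{C2} only for continuously differentiable $c$ and with some care at boundary points of $\cX$ --- but since that remark is optional to your argument, it does not affect correctness.)
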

	\begin{proof}
		By the definition of a stationary point and the chain rule \cite{rockafellar2009variational} (Theorem 10.49), we can write
		\begin{equation}\label{eq:chain_rule}
		0 \in \partial_x (F + \delta_{\cX})(\bar{x}) = \nabla c(\bar{x}) \rb{ \partial_u H(\bar{u}) + \partial_u \delta_{\cU}(\bar{u}) }, 
		\end{equation}
		where $\bar{u} = c(\bar{x})$. As the map $c(\cdot)$ is invertible {\color{black} with a Lipschitz continuous inverse by \eqref{C2}, then its Jacobian $\nabla c(x)$ is invertible at $\bar x$ (see e.g., Corollary 3.3.\,in \cite{LAWSON2020123913})}. 
  Therefore, \eqref{eq:chain_rule} implies that $0 \in \partial_u H(\bar{u}) + \partial_u \delta_{\cU}(\bar{u})$. Since function $H(\cdot)$ is convex, by the sufficient optimality condition, $\bar{u}$ is a globally optimal solution, i.e., $H(\bar{u}) \leq H(u)$ for any $u \in \cU$. As a result, we have
		$F(\bar{x}) = H(\bar{u}) \leq H(u) = F(x)$  \text{for any } $x \in \cX$.
	\end{proof}	
	Note that a similar result appeared in {\color{black} Theorem 4.2 of} \cite{zhang2020variational} under additional smoothness assumption on $c(\cdot)$. The above proof is much simpler and does not require smoothness. 
 
	\subsection{Connections with gradient dominated functions}
	It is natural to ask what is the connection between hidden convex problems we consider and previously studied gradient dominated functions \cite{absil2005convergence,attouch2009convergence,attouch2010proximal,frankel2015splitting} that can also be used to ensure the global convergence of (sub-)gradient methods. 
   	{\color{black} We will distinguish between the weak \eqref{eq:HC_1KL} and the strong \eqref{eq:HSC_2KL} variants of gradient domination condition. It turns out that hidden convexity implies the weak gradient domination and hidden strong convexity implies its strong version in the sense specified below. 
    

 \begin{proposition}\label{le:HC_KL}
Let $F(\cdot)$ be weakly convex and hidden convex with modulus $\mu_c$ on $\cX$, the map $c(\cdot)$ be continuously differentiable on $\cX$. 

$(i)$ If the set $\cU = c(\cX)$ is bounded with diameter $D_{\cU}$, then
  \begin{equation}\label{eq:HC_1KL}
		\inf_{s_x \in \partial (F + \delta_{\cX})(x)} \norm{ s_x } \geq \frac{\mu_c}{D_{\cU}} \rb{ F(x) -  F^* } \qquad \text{for all } x \in \cX .
  \end{equation}

$(ii)$ If $F(\cdot)$ is  $(\mu_c, \mu_H)$-hidden strongly convex, then 
  \begin{equation}\label{eq:HSC_2KL}
		\inf_{s_x \in \partial (F + \delta_{\cX})(x)} \sqnorm{ s_x } \geq 2 \mu_H \mu_c^2 \rb{F(x) -  F^* } \qquad \text{for all } x \in \cX .
  \end{equation}
 \end{proposition}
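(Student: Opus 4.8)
The plan is to reduce both inequalities to standard convex (sub-)gradient estimates for $H + \delta_{\cU}$ in the $u$-variable and to transport them back to $\cX$ through the Jacobian of $c$. The first ingredient I would establish is the pointwise bound $\norm{\nabla c(x) z} \geq \mu_c \norm{z}$ for every $x \in \cX$ and $z \in \R^d$. For $x$ in the interior this follows from \eqref{assumption:g_inv}: setting $y = x + t z$ for small $t > 0$, dividing $\norm{c(x + t z) - c(x)} \geq \mu_c\, t\, \norm{z}$ by $t$, and letting $t \to 0$ while using $c(x + t z) - c(x) = t\, \nabla c(x) z + o(t)$; continuity of $\nabla c(\cdot)$ then extends the bound to the boundary $\partial \cX$. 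Equivalently, the smallest singular value of $\nabla c(x)$ is at least $\mu_c$, a property preserved under transposition.

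With this in hand I would invoke the same subdifferential chain rule used in the proof of \Cref{prop:global_opt}: writing $u = c(x)$, every $s_x \in \partial (F + \delta_{\cX})(x)$ equals $\nabla c(x)\, s_u$ for some $s_u \in \partial (H + \delta_{\cU})(u)$, and invertibility of $\nabla c(x)$ makes this correspondence a bijection. Combined with the singular-value bound this yields $\norm{s_x} \geq \mu_c \norm{s_u}$, reducing both claims to a lower bound on $\norm{s_u}$ in terms of $F(x) - F^* = H(u) - H(u^*)$.

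For part $(i)$, with $H$ merely convex, I would apply the subgradient inequality of $H + \delta_{\cU}$ at $u$ tested against $u^*$, giving $F^* = H(u^*) \geq H(u) + \ve{s_u}{u^* - u}$, hence $F(x) - F^* \leq \ve{s_u}{u - u^*} \leq \norm{s_u}\, D_{\cU}$ by Cauchy--Schwarz and the diameter bound. Thus $\norm{s_u} \geq (F(x) - F^*)/D_{\cU}$, and $\norm{s_x} \geq \mu_c \norm{s_u}$ together with the infimum over $s_x$ delivers \eqref{eq:HC_1KL}. For part $(ii)$, with $H$ being $\mu_H$-strongly convex, $H + \delta_{\cU}$ is $\mu_H$-strongly convex, so $(H + \delta_{\cU})(v) \geq H(u) + \ve{s_u}{v - u} + \tfrac{\mu_H}{2}\sqnorm{v - u}$ for all $v \in \R^d$. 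Minimizing the right-hand side over $v$ (the minimizer is $v = u - s_u/\mu_H$) while noting the left-hand side is bounded below by $F^*$ produces the Polyak--{\L}ojasiewicz estimate $\sqnorm{s_u} \geq 2\mu_H (H(u) - F^*) = 2\mu_H (F(x) - F^*)$; squaring $\norm{s_x} \geq \mu_c \norm{s_u}$ then gives \eqref{eq:HSC_2KL}.

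The convex-analytic steps are routine, so the main obstacle is the first step. Concretely, I must justify the Jacobian lower bound uniformly up to the boundary (handled by continuity of $\nabla c$) and, more delicately, rigorously apply the nonsmooth chain rule $\partial (F + \delta_{\cX})(x) = \nabla c(x)\, \partial (H + \delta_{\cU})(c(x))$. This relies on invertibility of $\nabla c(x)$ --- itself a consequence of $\mu_c > 0$ via the singular-value bound --- together with a chain rule valid for composing the convex $H + \delta_{\cU}$ with the smooth, locally invertible map $c$ (as in \cite{rockafellar2009variational}, Theorem 10.49), and on the sum rule separating $\partial_u H$ from $\partial_u \delta_{\cU}$.
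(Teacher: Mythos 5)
Your proposal is correct, and its skeleton coincides with the paper's: both transport convex subgradient estimates for $H+\delta_{\cU}$ back to $\cX$ through the chain rule $s_x = \nabla c(x)\, s_u$, use the Cauchy--Schwarz/diameter bound for part $(i)$ and the strong-convexity P{\L} inequality $\sqnorm{s_u}\geq 2\mu_H(H(u)-H(u^*))$ for part $(ii)$; your derivation of $\norm{\nabla c(x)z}\geq \mu_c\norm{z}$ (difference quotient in the interior, continuity of $\nabla c$ at the boundary) is the same as the paper's, which merely phrases the interior step via an integral of the Jacobian along the segment rather than a first-order expansion. The one genuine divergence is in part $(i)$: the paper first upgrades \eqref{assumption:g_inv} to continuous differentiability of the inverse map, arguing that Lipschitz invertibility forces $\nabla c(x)$ to be invertible, that $\norm{(\nabla c(x))^{-1}}_{\text{op}}$ is bounded on the compact set $\cX$, and then invoking Hadamard's global inverse function theorem to get \eqref{eq:C2_sufficient}, $\norm{\nabla c^{-1}(u)}_{\text{op}}\leq 1/\mu_c$, after which it writes $s_u = \nabla c^{-1}(u)\, s_x$. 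You instead reuse the forward-Jacobian lower bound together with the observation that the smallest singular value of a square matrix is invariant under transposition, obtaining $\norm{s_x}\geq \mu_c\norm{s_u}$ directly. Your route is more elementary and more self-contained --- it needs no global inverse function theorem and no compactness of $\cX$ (the statement only assumes $\cU$ bounded), and it unifies the two parts under a single transport lemma --- while the paper's detour buys the standalone fact that $c^{-1}$ is continuously differentiable with \eqref{eq:C2_sufficient}, which it wants anyway as a verifiable criterion for \ref{C2}. Two minor remarks: the bijectivity of the correspondence $s_x \leftrightarrow s_u$ that you assert is not needed (the set equality from the chain rule suffices, and, as you note, both you and the paper lean on the same qualification hypotheses of Theorem 10.49 in \cite{rockafellar2009variational}); and your squared inequality $\sqnorm{s_x}\geq \mu_c^2\sqnorm{s_u}$ is stated correctly, whereas the paper's displayed chain in part $(ii)$ contains a typo ($\mu_c$ in place of $\mu_c^2$ in the intermediate step) that your version silently repairs.
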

    
}
\begin{proof}
{\color{black} 
$(i)$ Since the map $c(\cdot)$ is invertible with a Lipschitz continuous inverse by \eqref{C2}, its Jacobian $\nabla c(x)$ is invertible for all $x\in \cX$ (Corollary 3.3.\,in \cite{LAWSON2020123913}). Thus, $\norm{(\nabla c(x))^{-1}}_{\text{op}}$ is bounded on a compact set $\cX$ and we can apply a global inverse function theorem due to Hadamard, see e.g., Theorem 1 in \cite{ivanov2023hadamard}. It implies that the inverse map $c^{-1}(\cdot)$ is also continuously differentiable on $\cU$. Therefore, we can show that condition \eqref{eq:C2_sufficient} holds, i.e., $\norm{\nabla c^{-1}(u) }_{\text{op}} \leq 1/\mu_c$ for all $u\in \cU$. By the sub-gradient inequality applied to $H + \delta_{\cU}$, we have for any $x \in \cX$, $x^* \in \cX^*$, $u = c(x)$, $u^* = c(x^*)$ and any $s_u \in \partial (H + \delta_{\cU})(u)$ that
\begin{eqnarray*}
F(x) - F(x^*) & = & H(u) - H(u^*) \\
&\leq& \langle s_u , u - u^* \rangle \\ 
&\leq& \| s_u \| \cdot \| u - u^* \| \\
&=& \| \nabla c^{-1}(u) \, s_x \| \cdot \| u - u^* \| \\
&\leq& \| \nabla c^{-1}(u) \|_{\text{op}} \cdot \|  s_x  \| \cdot \| u - u^* \| \\
&\leq& \frac{D_{\mathcal U}}{\mu_{c}} \| s_x \| ,
\end{eqnarray*}
where the first inequality uses convexity of $H$, the second equality uses the chain rule with $s_x \in \partial (F + \delta_{\cX})(x) $. Taking the infimum over $s_x$ concludes the proof of the first statement. 

$(ii)$ First, we will show that \ref{C2} implies $\norm{\nabla c(x) \, z} \geq \mu_c \norm{z}$ for any $z\in\R^d$. Let $x\in \operatorname{int} \cX$, then there exists $\alpha > 0$ such that for any $\tau \in [0, \alpha]$ the point $x+\tau z\in \cX$. Thus, we can write 
$$\left(\int_0^{\alpha} \nabla c(x + \tau z) d \tau \right)z = c(x+\alpha z) - c(x). $$
Using \eqref{assumption:g_inv}, we have
$$\left\| \left(\int_0^{\alpha} \nabla c(x + \tau z) d \tau \right)z \right\| = \|c(x+\alpha z) - c(x)\|\geq\mu_{c}\alpha\|z\|.$$
Dividing by $\alpha > 0$ and taking the limit $\alpha \rightarrow 0$, we arrive at $\|\nabla c(x) z \| \geq \mu_{c}\|z\|$ for all $z\in \mathbb R^d$. If $x\in \partial \cX$, then there exists a sequence $\left\{x_t\right\}_{t\geq 0} \subset \operatorname{int} \cX$ such that $\lim_{t\rightarrow \infty} x_t =  x$. By continuity of $\|\nabla c(x) z \|$ in $x$, we have $\|\nabla c(x) z \| = \lim_{t\rightarrow\infty} \|\nabla c(x_t) z \| \geq \mu_{c}\|z\|$.

Next, by strong convexity of $H+ \delta_{\cU}$, we have for any $s_u \in \partial (H + \delta_{\cU})(u)$ that 
$$
H(v) \geq H(u) + \langle s_u , v - u \rangle + \frac{\mu_H}{2} \sqnorm{v - u} \qquad \text{for any } u, v \in \cU . 
$$

Minimizing both sides over $v
\in \cU$, we get 
\begin{equation}\label{eq:KL}
 \sqnorm{s_u} \geq 2 \mu_H \rb{ H(u) - H(u^*) }.
\end{equation}

Finally, using the chain rule, we have for any $u = c(x)$
	\begin{eqnarray*} 
		 \sqnorm{s_x} & = & \sqnorm{ \nabla c(x)  \, s_u  } \geq \mu_c \sqnorm{ s_u  } 
			\geq 2 \mu_H \mu_c^{2} \rb{F(x) - F(x^*) },
	\end{eqnarray*}
where in the last inequalities, we used the fact that $\norm{\nabla c(x) \, z} \geq \mu_c \norm{z}$ for any $z \in \R^d $, and \eqref{eq:KL}. It only remains to take the infimum over $s_x$. 
}
 \end{proof}

{\color{black} 
In certain special cases, one can establish convergence rates of gradient methods using such gradient dominance conditions. In particular, if there is no noise and the sequence $\{x^t\}_{t\geq 0}$ satisfies a (strong) descent property, the results from \cite{absil2005convergence} indicate global convergence rates. However, given that in general the sub-gradient method does not have a descent property, this implication seems limiting even in the deterministic case. When $F$ is additionally smooth, existing theory of Projected SGD under gradient dominated conditions is limited either to unconstrained case \cite{fontaine2021convergence,KL_PAGER_Fatkhullin} with the strong gradient dominance condition \eqref{eq:HSC_2KL} \cite{karimi2016linear,li2018simple}, or requires to replace \eqref{eq:HC_1KL} with a weak Proximal-P{\L} inequality \cite{csiba2017global}, which is different from \eqref{eq:HC_1KL}. Unfortunately, the precise connection between \eqref{eq:HC_1KL} and weak Proximal-P{\L} appears difficult to establish and is beyond the scope of the current work. 

To circumvent the above mentioned technical difficulties, in what follows, we will not use the result of \Cref{le:HC_KL}, but develop a direct convergence proof utilizing hidden convexity. Our proof technique allows to handle non-smooth, stochastic and constrained cases automatically without selecting a proper variant of P{\L} condition (based on algorithm) and attempting to verify it.

}

	\subsection{Key inequalities for analysis of gradient methods}
	\label{subsec:key_ineq}
	The following observations are key tools for deriving global convergence under hidden convexity.
	\begin{proposition}\label{prop:HC}
		Let $F(\cdot)$ be hidden convex with $\mu_c>0, \mu_H\geq0$. For any $\alpha \in [0,1]$, $x^* \in \cX^*$ and $x\in \cX$ , define $x_{\alpha} := c^{-1} \rb{(1-\alpha) c(x) + \alpha c(x^*)}$. Then
		\begin{eqnarray}\label{prop:F_x_alpha_strong_conv}
			F(x_{\alpha}) \leq  (1- \alpha) F(x) + \alpha  F(x^*)  -   \fr{(1-\alpha) \alpha  \mu_H}{2}  \sqpnorm{c(x) - c(x^*) } ,
		\end{eqnarray}	\begin{eqnarray}\label{prop:x_alpha_x_bound}
			\pnorm{ x_{\alpha}-x } \leq \fr{\alpha }{\mu_c} \pnorm{c(x) - c(x^*) } .
		\end{eqnarray}
		
	\end{proposition}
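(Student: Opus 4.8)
The plan is to reduce everything to the reformulated variable by a change of coordinates, exploiting that $F = H \circ c$ with $H$ (strongly) convex on the convex set $\cU = c(\cX)$. Concretely, I would set $u := c(x)$ and $u^* := c(x^*)$, both of which lie in $\cU$ since $x, x^* \in \cX$, and write $u_{\alpha} := (1-\alpha) u + \alpha u^*$, so that by definition $x_{\alpha} = c^{-1}(u_{\alpha})$. The first thing to observe is that $x_{\alpha}$ is well-defined: because $\cU$ is convex by Condition \ref{C1}, the convex combination $u_{\alpha}$ belongs to $\cU = c(\cX)$, hence $c^{-1}(u_{\alpha})$ exists and lies in $\cX$. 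This is the only genuine well-definedness point to check; once it is in place, both claims follow from the defining properties of $H$ and $c$.

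For the function-value inequality \eqref{prop:F_x_alpha_strong_conv}, I would simply compute $F(x_{\alpha}) = H(c(x_{\alpha})) = H(u_{\alpha}) = H((1-\alpha) u + \alpha u^*)$ and apply the (strong) convexity bound \eqref{assumption:H_conv} with $\lambda = \alpha$, $u$, and $u^*$. This gives
\[
H((1-\alpha) u + \alpha u^*) \leq (1-\alpha) H(u) + \alpha H(u^*) - \fr{(1-\alpha)\alpha\mu_H}{2}\sqpnorm{u - u^*}.
\]
Substituting back $H(u) = F(x)$, $H(u^*) = F(x^*)$, and $\pnorm{u - u^*} = \pnorm{c(x) - c(x^*)}$ recovers \eqref{prop:F_x_alpha_strong_conv} verbatim. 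Note this works uniformly whether $\mu_H = 0$ (plain convexity) or $\mu_H > 0$.

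For the distance bound \eqref{prop:x_alpha_x_bound}, I would write $\pnorm{x_{\alpha} - x} = \pnorm{c^{-1}(u_{\alpha}) - c^{-1}(u)}$ and invoke the $1/\mu_c$-Lipschitz continuity of $c^{-1}$, i.e.\ \eqref{eq:lipschitz_c_inverse}, which is exactly equivalent to Condition \ref{C2}. This yields $\pnorm{x_{\alpha} - x} \leq \mu_c^{-1}\pnorm{u_{\alpha} - u}$, and since $u_{\alpha} - u = \alpha(u^* - u) = \alpha(c(x^*) - c(x))$, we get $\pnorm{u_{\alpha} - u} = \alpha\pnorm{c(x) - c(x^*)}$, which completes the bound. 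In short, there is no serious obstacle here: the substance of the result lies entirely in the hidden-convex setup, and the proof is a mechanical transfer of convexity and Lipschitz-inverse facts from the $u$-space back to the $x$-space. The only item worth stating carefully is the well-definedness of $x_{\alpha}$ via convexity of $\cU$, since the remaining steps are direct applications of \eqref{assumption:H_conv} and \eqref{eq:lipschitz_c_inverse}.
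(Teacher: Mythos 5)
Your proposal is correct and follows essentially the same route as the paper's own proof: substitute $u = c(x)$, $u^* = c(x^*)$, use convexity of $\cU$ (Condition \ref{C1}) to justify that $x_\alpha$ is well-defined, apply the (strong) convexity inequality \eqref{assumption:H_conv} for the function-value bound, and use \eqref{assumption:g_inv} (equivalently, the $1/\mu_c$-Lipschitz continuity of $c^{-1}$) for the distance bound. Your explicit remark on well-definedness is a welcome touch of care that the paper only handles implicitly in passing.
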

	
	\begin{proof}
		By the (strong) convexity of $H(\cdot)$ and the convexity of $\cU$, we have
		\begin{eqnarray*}
			F(x_{\alpha}) &=& F( c^{-1} \rb{(1-\alpha) c(x) + \alpha c(x^*)} ) \\
			& = & H( (1-\alpha) c(x) + \alpha c(x^*) ) \\ 
			& \leq  &  (1-\alpha) H( c(x) ) + \alpha H( c(x^*) ) -   \fr{(1-\alpha) \alpha  \mu_H}{2} \sqpnorm{c(x) - c(x^*) }\\
			& =  &  (1- \alpha) F(x)  +  \alpha F(x^*) -  \fr{(1-\alpha)  \alpha   \mu_H}{2} \sqpnorm{c(x) - c(x^*) }.
		\end{eqnarray*}
  where the inequality uses the fact that $\cU$ is a convex set and that $(1-\alpha) c(x) + \alpha c(x^*) \in \cU$ for any $x \in \cX$. By definition of $x_{\alpha}$ and \eqref{assumption:g_inv}, we derive
		$$
		\pnorm{ x_{\alpha}-x } = \pnorm{ c^{-1} \rb{(1-\alpha) c(x) + \alpha c(x^*)} - c^{-1}(c(x)) } \leq \fr{1}{\mu_c} \pnorm{\alpha(c(x) - c(x^*)) } .
		$$
		\end{proof}

	\section{Stochastic Subgradient Method}\label{sec:subgradient_method}

  In this section, we show how \Cref{prop:HC} can be used to analyze convergence of the projected stochastic subgradient method (SM) as described in \eqref{eq:subgradient_method} in the non-smooth setting.

We first make the following assumptions.
 
	\begin{enumerate}
		\myitem{A.1}\label{A1} $F(\cdot)$ is $\ell$-weakly convex on a closed, convex set $\cX$. 
		\myitem{A.2}\label{A2} We have access to a stochastic sub-gradient oracle of $F(\cdot)$ at any $x\in\cX$, which outputs a random vector $g(x, \xi)$ such that $\Exp{ g(x, \xi) } \in \partial F(x),$ where $\partial F(x)$ is the sub-differential set of $F(\cdot)$ at $x$. Moreover, there exists $G_F > 0$ 
		$$
		\Exp{ \sqnorm{ g(x, \xi)} } \leq G_F^2  \qquad \text{for any } x \in \cX .
		$$
	\end{enumerate}	

 The above assumptions are standard and appear frequently in non-smooth optimization \cite{davis2019proximally,zhang_he_2018_msgd,davis2018stochastic_high_order_growth,davis2019stochastic}. 
 Weak convexity is known to be a much weaker condition than smoothness \cite{davis2019stochastic}.  
 Notably, in the context of our hidden convexity (C.1. and C.2.), weak convexity is not restrictive and comes for free from the Lipschitz continuity of $H(\cdot)$ and the smoothness of the transformation function $c(\cdot)$. Specifically, if $H: \cU \rightarrow \R$ is convex and $G_H$-Lipschitz continuous on $\cU$ and $c: \cX \rightarrow \cU$ is $L_c$-smooth, then it can be shown that the composition $F(x) = H(c(x))$ is $\ell$-weakly convex with $\ell := G_H L_c$; see e.g. Proposition 2.2(c) in \cite{zhang_he_2018_msgd}. 
 In the absence of smoothness, the second assumption on bounded second moment of the (stochastic) sub-gradients is typical even in convex case. Later in Section~\ref{sec:PSGD}, we show this assumption can be further relaxed to bounded variance in the smooth setting.

  Let $x^* \in \cX^*$, and $\Phi := F + \delta_{\cX}$. We define the Lyapunov function
	$$\Lambda_t := \Exp{ \Phi_{1/\rho}(x^t) - F(x^*) } , $$
	where $\Phi_{1/\rho}$ is the Moreau envelope of $\Phi$. Notice that $\Lambda_t \geq 0$ for any $t\geq 0$ and $\Lambda_t = 0$ if and only if $x^t \in \cX^*$. 

 Before stating the main result, we recall the following useful lemma from~\cite{davis2019stochastic} that controls the distance between one step of the SM, $x^{t+1}$, and one step of the proximal point method, $\hat x^t$. We include its proof in~\Cref{sec:technical_lemma} for completeness. 
 	
\begin{lemma}[Lemma 3.3 in \cite{davis2019stochastic}]\label{le:prox_grad_prox_bound_nonsmooth}
	Let \ref{A1}, \ref{A2} hold, and $\rho = 2 \ell$, $\stepsize \leq 1/\rho$ . Then for all $t \geq 0$, we have 
	$
    \mathbb{E}\left[ \sqnorm{ x^{t+1}-\hat{x}^t } \mid x^t\right] \leq(1-\stepsize \rho)\sqnorm{x^t-\hat{x}^t } + 4 G_F^2 \stepsize^2 .
	$
\end{lemma}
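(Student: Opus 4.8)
The plan is to realize one step of SM as a projection, to use that the proximal point $\hat{x}^t=\operatorname{prox}_{\Phi/\rho}(x^t)$ is feasible so that projection is nonexpansive, to expand the resulting square and pass to conditional expectation, and then to control the cross term by playing the $\ell$-weak convexity of $F$ against the strong convexity of the proximal subproblem. The specific choice $\rho = 2\ell$ is what converts the resulting estimate into a genuine contraction.

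First I would set $g^t := g(x^t,\xi^t)$ and note that $\hat{x}^t = \operatorname{prox}_{\Phi/\rho}(x^t) \in \cX$, since $\Phi = F + \delta_{\cX}$ forces the proximal point into the feasible set; hence $\hat{x}^t = \Pi_{\cX}(\hat{x}^t)$. Using the nonexpansiveness of the Euclidean projection,
$$\sqnorm{x^{t+1} - \hat{x}^t} = \sqnorm{\Pi_{\cX}\rb{x^t - \stepsize g^t} - \Pi_{\cX}(\hat{x}^t)} \leq \sqnorm{x^t - \stepsize g^t - \hat{x}^t} = \sqnorm{x^t - \hat{x}^t} - 2\stepsize \ve{g^t}{x^t - \hat{x}^t} + \stepsize^2 \sqnorm{g^t}.$$
Taking $\Exp{\cdot \mid x^t}$ and writing $g := \Exp{g^t \mid x^t} \in \partial F(x^t)$ from \ref{A2}, the noise term is bounded by $\stepsize^2 \Exp{\sqnorm{g^t}\mid x^t} \leq \stepsize^2 G_F^2$ using \ref{A2}; it remains to estimate $-2\stepsize\ve{g}{x^t - \hat{x}^t}$.

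The heart of the argument is this cross term. By $\ell$-weak convexity of $F$ (\ref{A1}) applied with subgradient $g\in\partial F(x^t)$, we have $\ve{g}{\hat{x}^t - x^t} \leq F(\hat{x}^t) - F(x^t) + \fr{\ell}{2}\sqnorm{\hat{x}^t - x^t}$. On the other hand, since $\rho = 2\ell > \ell$, the map $y \mapsto \Phi(y) + \fr{\rho}{2}\sqnorm{y - x^t}$ is $(\rho-\ell)$-strongly convex and is minimized at $\hat{x}^t$; comparing its values at $y=x^t$ and $y=\hat{x}^t$ gives $F(\hat{x}^t) - F(x^t) \leq -\fr{\rho}{2}\sqnorm{\hat{x}^t - x^t} - \fr{\rho - \ell}{2}\sqnorm{x^t - \hat{x}^t}$. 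Substituting the second estimate into the first yields $\ve{g}{\hat{x}^t - x^t} \leq (\ell - \rho)\sqnorm{x^t - \hat{x}^t}$, hence $-2\stepsize\ve{g}{x^t - \hat{x}^t} = 2\stepsize\ve{g}{\hat{x}^t - x^t} \leq 2\stepsize(\ell - \rho)\sqnorm{x^t - \hat{x}^t}$.

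Finally, plugging $\rho = 2\ell$ gives $2\stepsize(\ell - \rho) = -\stepsize\rho$, and collecting terms produces $\Exp{\sqnorm{x^{t+1} - \hat{x}^t}\mid x^t} \leq (1-\stepsize\rho)\sqnorm{x^t - \hat{x}^t} + \stepsize^2 G_F^2$, which is the claimed bound (the looser constant $4$ in the cited statement can be traced to the general model-based framework of \cite{davis2019stochastic}, whereas the direct subgradient argument above is tight with constant one). The main obstacle is precisely the cross-term estimate: one must simultaneously spend two ``convexity budgets'' — the $+\fr{\ell}{2}$ defect from weak convexity of $F$ and the negative quadratics from strong convexity of the proximal subproblem — and it is the calibration $\rho = 2\ell$ that makes the negative terms dominate the defect, producing a contraction factor $1 - \stepsize\rho \in (0,1)$ for $\stepsize \leq 1/\rho$.
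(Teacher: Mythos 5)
Your proof is correct, and it reaches the stated bound by a genuinely different (and slightly sharper) route than the paper. The paper's proof goes through \Cref{le:prox_grad_prox_relation}: it represents the proximal point itself as a projection, $\hat{x}^t = \Pi_{\cX}(z^t)$ with $z^t = \stepsize\rho x^t - \stepsize \hat{g}^t + (1-\stepsize\rho)\hat{x}^t$ for some $\hat{g}^t \in \partial F(\hat{x}^t)$, applies nonexpansiveness to the two pre-projection points, and controls the cross term by hypomonotonicity of the subdifferential, $\ve{g^t - \hat{g}^t}{x^t - \hat{x}^t} \geq -\ell \sqnorm{x^t - \hat{x}^t}$; the constant $4$ arises from Young's inequality applied to $\sqnorm{g(x^t,\xi^t) - \hat{g}^t}$, which requires bounding the subgradient at $\hat{x}^t$ as well. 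You instead use only the trivial identity $\hat{x}^t = \Pi_{\cX}(\hat{x}^t)$, so a single subgradient (the mean oracle output at $x^t$) enters, and you control the cross term through function values: the weak-convexity subgradient inequality at $x^t$ played against the $(\rho-\ell)$-strong convexity of the proximal subproblem evaluated at $y = x^t$. Both calibrations pivot on $\rho = 2\ell$, but your one-point comparison yields the tighter noise term $\stepsize^2 G_F^2$ and avoids invoking a bound on $\norm{\hat{g}^t}$ (which the paper implicitly extracts from \ref{A2} via Jensen's inequality). The trade-off is that the paper's two-point, fixed-point representation is exactly what lets the smooth analogue (\Cref{le:prox_grad_prox_bound}) replace the second moment $G_F^2$ by the variance $\sigma^2$: there, $\nabla F(\hat{x}^t)$ cancels inside the norm and only Lipschitz-controlled differences $\nabla F(x^t) - \nabla F(\hat{x}^t)$ plus pure noise remain, whereas your expansion leaves a raw $\stepsize^2\,\Exp{\sqnorm{g^t}}$ term — perfectly adequate here under \ref{A2}, but it would not directly deliver the variance-only refinement used in \Cref{sec:PSGD}.
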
	

 The next theorem is the essential step for establishing the global convergence of SM  in \cref{thm:subgrad_C,thm:subgrad_SC}.
	
	\begin{theorem}\label{le:descent_like_subgrad}
		Let \ref{C1}, \ref{C2}, \ref{A1} and \ref{A2} hold with $\mu_H\geq 0$. Set $\rho = 2 \ell$, $\stepsize \leq \frac{1}{2\ell}$. Define 
  $\hat x^t := \operatorname{prox}_{ \Phi / \rho }(x^t)$. Then for any $0 < \alpha \leq \stepsize  \ell  $ and $t\geq 0$
		\begin{eqnarray*}
			 \Lambda_{t+1}  &\leq&  (1- \alpha) \Lambda_t   + \rb{  \fr{ 3 \alpha^{2} }{ 2 \mu_c^2 \stepsize } -  \fr{(1-\alpha)\alpha  \mu_H}{2} } \Exp{\sqnorm{ c(\hat{x}^t) - c(x^*) } } + 8 \ell  \stepsize^2 G_F^2 . 
		\end{eqnarray*}
	\end{theorem}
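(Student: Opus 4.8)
The plan is to run the standard Moreau-envelope argument for the stochastic subgradient method, but to replace the usual comparison point $\hat x^t$ by a point that strictly moves toward the optimum in the reformulated space, so that hidden convexity (\Cref{prop:HC}) turns a ``descent toward stationarity'' into a geometric contraction of $\Lambda_t$ toward $F^*$. Concretely, for the given $\alpha$ I would introduce the interpolation point $x_\alpha^t := c^{-1}\big((1-\alpha)c(\hat x^t) + \alpha c(x^*)\big)$, which lies in $\cX$ since its argument lies in the convex set $\cU$ (\ref{C1}), and note that $\hat x^t\in\cX$ gives $\Phi(\hat x^t)=F(\hat x^t)$ and $\Phi_{1/\rho}(x^t) = F(\hat x^t) + \tfrac{\rho}{2}\|x^t-\hat x^t\|^2$.

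First I would use the definition of the Moreau envelope, evaluating the inner minimization at $y = x_\alpha^t$ rather than at its minimizer, to get $\Phi_{1/\rho}(x^{t+1}) \le F(x_\alpha^t) + \tfrac{\rho}{2}\|x_\alpha^t - x^{t+1}\|^2$. Applying \eqref{prop:F_x_alpha_strong_conv} with $x=\hat x^t$ bounds $F(x_\alpha^t)$ by $(1-\alpha)F(\hat x^t) + \alpha F(x^*) - \tfrac{(1-\alpha)\alpha\mu_H}{2}\|c(\hat x^t) - c(x^*)\|^2$, which already produces the $(1-\alpha)$ weighting on $F(\hat x^t)$ and the strongly-convex penalty appearing in the statement.

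The main work is the quadratic term $\tfrac{\rho}{2}\|x_\alpha^t - x^{t+1}\|^2$. I would split $x_\alpha^t - x^{t+1} = (\hat x^t - x^{t+1}) + (x_\alpha^t - \hat x^t)$ and apply Young's inequality with a parameter $\theta>0$, giving the bound $(1+\theta)\|\hat x^t - x^{t+1}\|^2 + (1+\tfrac1\theta)\|x_\alpha^t - \hat x^t\|^2$. The second summand is controlled by \eqref{prop:x_alpha_x_bound}, namely $\|x_\alpha^t - \hat x^t\|^2 \le \tfrac{\alpha^2}{\mu_c^2}\|c(\hat x^t) - c(x^*)\|^2$, which is exactly the source of the $\mu_c^{-2}$ factor. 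Taking conditional expectation, the first summand is handled by \Cref{le:prox_grad_prox_bound_nonsmooth}, which contracts it into $(1-\eta\rho)\|x^t - \hat x^t\|^2 + 4G_F^2\eta^2$. Combining with $\Phi_{1/\rho}(x^t) = F(\hat x^t) + \tfrac{\rho}{2}\|x^t - \hat x^t\|^2$, subtracting $F(x^*)$, and taking total expectations assembles the recursion $\Lambda_{t+1} \le (1-\alpha)\Lambda_t + \dots$

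The crux, and the step I expect to be most delicate, is the choice of $\theta$: it must simultaneously (i) force the contraction factor $(1+\theta)(1-\eta\rho)$ below $1-\alpha$, which pushes $\theta$ small, and (ii) keep the coefficient $\tfrac{\rho}{2}(1+\tfrac1\theta)\tfrac{\alpha^2}{\mu_c^2}$ below the target $\tfrac{3\alpha^2}{2\mu_c^2\eta}$, which pushes $\theta$ large. I would take $\theta = \eta\rho - \alpha$; the hypotheses $\alpha \le \eta\ell = \tfrac{\eta\rho}{2}$ and $\eta \le \tfrac{1}{2\ell} = \tfrac1\rho$ then give $\theta \ge \tfrac{\eta\rho}{2}$ and $\eta\rho \le 1$. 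These yield $(1+\theta)(1-\eta\rho)\le 1-\alpha$ (verified from $\theta \le \tfrac{\eta\rho-\alpha}{1-\eta\rho}$) and $\eta\rho(1+\tfrac1\theta)\le \eta\rho + 2 \le 3$, so that $\tfrac{\rho}{2}(1+\tfrac1\theta)\tfrac{\alpha^2}{\mu_c^2} = \tfrac{\alpha^2}{2\mu_c^2\eta}\,\eta\rho(1+\tfrac1\theta)\le \tfrac{3\alpha^2}{2\mu_c^2\eta}$; the same bound $1+\theta \le 2$ collapses the noise term to $\tfrac{\rho}{2}(1+\theta)\,4G_F^2\eta^2 \le 8\ell\eta^2 G_F^2$, matching the claimed constants exactly.
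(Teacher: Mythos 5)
Your proposal is correct and takes essentially the same route as the paper's proof: the same comparison point $x_\alpha^t = c^{-1}\big((1-\alpha)c(\hat x^t) + \alpha c(x^*)\big)$ inside the Moreau-envelope bound, the same appeals to \Cref{prop:HC} and \Cref{le:prox_grad_prox_bound_nonsmooth}, and the same Young-inequality splitting of $\|x_\alpha^t - x^{t+1}\|^2$. The only difference is cosmetic --- you choose the Young parameter $\theta = \eta\rho - \alpha$ where the paper chooses $s = \eta\rho/2$ --- and both choices, under $\alpha \le \eta\ell$ and $\eta\rho \le 1$, verify the same contraction $(1+\theta)(1-\eta\rho) \le 1-\alpha$ and reproduce exactly the stated constants $\frac{3\alpha^2}{2\mu_c^2\eta}$ and $8\ell\eta^2 G_F^2$.
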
 
	\begin{proof}
		By the definition of $\hat x^{t+1}$, we have for any $z \in \cX$
		$$
		\begin{aligned}
			\mathbb{E}\left[\Phi_{1/\rho} \left(x^{t+1}\right)\right] & =\mathbb{E} \left[\Phi\left(\hat{x}^{t+1}\right)+\frac{\rho}{2} \sqnorm{ \hat{x}^{t+1}-x^{t+1}} \right] \\
			& \overset{(i)}{\leq} \mathbb{E}\left[\Phi\left(z \right) + \frac{\rho}{2}\sqnorm{z  -x^{t+1}}\right] \\
			& \overset{(ii)}{\leq} \mathbb{E}\left[\Phi\left(z \right) +  \rb{1 + s } \frac{\rho}{2}\sqnorm{\hat x^{t} -x^{t+1}} + \rb{1 + \frac{1}{s} } \frac{\rho}{2}\sqnorm{\hat x^{t} -z }\right] \\
			& \overset{(iii)}{\leq} \mathbb{E}\left[\Phi\left( z \right) + \rb{1 + s } (1-\stepsize \rho) \frac{\rho}{2}\sqnorm{ \hat{x}^t-x^t }\right] \\
			& \qquad + \rb{1 + \frac{1}{s} } \frac{\rho}{2}\Exp{\sqnorm{\hat x^{t} - z }} + 2 \rb{1 + s } \rho \stepsize^2 G_F^2 , 
		\end{aligned}
		$$
		where in $(i)$ we use the optimality of $\hat x^{t+1} $, $(ii)$ follows from Young's inequality for any $s> 0$, and in $(iii)$ we apply the result of \cref{le:prox_grad_prox_bound_nonsmooth}.
		We now select $s = \stepsize \rho / 2$, which guarantees $\rb{1 + s } (1-\stepsize \rho) \leq 1 - \stepsize \rho / 2$, $1+s \leq 2$, and $1+1/s \leq 3 / (\stepsize \rho) $. Thus 
		\begin{eqnarray*}
			\mathbb{E}\left[\Phi_{1/\rho}\left(x^{t+1}\right)\right] &\leq& \mathbb{E}\left[F\left( z \right) +  \rb{1 - \frac{\stepsize \rho}{2} } \frac{\rho}{2} \sqnorm{\hat{x}^t-x^t } \right] + \frac{3}{2 \stepsize }\Exp{\sqnorm{\hat x^{t} - z }} + 4 \rho \stepsize^2 G_F^2 . 
		\end{eqnarray*}
		
		We are now ready to utilize the properties of hidden convex functions to bound $F(z)$ and $\sqnorm{ \hat{x}^t - z }$ for some specific choice of $z \in \cX$. 
		By~\Cref{prop:HC}, we have for $z = \hat x_{\alpha}^t := c^{-1}((1-\alpha) c(\hat x^t) + \alpha c(x^*))$ 
		$$
		F(
		z
  ) \leq (1- \alpha) F(\hat x^t) +  \alpha F(x^*) -   \fr{(1-\alpha)\alpha  \mu_H}{2} \sqnorm{ c(\hat x^t) - c(x^*) } ,
		$$
		$$
		\sqnorm{z
  - \hat x^t } \leq \fr{ \alpha^{2} }{ \mu_c^2  } \sqnorm{ c(\hat x^t) - c(x^*) }.
		$$
		Combining three inequalities  above, we have 
		\begin{eqnarray*}
			\Exp{ \Phi_{1/\rho}(x^{t+1}) }  &\leq&  (1- \alpha) \Exp{ F(\hat x^t) } +  \alpha F(x^*) + \rb{1 - \frac{\stepsize \rho}{2} } \frac{\rho}{2} \Exp{ \sqnorm{\hat{x}^t-x^t } }  + 4 \rho \stepsize^2 G_F^2 \\
			&& \qquad + \rb{  \fr{ 3 \alpha^{2} }{ 2 \mu_c^2 \stepsize } -  \fr{(1-\alpha)\alpha  \mu_H}{2} } \Exp{\sqnorm{ c(\hat x^t) - c(x^*) } } \\
   &\leq&  (1- \alpha) \Exp{ \Phi_{1/\rho}(x^t) } +  \alpha F(x^*)  + 4 \rho \stepsize^2 G_F^2 \\
			&& \qquad + \rb{  \fr{ 3 \alpha^{2} }{ 2 \mu_c^2 \stepsize } -  \fr{(1-\alpha)\alpha  \mu_H}{2} } \Exp{\sqnorm{ c(\hat x^t) - c(x^*) } } , 
		\end{eqnarray*}
  where the last inequality holds since $1-\frac{\stepsize\rho}{2} \leq 1-\alpha $ (by the choice $\alpha \leq \stepsize \ell$, $\rho = 2 \ell$) and recognizing $\Phi_{1/\rho}(x^t)$. Subtracting $F(x^*)$ from both sides, we conclude the proof. 
	\end{proof}
	
	\subsection{Hidden Convex Setting}
 We first demonstrate the convergence rate of SM in the hidden convex setting. 
\begin{tcolorbox}[colback=gray!5!white,colframe=gray!75!black]
	\begin{theorem}\label{thm:subgrad_C}
		Let \ref{C1}, \ref{C2}, \ref{A1}, \ref{A2} hold with $\mu_H = 0$, and the set $\cU$ be bounded by a diameter $D_{\cU}$. 
		Fix $\varepsilon > 0$, and set the step-size in \eqref{eq:subgradient_method} as 
		$\stepsize =  \frac{1}{2 \ell} \cdot \min\left\{ 1 ,  \frac{\mu_c^2  \varepsilon^2}{48 D_{\cU}^2 G_F^2 }  \right\}. $
		Then for $\rho=2 \ell$, we have $\Lambda_T \leq \varepsilon$ after
		$	T = \wt \cO\rb{ \frac{ \ell D_{\cU}^2 }{\mu_c^2 } \frac{1}{\varepsilon}   + \frac{ \ell D_{\cU}^4 G_F^2 }{\mu_c^4  } \frac{1}{\varepsilon^3}  }  $ iterations.
	\end{theorem}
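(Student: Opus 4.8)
The plan is to feed the one-step recursion of \Cref{le:descent_like_subgrad} (specialized to $\mu_H = 0$) into a contraction argument, with the interpolation parameter $\alpha$ chosen as a function of $\varepsilon$. First I would control the term $\Exp{\sqnorm{c(\hat{x}^t) - c(x^*)}}$. Since $\hat{x}^t = \operatorname{prox}_{\Phi/\rho}(x^t) \in \cX$ (the proximal map keeps the iterate feasible because $\Phi = F + \delta_{\cX}$) and $x^* \in \cX$, both $c(\hat{x}^t)$ and $c(x^*)$ lie in $\cU$, so $\pnorm{c(\hat{x}^t) - c(x^*)} \leq D_{\cU}$. With $\mu_H = 0$, \Cref{le:descent_like_subgrad} then collapses to the clean recursion
\begin{equation*}
\Lambda_{t+1} \leq (1-\alpha)\Lambda_t + \frac{3\alpha^2 D_{\cU}^2}{2\mu_c^2\stepsize} + 8\ell\stepsize^2 G_F^2, \qquad \text{valid for } 0 < \alpha \leq \stepsize\ell.
\end{equation*}

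The key choice is to pick $\alpha$ proportional to $\varepsilon\stepsize$ rather than the maximal $\alpha = \stepsize\ell$: setting $\alpha = \frac{\mu_c^2 \varepsilon \stepsize}{3 D_{\cU}^2}$ makes the hidden-convexity bias term equal to $\frac{\alpha\varepsilon}{2}$ per step. (One checks $\alpha \leq \stepsize\ell$ holds whenever $\varepsilon \leq 3\ell D_{\cU}^2/\mu_c^2$; in the complementary regime the bias is already below $\varepsilon/2$ at $\alpha = \stepsize\ell$, and one uses that value instead.) Unrolling with $\sum_{t=0}^{T-1}(1-\alpha)^t \leq 1/\alpha$ gives
\begin{equation*}
\Lambda_T \leq (1-\alpha)^T \Lambda_0 + \frac{3\alpha D_{\cU}^2}{2\mu_c^2\stepsize} + \frac{8\ell\stepsize^2 G_F^2}{\alpha} = (1-\alpha)^T\Lambda_0 + \frac{\varepsilon}{2} + \frac{24\ell\stepsize G_F^2 D_{\cU}^2}{\mu_c^2\varepsilon}.
\end{equation*}

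Next I would choose the step-size to kill the remaining (variance) term. The prescribed $\stepsize = \frac{1}{2\ell}\min\{1, \frac{\mu_c^2\varepsilon^2}{48 D_{\cU}^2 G_F^2}\}$ is exactly what forces $\frac{24\ell\stepsize G_F^2 D_{\cU}^2}{\mu_c^2\varepsilon} \leq \frac{\varepsilon}{4}$ in both branches of the minimum, while simultaneously ensuring $\stepsize \leq 1/(2\ell)$ as \Cref{le:descent_like_subgrad} requires. Finally, using $(1-\alpha)^T \leq e^{-\alpha T}$, it suffices to take $T \geq \frac{1}{\alpha}\log\frac{4\Lambda_0}{\varepsilon}$ to drive the first term below $\varepsilon/4$, yielding $\Lambda_T \leq \varepsilon$. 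Substituting $\frac{1}{\alpha} = \frac{3 D_{\cU}^2}{\mu_c^2\varepsilon\stepsize}$ with the two cases of the step-size produces the two terms $\frac{\ell D_{\cU}^2}{\mu_c^2\varepsilon}$ (when $\stepsize = \frac{1}{2\ell}$) and $\frac{\ell D_{\cU}^4 G_F^2}{\mu_c^4\varepsilon^3}$ (when $\stepsize \propto \varepsilon^2$), with the logarithm absorbed into the $\wt\cO$.

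The main obstacle, and the only genuinely non-mechanical step, is the coupled calibration of $\alpha$ and $\stepsize$: one must recognize that $\alpha$ should scale with $\varepsilon$ to control the bias arising from the nonlinear interpolation $\hat{x}^t_\alpha = c^{-1}((1-\alpha)c(\hat{x}^t) + \alpha c(x^*))$ in \Cref{prop:HC}, while $\stepsize$ scales with $\varepsilon^2$ to control the stochastic error, and to verify that the feasibility constraint $\alpha \leq \stepsize\ell$ inherited from \Cref{le:descent_like_subgrad} is respected across both regimes of $\varepsilon$. Everything else is routine unrolling and constant bookkeeping.
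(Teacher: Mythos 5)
Your proposal is correct and takes essentially the same route as the paper's proof: both specialize \Cref{le:descent_like_subgrad} to $\mu_H=0$, bound $\norm{c(\hat x^t)-c(x^*)}$ by $D_{\cU}$, unroll the resulting geometric recursion to get $\Lambda_T \leq (1-\alpha)^T\Lambda_0 + \fr{3D_{\cU}^2\alpha}{2\mu_c^2\stepsize} + \fr{8\ell\stepsize^2 G_F^2}{\alpha}$, and calibrate $\alpha \propto \varepsilon\stepsize$ against the prescribed $\stepsize$ with $T = \fr{1}{\alpha}\log\rb{\nfr{\Lambda_0}{\varepsilon}}$ up to constants. The only cosmetic difference is that the paper packages the calibration as the single choice $\alpha=\min\left\{\stepsize\ell,\ \fr{2\varepsilon\mu_c^2\stepsize}{9D_{\cU}^2},\ \fr{\sqrt{32\ell}\,\mu_c G_F\stepsize^{3/2}}{\sqrt{3}D_{\cU}}\right\}$, whereas you verify the variance term directly under the two branches of the step-size; both yield the same $\wt\cO\rb{\fr{\ell D_{\cU}^2}{\mu_c^2}\fr{1}{\varepsilon}+\fr{\ell D_{\cU}^4 G_F^2}{\mu_c^4}\fr{1}{\varepsilon^3}}$ complexity, and your large-$\varepsilon$ caveat (where $\alpha=\stepsize\ell$ becomes binding) is handled no more explicitly in the paper than in your sketch.
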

\end{tcolorbox}

	\begin{proof}
	Setting $\mu_H = 0$ in \Cref{le:descent_like_subgrad} and leveraging compactness of $\cU$, we have 
		\begin{eqnarray*}
			\Lambda_{t+1}  &\leq&  (1- \alpha) \Lambda_t  + \fr{ 3 D_{\cU}^2 \alpha^{2} }{ 2 \mu_c^2 \stepsize }  + 8 \ell \stepsize^2 G_F^2 .     
		\end{eqnarray*}
		Unrolling the recursion for $t = 0$ to $t = T-1$, we get
		\begin{eqnarray*}
			\Lambda_{T}  &\leq&  (1 - \alpha)^{T} \Lambda_0  + \fr{ 3 D_{\cU}^2 \alpha }{ 2  \mu_c^2 \stepsize }   + \frac{ 8 \ell \stepsize^2 G_F^2 }{ \alpha } \leq \varepsilon , 
		\end{eqnarray*}
		where the last step holds by setting $\alpha = \min\left\{  \stepsize \ell, \frac{2 \varepsilon \mu_c^2 \stepsize }{9 D_{\cU}^2} ,  \frac{\sqrt{32 \ell} \mu_c G_F \stepsize^{\nfr{3}{2}} }{ \sqrt{3} D_{\cU} }  \right\} $ 
		after 
		$
		T = \frac{1}{ \alpha} \log\rb{\frac{3 \Lambda_0}{\varepsilon}} = \wt \cO \rb{ \frac{\ell D_{\cU}^2}{\mu_c^2  \varepsilon} + \frac{\ell D_{\cU}^4 G_F^2 }{\mu_c^4 \varepsilon^3} } .
		$
	\end{proof}

	We remark that in the absence of smoothness of $F(\cdot)$, the guarantee on $\Lambda_t$ might not necessarily translate to the function value gap $F(x^t)-F(x^*)$. However, with the following corollary we show that the output of SM, $x^T$, is in fact close to an $\varepsilon$-approximate global solution $\hat x^T=\operatorname{prox}_{\Phi / \rho}(x^T)$. 
	
	\begin{corollary}\label{cor:subgrad_C}
	Under the setting of \Cref{thm:subgrad_C}, SM finds a point $x^T \in \cX$, which is close to $\hat x^T$, an $\varepsilon$-global solution of  \eqref{problem:original}. More specifically, it holds that $\Exp{\sqnorm{\hat x^T - x^T}} \leq \varepsilon / \ell$ and $\Exp{F(\hat x^T) - F(x^*)} \leq \varepsilon$ after $T = \wt \cO(\varepsilon^{-3})$.
	\end{corollary}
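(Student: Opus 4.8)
The plan is to exploit the exact decomposition of the Moreau envelope furnished by its very definition, turning the single bound $\Lambda_T \leq \varepsilon$ from \Cref{thm:subgrad_C} into the two desired estimates simultaneously. The key observation is that both quantities we wish to control appear additively inside $\Lambda_T$ with nonnegative coefficients, so a single scalar bound suffices.

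First I would recall that, by the definitions of the Moreau envelope and the proximal mapping, with $\hat x^T = \operatorname{prox}_{\Phi/\rho}(x^T)$ we have the identity $\Phi_{1/\rho}(x^T) = \Phi(\hat x^T) + \frac{\rho}{2}\sqnorm{\hat x^T - x^T}$. Since $\rho = 2\ell > \ell$ and $\Phi = F + \delta_{\cX}$ is $\ell$-weakly convex, the proximal subproblem is strongly convex and attains a finite minimum, so its minimizer $\hat x^T$ necessarily lies in $\cX$ (otherwise $\Phi(\hat x^T) = +\infty$). Consequently $\delta_{\cX}(\hat x^T) = 0$ and $\Phi(\hat x^T) = F(\hat x^T)$, which gives
$$
\Phi_{1/\rho}(x^T) - F(x^*) = \bigl( F(\hat x^T) - F(x^*) \bigr) + \frac{\rho}{2}\sqnorm{\hat x^T - x^T}.
$$

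Next I would take expectations and invoke feasibility: because $\hat x^T \in \cX$ and $x^* \in \cX^*$, we have $F(\hat x^T) \geq F(x^*)$, so the first summand is nonnegative, while the second is manifestly nonnegative. Therefore $\Lambda_T$ equals the sum of two nonnegative terms, and the bound $\Lambda_T \leq \varepsilon$ from \Cref{thm:subgrad_C} forces each term to be at most $\varepsilon$ separately. This yields $\Exp{F(\hat x^T) - F(x^*)} \leq \varepsilon$ directly, and $\frac{\rho}{2}\Exp{\sqnorm{\hat x^T - x^T}} \leq \varepsilon$, which upon substituting $\rho = 2\ell$ gives $\Exp{\sqnorm{\hat x^T - x^T}} \leq \varepsilon/\ell$. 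The iteration count $T = \wt\cO(\varepsilon^{-3})$ is inherited verbatim from \Cref{thm:subgrad_C}.

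There is no substantial obstacle here; the only point requiring care is the feasibility argument $\hat x^T \in \cX$, which is what lets us replace $\Phi(\hat x^T)$ by $F(\hat x^T)$ and then use global optimality of $x^*$ over $\cX$ to sign the function-value gap. Everything else is a mechanical consequence of the Moreau-envelope identity and the nonnegativity of its two constituent terms.
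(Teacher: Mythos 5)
Your proposal is correct and follows exactly the argument the paper intends: the paper's one-line proof (``follows directly from the definition of $\Lambda_T$ and \Cref{thm:subgrad_C}'') is precisely your decomposition $\Lambda_T = \Exp{F(\hat x^T) - F(x^*)} + \frac{\rho}{2}\Exp{\sqnorm{\hat x^T - x^T}}$ with both summands nonnegative (using $\hat x^T \in \cX$ and $\rho = 2\ell$), so each is separately at most $\varepsilon$. You have merely, and correctly, filled in the elementary details the paper leaves implicit, including the feasibility of $\hat x^T$ via the strong convexity of the proximal subproblem.
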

 \begin{proof}
    The result follows directly from the definition of $\Lambda_T$ and \Cref{thm:subgrad_C}.
 \end{proof}
{\color{black}
 The global convergence of iterates of SM in function values follows immediately from \Cref{cor:subgrad_C}. By the Lipschitz continuity of $F$, we have $\mathbb E F(x^T) - F(x^*) \leq \mathbb E F(\hat x^T) - F(x^*) + G_F \mathbb E \|\hat x^T - x^T\| \leq \varepsilon + G_F \sqrt{\varepsilon \ \ell} $. Taking $\varepsilon$ small, we observe that SM converges globally in function value to arbitrary small accuracy in expectation. In what follows, this result is tightened and improved in several aspects when more structures (hidden strong convexity of $F$ or smoothness of $F$) are available. 
}
		\subsection{Hidden Strongly Convex Setting}
 The following theorem presents a stronger result in the case when $F(\cdot)$ is additionally hidden strongly convex. 
 
\begin{tcolorbox}[colback=gray!5!white,colframe=gray!75!black]
	
		\begin{theorem}\label{thm:subgrad_SC}
		Let \ref{C1}, \ref{C2}, \ref{A1}, \ref{A2} hold with $\mu_H > 0$. Then for $\rho = 2 \ell$ and any $\stepsize \leq \frac{1}{2 \ell}$, $\alpha \leq \min\left\{ \stepsize \ell, \frac{ \stepsize \mu_c^2 \mu_H }{2} \right\}$, we have for all $T \geq 0$
		\begin{eqnarray*}
			\Lambda_T &\leq&  (1- \alpha)^{T} \Lambda_0  + \frac{ 8 \ell \stepsize^2 G_F^2 }{ \alpha }  . 
		\end{eqnarray*}
		Fix $\varepsilon > 0$, and set the step-size in \eqref{eq:PSGD} as 
		$\stepsize = \min\left\{ \frac{1}{2 \ell} , \frac{\mu_c^2  \mu_H \varepsilon}{20 \ell G_F^2 }  \right\}. $
		Then $\Lambda_T \leq \varepsilon$ after
		$	T = \wt \cO\rb{ \frac{ \ell }{\mu_c^2 \mu_H }  + \frac{ \ell G_F^2 }{\mu_c^4  \mu_H^2 } \frac{1}{\varepsilon}  } $ iterations.
	\end{theorem}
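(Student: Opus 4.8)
The plan is to feed the one-step inequality of \Cref{le:descent_like_subgrad} into a geometric-series argument, exploiting the negative curvature term supplied by hidden strong convexity ($\mu_H>0$) to eliminate the bias contribution entirely. Recall that \Cref{le:descent_like_subgrad} yields, for $\rho=2\ell$, $\eta\leq\frac{1}{2\ell}$ and $0<\alpha\leq\eta\ell$,
$$
\Lambda_{t+1}\leq(1-\alpha)\Lambda_t+\left(\frac{3\alpha^2}{2\mu_c^2\eta}-\frac{(1-\alpha)\alpha\mu_H}{2}\right)\mathbb{E}\left[\sqnorm{c(\hat x^t)-c(x^*)}\right]+8\ell\eta^2 G_F^2.
$$
The first step is to choose $\alpha$ small enough that the bracketed coefficient of $\mathbb{E}[\sqnorm{c(\hat x^t)-c(x^*)}]$ is nonpositive, so that the entire nonnegative term can simply be discarded. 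Since $\alpha\leq\eta\ell\leq\frac12$ forces $1-\alpha\geq\frac12$, nonpositivity reduces to $\frac{3\alpha}{\mu_c^2\eta}\leq(1-\alpha)\mu_H$, which holds once $\alpha$ is a sufficiently small multiple of $\eta\mu_c^2\mu_H$; this is exactly the regime governed by the hypothesis $\alpha\leq\frac{\eta\mu_c^2\mu_H}{2}$ (up to the absolute constant absorbing the $1-\alpha$ factor). What is crucial here is that the negative term originates from the hidden \emph{strong} convexity estimate \eqref{prop:F_x_alpha_strong_conv}, which is why $\mu_H>0$ is indispensable. Dropping the bracketed term leaves the clean contraction $\Lambda_{t+1}\leq(1-\alpha)\Lambda_t+8\ell\eta^2 G_F^2$.

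Second, I would unroll this recursion from $t=0$ to $t=T-1$ and sum the resulting geometric series, using $\sum_{k=0}^{T-1}(1-\alpha)^k\leq\frac{1}{\alpha}$, to obtain
$$
\Lambda_T\leq(1-\alpha)^T\Lambda_0+\frac{8\ell\eta^2 G_F^2}{\alpha},
$$
which is precisely the first assertion of the theorem and holds for every $T\geq0$ and every admissible pair $(\eta,\alpha)$.

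Third, for the complexity bound I would tune the two free parameters so that each of the two terms is at most $\varepsilon/2$. Taking $\alpha=\Theta(\eta\mu_c^2\mu_H)$ at its largest admissible value turns the residual term into $\frac{8\ell\eta^2 G_F^2}{\alpha}=\Theta\!\left(\frac{\ell\eta G_F^2}{\mu_c^2\mu_H}\right)$, so the stated choice $\eta=\min\{\frac{1}{2\ell},\frac{\mu_c^2\mu_H\varepsilon}{20\ell G_F^2}\}$ drives this below $\varepsilon/2$. For the transient term I would use $(1-\alpha)^T\leq e^{-\alpha T}$, whence $T\geq\frac{1}{\alpha}\log\frac{2\Lambda_0}{\varepsilon}$ suffices to make $(1-\alpha)^T\Lambda_0\leq\varepsilon/2$. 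Substituting $\alpha=\Theta(\eta\mu_c^2\mu_H)$ and reading off the two regimes of the minimum defining $\eta$ — the regime $\eta=\frac{1}{2\ell}$ giving $\frac1\alpha=\Theta(\frac{\ell}{\mu_c^2\mu_H})$, and the regime $\eta\propto\varepsilon$ giving $\frac1\alpha=\Theta(\frac{\ell G_F^2}{\mu_c^4\mu_H^2\varepsilon})$ — produces the advertised $T=\wt\cO\!\left(\frac{\ell}{\mu_c^2\mu_H}+\frac{\ell G_F^2}{\mu_c^4\mu_H^2}\frac{1}{\varepsilon}\right)$.

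The only genuinely delicate point is the first step, and it is exactly where this argument departs from the merely hidden convex case. In \Cref{thm:subgrad_C} the coefficient $\frac{3\alpha^2}{2\mu_c^2\eta}$ has no negative companion, forcing the crude bound $\sqnorm{c(\hat x^t)-c(x^*)}\leq D_{\cU}^2$ and hence a persistent $\Theta(\alpha/\eta)$ error that only vanishes as $\alpha\to0$, which is what degrades the rate to $\varepsilon^{-3}$. Here the term $-\frac{(1-\alpha)\alpha\mu_H}{2}\sqnorm{c(\hat x^t)-c(x^*)}$ absorbs the error outright, so no diameter bound is required and the bias contracts geometrically rather than saturating. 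I would therefore take care only to check that the admissible range for $\alpha$ — the intersection of $\alpha\leq\eta\ell$ and $\alpha\leq\frac{\eta\mu_c^2\mu_H}{2}$ — is nonempty for every $\eta\leq\frac{1}{2\ell}$, which it plainly is.
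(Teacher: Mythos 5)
Your proposal is correct and follows essentially the same route as the paper's own proof: invoke \Cref{le:descent_like_subgrad} with $\mu_H>0$, use $\alpha\leq \min\{\eta\ell,\,\eta\mu_c^2\mu_H/2\}$ to render the coefficient of $\Exp{\sqnorm{c(\hat x^t)-c(x^*)}}$ nonpositive and discard that term, then unroll the geometric recursion and tune $\alpha$ and $\eta$ exactly as you describe. Your parenthetical about absorbing the $(1-\alpha)$ factor into the constant is well placed — strict nonpositivity requires $\alpha\leq(1-\alpha)\eta\mu_c^2\mu_H/3$, so the stated threshold $\eta\mu_c^2\mu_H/2$ is loose by an absolute constant — but this matches the paper's proof, which asserts the cancellation in a single line at the same level of rigor.
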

	
\end{tcolorbox}

	\begin{proof}
		We invoke \Cref{le:descent_like_subgrad} with $\mu_H > 0$. The choice of $\alpha$ guarantees the coefficient in front of $\Exp{\sqnorm{ c(\hat x^t) - c(x^*) } } $ is non-positive and
		\begin{eqnarray*}
			\Lambda_{t+1}  &\leq&  (1- \alpha) \Lambda_t   + 8 \ell \stepsize^2 G_F^2 .     
		\end{eqnarray*}
		It remains to conclude the proof by unrolling the recursion and setting the step-size accordingly.
	\end{proof}
In the presence of hidden strong convexity, since the optimal $x^* \in \cX^*$ is unique, we can establish a strong convergence of the sequence $\{x^t\}_{t\geq0}$ to $x^*$. 
\begin{corollary}\label{cor:subgrad_SC}
		Let the assumptions of \Cref{thm:subgrad_SC} hold and $x^T$ be the output of the method \eqref{eq:subgradient_method} after $T$ iterations (given by \Cref{thm:subgrad_SC}). Then $ \Exp{ \sqnorm{x^T - x^*} } \leq \rb{ \frac{4}{\mu_H \mu_c^2} + \frac{2}{\ell} } \varepsilon $ . 
	\end{corollary}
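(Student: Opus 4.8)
The plan is to convert the Moreau-envelope guarantee $\Lambda_T \le \varepsilon$ supplied by \Cref{thm:subgrad_SC} into a bound on $\Exp{\sqnorm{x^T - x^*}}$ by routing through the proximal point $\hat x^T := \operatorname{prox}_{\Phi/\rho}(x^T)$. The starting observation is that $\Lambda_T$ simultaneously controls two quantities. Writing out the definition of the Moreau envelope and using that $\hat x^T \in \cX$ (so that $\delta_{\cX}(\hat x^T)=0$ and $\Phi(\hat x^T)=F(\hat x^T)$), we get
$$ \Lambda_T = \Exp{F(\hat x^T) - F(x^*)} + \fr{\rho}{2}\Exp{\sqnorm{\hat x^T - x^T}}. $$
Since $\hat x^T \in \cX$ forces $F(\hat x^T) \ge F(x^*)$, both summands are nonnegative, so each is individually bounded by $\Lambda_T \le \varepsilon$. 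With $\rho = 2\ell$ this yields the two estimates $\Exp{\sqnorm{\hat x^T - x^T}} \le \varepsilon/\ell$ and $\Exp{F(\hat x^T) - F(x^*)} \le \varepsilon$.

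Next I would turn the function-value gap at $\hat x^T$ into a distance bound via a quadratic growth inequality obtained from hidden strong convexity. Applying \Cref{prop:HC} at $x = \hat x^T$ and dividing \eqref{prop:F_x_alpha_strong_conv} by $(1-\alpha)$ after using $F(x_\alpha)\ge F(x^*)$, then letting $\alpha \to 1^-$, gives $F(\hat x^T) - F(x^*) \ge \fr{\mu_H}{2}\sqnorm{c(\hat x^T) - c(x^*)}$ (this is exactly the quadratic growth of the $\mu_H$-strongly convex $H$ at its minimizer $u^* = c(x^*)$). Condition \ref{C2} then supplies $\sqnorm{c(\hat x^T) - c(x^*)} \ge \mu_c^2\,\sqnorm{\hat x^T - x^*}$, so that
$$ \Exp{\sqnorm{\hat x^T - x^*}} \le \fr{2}{\mu_H \mu_c^2}\Exp{F(\hat x^T) - F(x^*)} \le \fr{2\varepsilon}{\mu_H \mu_c^2}. $$

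Finally I would combine the two distance estimates through the elementary inequality $\sqnorm{x^T - x^*} \le 2\sqnorm{x^T - \hat x^T} + 2\sqnorm{\hat x^T - x^*}$. Taking expectations and substituting the bounds above gives
$$ \Exp{\sqnorm{x^T - x^*}} \le \fr{2\varepsilon}{\ell} + \fr{4\varepsilon}{\mu_H \mu_c^2} = \rb{\fr{4}{\mu_H \mu_c^2} + \fr{2}{\ell}}\varepsilon, $$
which is the claimed bound. The only genuinely nontrivial step is the quadratic growth inequality, i.e., translating a small value gap $F(\hat x^T)-F(x^*)$ into a small distance $\sqnorm{\hat x^T - x^*}$; this is precisely where hidden strong convexity is indispensable (strong convexity of $H$ provides growth in the $u$-variable and the lower isometry \ref{C2} transports it back to the $x$-variable), and it is also what makes $x^*$ unique so that the statement is meaningful. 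Everything else is bookkeeping around the Moreau-envelope decomposition together with a single triangle inequality.
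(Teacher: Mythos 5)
Your proof is correct and follows essentially the same route as the paper: split $\Lambda_T \le \varepsilon$ into the two bounds $\Exp{F(\hat x^T)-F(x^*)} \le \varepsilon$ and $\Exp{\sqnorm{\hat x^T - x^T}} \le \varepsilon/\ell$, convert the value gap into $\Exp{\sqnorm{\hat x^T - x^*}} \le \tfrac{2\varepsilon}{\mu_H \mu_c^2}$ via quadratic growth of the strongly convex $H$ transported back by \ref{C2}, and combine with the triangle (Young) inequality. The only cosmetic difference is that you derive the quadratic-growth inequality by letting $\alpha \to 1^-$ in \Cref{prop:HC}, whereas the paper invokes the first-order characterization of strong convexity of $H$ together with the optimality condition at $u^* = c(x^*)$ directly; the two justifications are equivalent.
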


	\begin{proof}
		Since $H(\cdot)$ is $\mu_H$-strongly convex on $\cX$, we have 
		\begin{eqnarray}\label{eq:SC_implication}
			F(\hat x^T) - F(x^*) = H(c(\hat x^T)) - H(c(x^*)) \geq \frac{\mu_H}{2}\sqnorm{c(\hat x^T) - c(x^*) } \geq \frac{\mu_H \mu_c^2}{2}\sqnorm{ \hat x^T - x^* }, \notag \\
		\end{eqnarray}
		where the first inequality follows by the first-order characterization of strong convexity and the optimality condition, and the last inequality holds by C.2.  
		
		Recall that $\Lambda_T = \Exp{ F(\hat x^T) - F(x^*)  + \frac{\rho}{2} \sqnorm{\hat x^T - x^T} } $ with $\rho = 2\ell$. Then 
		\begin{eqnarray*}
			 \Exp{ \sqnorm{x^T - x^*} } &\leq& 2 \Exp{ \sqnorm{\hat x^T - x^*} } + 2 \Exp{ \sqnorm{\hat x^T - x^T} } \\
			&\leq& \frac{4}{\mu_H \mu_c^2} \Exp{ F(\hat x^T) - F(x^*) } + 2 \, \Exp{ \sqnorm{\hat x^T - x^T} } \leq \rb{ \frac{4}{\mu_H \mu_c^2} + \frac{2}{\ell} } \varepsilon .
		\end{eqnarray*}
		where the second inequality holds by \eqref{eq:SC_implication} and the last step follows by \Cref{thm:subgrad_SC}. \end{proof}
	
	The above results highlight a notable observation:  although $F(\cdot)$ is non-smooth and non-convex, simple SM converges to a globally optimal solution. This stands in contrast to recent results in general non-smooth non-convex optimization, where more sophisticated (randomized) algorithms are needed to obtain a meaningful solution (e.g., $(\delta, \epsilon)$-Goldstein stationary point) \cite{jordan2023deterministic}. Moreover, it is worth emphasizing the distinctions in the sample complexity results compared to classical findings in convex settings:  \Cref{thm:subgrad_C,thm:subgrad_SC} implies the sample complexities  of $\wt\cO(\varepsilon^{-3})$ and $\wt\cO(\varepsilon^{-1})$ respectively to reach $\Exp{ \Phi_{1/\rho}(x) - F(x^*) } \leq \varepsilon$ for hidden convex and hidden strongly convex problems, whereas in convex and strongly convex settings, the sample complexities are  $\cO(\varepsilon^{-2})$ and $\cO(\varepsilon^{-1})$ respectively to reach $\Exp{F(x) - F^*} \leq \varepsilon$ \cite{nemirovski1983problem}.

	  \section{Projected SGD}\label{sec:PSGD}
In this section, we consider the smooth setting when $F$ is continuously differentiable.  In this case,  SM reduces to Projected SGD (P-SGD): 
\begin{tcolorbox}[colback=white,colframe=gray!75!black]
	\begin{equation}\label{eq:PSGD}
			x^{t+1} = \Pi_{\mathcal{X}}(x^t-\stepsize  \nabla f(x^{t}, \xi^{t}) ) .  
	\end{equation}
\end{tcolorbox}

In particular, we assume that

    \begin{enumerate}
    \myitem{A.1'}\label{A1'} The function $F: \cX \rightarrow \R$ is differentiable on a closed, convex set $\cX$ and its gradient $\nabla F(x)$ is $L$-Lipschitz continuous. 
    
    \myitem{A.2'}\label{A2'} We have access to an unbiased stochastic gradient oracle with bounded variance $\sigma > 0$, i.e. for any $x\in \cX$: 
	$\Exp{\nabla f(x,\xi)} = \nabla F(x)$, and 
	$$\Exp{\sqnorm{\nabla f(x,\xi) - \nabla F(x)}} \leq \sigma^2, $$where expectations are with respect to the random variable $\xi \sim \cD$.  
    \end{enumerate}

Note that Assumption \ref{A2'} on bounded variance is considerably weaker than Assumption \ref{A2} on the bounded second moment of stochastic gradient in previous section. 
	Replacing \cref{le:prox_grad_prox_bound_nonsmooth} with \cref{le:prox_grad_prox_bound} in the proof of \cref{le:descent_like_subgrad} of the previous section, we are able to derive the following results under Assumptions \ref{A1'} and \ref{A2'}. 
	
	\begin{theorem}\label{le:descent_like_Proj_SGD}
		Let  \ref{C1}, \ref{C2}, \ref{A1'}, \ref{A2'} hold with $\mu_H\geq 0$. Set $\rho=4 L$, $\stepsize \leq \frac{2}{9 L}$. Define 
		$\hat x^t := \operatorname{prox}_{ \Phi / \rho }(x^t)$. Then for any $0 < \alpha \leq 2\stepsize  L  $ and $t\geq 0$
		\begin{eqnarray*}
			\Lambda_{t+1}  &\leq&  (1- \alpha) \Lambda_t  + \rho \stepsize^2 \sigma^2  + \rb{  \fr{ 3 \alpha^{2} }{ 2 \mu_c^2 \stepsize } -  \fr{(1-\alpha)\alpha  \mu_H}{2} } \Exp{\sqnorm{ c(\hat{x}^t) - c(x^*) } } . 
		\end{eqnarray*}
	\end{theorem}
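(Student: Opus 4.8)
The plan is to follow the proof of \Cref{le:descent_like_subgrad} essentially verbatim, making only the two substitutions dictated by the smooth setting: replacing the non-smooth one-step estimate \Cref{le:prox_grad_prox_bound_nonsmooth} with its smooth counterpart \Cref{le:prox_grad_prox_bound}, and adjusting the constants to $\rho = 4L$ and $\stepsize \leq \nfr{2}{9L}$ as required by that lemma. The smooth lemma is exactly what upgrades the additive noise from the second-moment quantity $G_F^2$ to the variance $\sigma^2$, since $L$-smoothness lets one center the stochastic gradient around $\nabla F$ and exploit the bounded-variance assumption \ref{A2'} in place of the bounded-second-moment assumption \ref{A2}.

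First I would start from the Moreau-envelope identity $\Exp{\Phi_{1/\rho}(x^{t+1})} = \Exp{\Phi(\hat x^{t+1}) + \fr{\rho}{2}\sqnorm{\hat x^{t+1} - x^{t+1}}}$ and use the optimality of $\hat x^{t+1}$ to bound this by $\Exp{\Phi(z) + \fr{\rho}{2}\sqnorm{z - x^{t+1}}}$ for an arbitrary $z \in \cX$. Then I would apply Young's inequality with a free parameter $s > 0$ to split $\sqnorm{z - x^{t+1}} \leq (1+s)\sqnorm{\hat x^t - x^{t+1}} + \rb{1 + \nfr{1}{s}}\sqnorm{\hat x^t - z}$, take the conditional expectation, and invoke \Cref{le:prox_grad_prox_bound} on the first term to obtain a contraction $(1-\stepsize\rho)\sqnorm{\hat x^t - x^t}$ plus an additive term of order $\stepsize^2 \sigma^2$. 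Choosing $s = \nfr{\stepsize\rho}{2}$ yields $(1+s)(1-\stepsize\rho) \leq 1 - \nfr{\stepsize\rho}{2}$, $1 + s \leq 2$, and $1 + \nfr{1}{s} \leq \nfr{3}{\stepsize\rho}$ (the bound $1+s\le 2$ being valid since $\stepsize \le \nfr{2}{9L} < \nfr{1}{2L} = \nfr{2}{\rho}$), after which the additive noise consolidates into $\rho\stepsize^2\sigma^2$.

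Next I would specialize $z$ to the hidden-convex interpolant $z = \hat x_\alpha^t := c^{-1}\rb{(1-\alpha) c(\hat x^t) + \alpha c(x^*)}$ and apply \Cref{prop:HC}: inequality \eqref{prop:F_x_alpha_strong_conv} bounds $F(z)$ by $(1-\alpha) F(\hat x^t) + \alpha F(x^*) - \fr{(1-\alpha)\alpha\mu_H}{2}\sqnorm{c(\hat x^t) - c(x^*)}$, while \eqref{prop:x_alpha_x_bound} gives $\sqnorm{z - \hat x^t} \leq \fr{\alpha^2}{\mu_c^2}\sqnorm{c(\hat x^t) - c(x^*)}$. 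Substituting these, the $\sqnorm{\hat x^t - x^t}$ term carries coefficient $\rb{1 - \nfr{\stepsize\rho}{2}}\fr{\rho}{2} \leq (1-\alpha)\fr{\rho}{2}$, where the inequality uses $\alpha \leq 2\stepsize L = \nfr{\stepsize\rho}{2}$; combining this with $(1-\alpha) F(\hat x^t)$ reconstitutes $(1-\alpha)\Exp{\Phi_{1/\rho}(x^t)}$. The term $\fr{3}{2\stepsize}\Exp{\sqnorm{\hat x^t - z}}$ contributes the coefficient $\fr{3\alpha^2}{2\mu_c^2\stepsize}$ on $\Exp{\sqnorm{c(\hat x^t) - c(x^*)}}$, which together with the $-\fr{(1-\alpha)\alpha\mu_H}{2}$ term gives the stated bracketed factor. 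Subtracting $F(x^*)$ from both sides produces exactly the claimed recursion.

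The hard part is not in this argument, which is bookkeeping identical to the non-smooth case, but is isolated in the smooth distance lemma \Cref{le:prox_grad_prox_bound} it invokes: the specific constants $\rho = 4L$ and $\stepsize \leq \nfr{2}{9L}$ are precisely what guarantee the proximal contraction factor $(1-\stepsize\rho)$ while keeping the residual noise proportional to the variance $\sigma^2$ rather than the full second moment. Once that lemma is in place, every remaining step transfers mechanically from the proof of \Cref{le:descent_like_subgrad}.
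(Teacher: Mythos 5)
Your proposal is correct and takes exactly the paper's route: the paper obtains \Cref{le:descent_like_Proj_SGD} precisely by rerunning the proof of \Cref{le:descent_like_subgrad} with \Cref{le:prox_grad_prox_bound} substituted for \Cref{le:prox_grad_prox_bound_nonsmooth}, and your bookkeeping (the choice $s = \stepsize\rho/2$, the check $\stepsize\rho \leq \nfr{8}{9} < 1$ so that $1+s\leq 2$ and $1+\nfr{1}{s} \leq \nfr{3}{\stepsize\rho}$, the noise term consolidating to $\rho\stepsize^2\sigma^2$, and $\alpha \leq 2\stepsize L = \nfr{\stepsize\rho}{2}$ reconstituting $(1-\alpha)\Lambda_t$) all checks out.
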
 

 Using the above result, we provide a refined analysis of Projected SGD in the differentiable setting with smoothness and bounded variance. 

\subsection{Hidden Convex Setting} We start with the hidden convex case.

\begin{tcolorbox}[colback=gray!5!white,colframe=gray!75!black]
	
		\begin{theorem}\label{thm:Proj_SGD_C}
		Let \ref{C1}, \ref{C2}, \ref{A1'}, \ref{A2'} hold with $\mu_H = 0$, and the set $\cU$ be bounded by a diameter $D_{\cU}$. Then for $\rho = 4 L$ and any $\stepsize \leq \frac{2}{9 L}$, $\alpha \leq 2\stepsize  L  $, we have for all $T \geq 0$
		\begin{eqnarray*}
			\Lambda_T &\leq&  (1- \alpha)^{T} \Lambda_0  + \fr{ 3 D_{\cU}^2 \alpha }{ 2 \mu_c^2 \stepsize }   + \frac{ 4 L \stepsize^2 \sigma^2 }{ \alpha }  . 
		\end{eqnarray*}
		Fix $\varepsilon > 0$, and set the step-size in \eqref{eq:PSGD} as 
		$\stepsize = \frac{2}{9 L} \cdot \min\left\{ 1 ,  \frac{\mu_c^2  \varepsilon^2}{12 D_{\cU}^2 \sigma^2 }  \right\}. $
		Then $\Lambda_T \leq \varepsilon$ after
		$	T = \wt \cO\rb{ \frac{ L D_{\cU}^2 }{\mu_c^2 } \frac{1}{\varepsilon}   + \frac{ L D_{\cU}^4 \sigma^2 }{\mu_c^4  } \frac{1}{\varepsilon^3}  }  $ iterations.
	\end{theorem}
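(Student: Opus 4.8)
The plan is to mirror the argument for the non-smooth case in \Cref{thm:subgrad_C}, now feeding in the smooth one-step estimate of \Cref{le:descent_like_Proj_SGD} in place of its non-smooth analogue. First I would instantiate \Cref{le:descent_like_Proj_SGD} with $\mu_H = 0$ and $\rho = 4L$. Setting $\mu_H=0$ removes the strongly convex correction, leaving only the nonnegative coefficient $\frac{3\alpha^2}{2\mu_c^2\eta}$ multiplying $\Exp{\sqnorm{c(\hat x^t) - c(x^*)}}$. Because $\hat x^t \in \cX$ and $x^* \in \cX^*$ give $c(\hat x^t), c(x^*) \in \cU$, boundedness of $\cU$ yields $\sqnorm{c(\hat x^t) - c(x^*)} \leq D_{\cU}^2$, producing the clean one-step recursion
\[
\Lambda_{t+1} \leq (1-\alpha)\Lambda_t + \frac{3 D_{\cU}^2 \alpha^2}{2\mu_c^2 \eta} + 4L\eta^2\sigma^2 .
\]

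Next I would unroll this recursion from $t=0$ to $T-1$. The homogeneous part contributes $(1-\alpha)^T\Lambda_0$, while the two constant driving terms are summed against the geometric series $\sum_{t=0}^{T-1}(1-\alpha)^t \leq 1/\alpha$ (valid since $\alpha\in(0,1]$). This is exactly the first displayed bound of the theorem,
\[
\Lambda_T \leq (1-\alpha)^T\Lambda_0 + \frac{3 D_{\cU}^2 \alpha}{2\mu_c^2 \eta} + \frac{4L\eta^2\sigma^2}{\alpha} .
\]

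It then remains to tune $\alpha$ and $\eta$. I would pick $\alpha = \frac{2\mu_c^2\eta\varepsilon}{9 D_{\cU}^2}$, which forces the second term to equal $\varepsilon/3$; this also satisfies the admissibility constraint $\alpha \leq 2\eta L$ of \Cref{le:descent_like_Proj_SGD} in the relevant regime $\varepsilon \leq 9LD_{\cU}^2/\mu_c^2$ (otherwise one simply caps $\alpha$ at $2\eta L$). Substituting this $\alpha$ into the third term gives $\frac{18L\eta D_{\cU}^2\sigma^2}{\mu_c^2\varepsilon}$, and demanding it be $\leq \varepsilon/3$ is precisely the constraint $\eta \leq \frac{\mu_c^2\varepsilon^2}{54 L D_{\cU}^2\sigma^2}$; the prescribed step-size $\eta = \frac{2}{9L}\min\{1, \frac{\mu_c^2\varepsilon^2}{12 D_{\cU}^2\sigma^2}\}$ is built to enforce simultaneously $\eta\leq 2/(9L)$ and this variance bound in both branches of the minimum. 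Finally $(1-\alpha)^T\Lambda_0 \leq \varepsilon/3$ as soon as $T \geq \frac{1}{\alpha}\log(3\Lambda_0/\varepsilon)$; substituting $1/\alpha = \frac{9 D_{\cU}^2}{2\mu_c^2\eta\varepsilon}$ and the two admissible values of $\eta$ produces the $\wt\cO(LD_{\cU}^2/(\mu_c^2\varepsilon))$ contribution when the minimum equals $1$ and the $\wt\cO(LD_{\cU}^4\sigma^2/(\mu_c^4\varepsilon^3))$ contribution when the variance branch is active, which together give the claimed complexity.

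The only genuinely delicate bookkeeping — and the step I would watch most carefully — is tracking the two regimes encoded in the $\min$ defining $\eta$ while checking that the single choice of $\alpha$ keeps all three terms at order $\varepsilon$ and respects $\alpha \leq 2\eta L$. Beyond this, the argument is a faithful transcription of the non-smooth proof, with the second-moment bound $G_F^2$ replaced by the variance $\sigma^2$ and the weak-convexity constant $\ell$ replaced by the smoothness constant $L$; no new structural idea beyond \Cref{le:descent_like_Proj_SGD} is needed.
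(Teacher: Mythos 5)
Your proposal is correct and takes essentially the same route as the paper: the paper likewise instantiates \Cref{le:descent_like_Proj_SGD} with $\mu_H=0$, bounds $\sqnorm{c(\hat{x}^t)-c(x^*)}$ by $D_{\cU}^2$, unrolls the geometric recursion using $\sum_{t=0}^{T-1}(1-\alpha)^t\leq 1/\alpha$, and concludes with $T=\frac{1}{\alpha}\log\rb{3\Lambda_0/\varepsilon}$. The only cosmetic difference is in the tuning: the paper sets $\alpha=\min\left\{2\stepsize L,\ \frac{2\varepsilon\mu_c^2\stepsize}{3D_{\cU}^2},\ \frac{\sqrt{8L}\,\mu_c\sigma\stepsize^{\nfr{3}{2}}}{\sqrt{3}\,D_{\cU}}\right\}$, whose third branch balances the two driving terms, whereas you enforce the variance term through the step-size alone (noting that the prescribed $\stepsize$ equals exactly $\frac{\mu_c^2\varepsilon^2}{54LD_{\cU}^2\sigma^2}$ in the variance branch) — both tunings yield the stated bound and complexity, up to the same benign coarse-accuracy corner case ($\varepsilon\gtrsim LD_{\cU}^2/\mu_c^2$, when the cap $\alpha=2\stepsize L$ is active) that the paper's terse proof also glosses over.
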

	
\end{tcolorbox}
	
	\begin{proof}
 We set $\alpha = \min\left\{ 2 \stepsize L, \frac{2 \varepsilon \mu_c^2 \stepsize }{3 D_{\cU}^2} ,  \frac{\sqrt{8 L} \mu_c \sigma \stepsize^{\nfr{3}{2}} }{ \sqrt{3} D_{\cU} }  \right\} $, then, given \Cref{le:descent_like_Proj_SGD}, the reminder of the proof is similar to the proof of \Cref{thm:subgrad_C} in the previous section. 
	\end{proof}
	

 Similar to \Cref{cor:subgrad_C}, we can show that $x^T$ is close to an $\varepsilon$-global optimal solution. But in the case of smooth $F(\cdot)$, we can also derive a stronger result after applying one (post-processing) step of Projected SGD with mini-batch. We defer this result to \Cref{cor:Proj_SGD_C} in \Cref{sec:minibatching_appendix}. 

\Cref{thm:Proj_SGD_C} implies that in deterministic case when $\sigma^2=0$, the iteration complexity of the gradient method is $\wt \cO(\varepsilon^{-1})$, which coincides with the iteration complexity of Projected GD in the smooth convex setting in terms of $\varepsilon$ (up to a logarithmic factor). However, in the stochastic setting, $\wt \cO(\varepsilon^{-3})$ sample complexity is worse than the well known $\cO(\varepsilon^{-2})$ sample complexity in the convex case \cite{lan2020first}. 
On the other hand, for general smooth nonconvex optimization, Projected SGD is only known to converge to a first-order stationary point (FOSP), i.e., find $x\in \cX$ with $\Exp{\norm{\nabla F(x)}}\leq \epsilon$, with the sample complexity $\cO(\epsilon^{-4})$  \cite{lan2020first,davis2019stochastic}.

\subsection{Hidden Strongly Convex Setting} Similar to the exposition in \Cref{sec:subgradient_method}, we present an improved sample complexity result for hidden strongly convex problems.

\begin{tcolorbox}[colback=gray!5!white,colframe=gray!75!black]
	
\begin{theorem}\label{thm:Proj_SGD_SC}
		Let \ref{C1}, \ref{C2}, \ref{A1'}, \ref{A2'} hold with $\mu_H > 0$. Then for $\rho = 4 L$ and any $\stepsize \leq \frac{2}{9 L}$, $\alpha \leq \min\left\{ 2 \stepsize L, \frac{ \stepsize \mu_c^2 \mu_H }{2} \right\}$, we have for all $T \geq 0$
		\begin{eqnarray*}
			\Lambda_T &\leq&  (1- \alpha)^{T} \Lambda_0  + \frac{ 4 L \stepsize^2 \sigma^2 }{ \alpha }  . 
		\end{eqnarray*}
		Fix $\varepsilon > 0$, and set the step-size in \eqref{eq:PSGD} as 
		$\stepsize = \min\left\{ \frac{2}{9 L} , \frac{\mu_c^2  \mu_H \varepsilon}{10 L \sigma^2 }  \right\}. $
		Then $\Lambda_T \leq \varepsilon$ after
		$	T = \wt \cO\rb{ \frac{ L }{\mu_c^2 \mu_H }  + \frac{ L \sigma^2 }{\mu_c^4  \mu_H^2 } \frac{1}{\varepsilon}  } $ iterations.
	\end{theorem}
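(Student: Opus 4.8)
The plan is to mirror the argument of \Cref{thm:subgrad_SC}, replacing the non-smooth descent estimate by its smooth counterpart \Cref{le:descent_like_Proj_SGD}. Concretely, I would first invoke \Cref{le:descent_like_Proj_SGD} with $\mu_H > 0$ and $\rho = 4L$, which already packages the interaction between the proximal-point tracking of Projected SGD and the hidden strong convexity (through \Cref{prop:HC}) into the one-step bound
\begin{equation*}
\Lambda_{t+1} \leq (1-\alpha)\Lambda_t + 4L\eta^2\sigma^2 + \left(\frac{3\alpha^2}{2\mu_c^2\eta} - \frac{(1-\alpha)\alpha\mu_H}{2}\right)\Exp{\sqnorm{c(\hat x^t) - c(x^*)}}.
\end{equation*}
The only structural difference from the subgradient case is that the additive error is now governed by the variance $\sigma^2$ (via \ref{A2'}) rather than the second-moment bound $G_F^2$, and the admissible step-size range is $\eta \leq 2/(9L)$.

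The key step is to eliminate the $\Exp{\sqnorm{c(\hat x^t) - c(x^*)}}$ term, which is exactly where hidden strong convexity ($\mu_H > 0$) is used: choosing $\alpha \leq \tfrac{\eta\mu_c^2\mu_H}{2}$ (together with $\alpha \leq 2\eta L$, required by \Cref{le:descent_like_Proj_SGD}) forces the bracketed coefficient to be non-positive, so the cross term can be dropped. This leaves the clean linear recursion $\Lambda_{t+1} \leq (1-\alpha)\Lambda_t + 4L\eta^2\sigma^2$, and unrolling the geometric sum $\sum_{k=0}^{T-1}(1-\alpha)^k \leq 1/\alpha$ yields the first claim $\Lambda_T \leq (1-\alpha)^T\Lambda_0 + \tfrac{4L\eta^2\sigma^2}{\alpha}$.

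For the complexity bound I would take $\alpha = \tfrac{\eta\mu_c^2\mu_H}{2}$ (the binding constraint) and the prescribed $\eta = \min\{\tfrac{2}{9L}, \tfrac{\mu_c^2\mu_H\varepsilon}{10L\sigma^2}\}$. The noise floor then becomes $\tfrac{4L\eta^2\sigma^2}{\alpha} = \tfrac{8L\eta\sigma^2}{\mu_c^2\mu_H}$, and the choice of $\eta$ makes this $\cO(\varepsilon)$; driving the transient term $(1-\alpha)^T\Lambda_0$ below $\varepsilon$ requires $T \geq \tfrac{1}{\alpha}\log(\Lambda_0/\varepsilon)$. Since $\tfrac{1}{\alpha} = \tfrac{2}{\eta\mu_c^2\mu_H} = \max\{\tfrac{9L}{\mu_c^2\mu_H}, \tfrac{20L\sigma^2}{\mu_c^4\mu_H^2\varepsilon}\}$, this gives $T = \wt\cO\!\left(\tfrac{L}{\mu_c^2\mu_H} + \tfrac{L\sigma^2}{\mu_c^4\mu_H^2}\tfrac{1}{\varepsilon}\right)$, as claimed.

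Given \Cref{le:descent_like_Proj_SGD}, essentially all the difficulty is already absorbed into that lemma, so the remaining argument is routine. The only genuinely delicate point is verifying the sign of the bracketed coefficient under the stated bound on $\alpha$ (an elementary but constant-sensitive inequality), after which the proof reduces to balancing the transient and variance-floor terms, exactly as in \Cref{thm:subgrad_SC}. It is worth noting that, in contrast to the hidden convex case (\Cref{thm:Proj_SGD_C}), the diameter $D_{\cU}$ does not enter here: strong convexity \emph{absorbs} the $\Exp{\sqnorm{c(\hat x^t) - c(x^*)}}$ term rather than merely bounding it, which is precisely what upgrades the $\varepsilon^{-3}$ rate to $\varepsilon^{-1}$.
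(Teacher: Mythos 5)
Your proposal is correct and follows essentially the same route as the paper: the paper's own proof of \Cref{thm:Proj_SGD_SC} consists exactly of invoking \Cref{le:descent_like_Proj_SGD} with $\mu_H>0$, choosing $\alpha \leq \min\left\{2\eta L, \tfrac{\eta\mu_c^2\mu_H}{2}\right\}$ to make the coefficient of $\Exp{\sqnorm{c(\hat x^t)-c(x^*)}}$ non-positive, and then unrolling the recursion and balancing $\eta$ as in \Cref{thm:subgrad_SC}, which is precisely your argument (including the observation that $D_{\cU}$ drops out). The one step you flag but do not carry out --- the constant-sensitive sign check on the bracketed coefficient --- is likewise left implicit in the paper, so your treatment matches theirs in both substance and level of detail.
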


\end{tcolorbox}

 \begin{proof}
The proof follows from \Cref{le:descent_like_Proj_SGD} using the same steps as in the proof of \Cref{thm:subgrad_SC}. 
 \end{proof}

Similarly to \cref{cor:subgrad_SC}, we can translate convergence in $\Lambda_T$ to the last iterate  convergence in terms of distance to the optimal solution. 
 \begin{corollary}\label{cor:Proj_SGD_SC}
	Let the assumptions of \Cref{thm:Proj_SGD_SC} hold and $x^T$ be the output of the method \eqref{eq:PSGD} after $T$ iterations (given by \Cref{thm:Proj_SGD_SC}). Then $ \Exp{ \sqnorm{x^T - x^*} } \leq \rb{ \frac{4}{\mu_H \mu_c^2} + \frac{1}{L} } \varepsilon $. 
	\end{corollary}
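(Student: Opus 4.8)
The plan is to follow the proof of \Cref{cor:subgrad_SC} almost verbatim, the only structural change being the value of the proximal parameter $\rho$. First I would reuse the key chain \eqref{eq:SC_implication}: since $H(\cdot)$ is $\mu_H$-strongly convex and $c(x^*)$ minimizes $H$ over $\cU$, the first-order optimality condition gives $F(\hat x^T) - F(x^*) = H(c(\hat x^T)) - H(c(x^*)) \geq \fr{\mu_H}{2}\sqnorm{c(\hat x^T) - c(x^*)}$, and then \ref{C2} upgrades this to $F(\hat x^T) - F(x^*) \geq \fr{\mu_H \mu_c^2}{2}\sqnorm{\hat x^T - x^*}$. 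This step relies only on strong convexity of $H$ and the hidden convexity modulus $\mu_c$; smoothness of $F$ plays no role, so the inequality transfers unchanged from the subgradient-method setting.

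Next I would split the distance to the optimum by Young's inequality, $\sqnorm{x^T - x^*} \leq 2\sqnorm{\hat x^T - x^*} + 2\sqnorm{\hat x^T - x^T}$, and bound the two terms separately. For the first, inverting the inequality above yields $\Exp{\sqnorm{\hat x^T - x^*}} \leq \fr{2}{\mu_H \mu_c^2}\Exp{F(\hat x^T) - F(x^*)}$. The crucial difference from \Cref{cor:subgrad_SC} is that here $\rho = 4L$ (rather than $\rho = 2\ell$), so by definition $\Lambda_T = \Exp{F(\hat x^T) - F(x^*) + 2L \sqnorm{\hat x^T - x^T}}$; since both summands are nonnegative, this gives simultaneously $\Exp{F(\hat x^T) - F(x^*)} \leq \Lambda_T$ and $\Exp{\sqnorm{\hat x^T - x^T}} \leq \Lambda_T / (2L)$.

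Combining the pieces, $\Exp{\sqnorm{x^T - x^*}} \leq \fr{4}{\mu_H \mu_c^2}\Exp{F(\hat x^T) - F(x^*)} + 2\Exp{\sqnorm{\hat x^T - x^T}} \leq \rb{\fr{4}{\mu_H \mu_c^2} + \fr{1}{L}}\Lambda_T$, whereupon the conclusion follows from $\Lambda_T \leq \varepsilon$ supplied by \Cref{thm:Proj_SGD_SC}. I do not expect any genuine obstacle here: the argument is a mechanical adaptation of the subgradient-method corollary, and the only point requiring care is tracking the constant induced by $\rho = 4L$, which converts the $2/\ell$ term appearing in \Cref{cor:subgrad_SC} into the $1/L$ term claimed here.
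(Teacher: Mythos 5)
Your proposal is correct and matches the paper's intended argument exactly: the paper gives no separate proof for \Cref{cor:Proj_SGD_SC}, deferring to the analogy with \Cref{cor:subgrad_SC}, and your adaptation—reusing \eqref{eq:SC_implication}, the Young's-inequality split, and replacing $\rho = 2\ell$ by $\rho = 4L$ so that $2\,\Exp{\sqnorm{\hat x^T - x^T}} \leq \Lambda_T/L$—is precisely that argument with the constants tracked correctly.
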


\Cref{thm:Proj_SGD_SC} and \Cref{cor:Proj_SGD_SC} imply that if $\mu_H >0$, Projected SGD converges linearly in deterministic setting (when $\sigma = 0$) and achieves $\wt \cO(\varepsilon^{-1})$ sample complexity in the stochastic setting. This means that compared to the special case of strongly convex optimization, the above rates have the same dependence on $\varepsilon$ (up to a logarithmic factor) \cite{stich2019unified,lan2020first}.

	\section{Projected SGD with Momentum}\label{sec:PSGDM}
    We observe that the previous section only guarantees convergence of $\Lambda_t$, however, this might not directly imply the convergence on the original function $F(\cdot)$ since $\Phi_{1/\rho}(x) \leq F(x)$ for any $x\in \cX$. It is known that in convex optimization, momentum is often helpful to establish the last iterate convergence, see e.g., \cite{Li_On_Last_Iterate_Mom_2022,Sebbouh_AS_Conv_SHB_2021}. Motivated by this, we consider Projected SGD with Polyak's (heavy-ball) momentum \cite{polyak64_HB} in the smooth setting. We show that with extra momentum step, we can establish last-iterate convergence to an $\varepsilon$-optimal solution. The Projected SGD with momentum admits the following updates: 

	\begin{tcolorbox}[colback=white,colframe=gray!75!black]
	\begin{align}
		x^{t+1} = \Pi_{\mathcal{X}}(x^t-\stepsize \, g^t) , \qquad  \label{eq:PSGDM_x_upd}
		g^{t+1} = (1-\momentum) \, g^{t} + \momentum \, \nabla f(x^{t+1}, \xi^{t+1}) . 
\end{align}
\end{tcolorbox}

Our analysis in this section uses the same properties presented in \Cref{subsec:key_ineq}, but the Lyapunov function used here is completely different from $\Lambda_t$ used in \Cref{sec:subgradient_method,sec:PSGD}. 
Let $x^* \in \cX^*$, for any $x^t \in \cX$, we define the Lyapunov function
\begin{equation}\label{eq:Lyapunov_HB}
	\Lambda_t^{HB} := \Exp{ F(x^t) - F(x^*) + \frac{\stepsize}{\momentum} \sqnorm{g^t - \nabla F(x^t)} } .
\end{equation}

The following lemma controls the error between the momentum gradient estimator $g^t$ and the true gradient $\nabla F(x^t)$. Similar recursive error control was previously used in general non-convex optimization, e.g., in \cite{cutkosky2019momentum,cutkosky_nsgdm_2020,fatkhullin2023momentum}.
 
	\begin{lemma}[Lemma 2 in \cite{fatkhullin2023momentum}]\label{le:key_HB_recursion}
		Let $\momentum \in (0,1]$ and $g^t$ be updated via \eqref{eq:PSGDM_x_upd}. Then
		\begin{eqnarray}\notag 
			\Exp{ \sqnorm{g^{t+1} - \nabla F(x^{t+1})} }  \leq (1 - \momentum) \Exp{ \sqnorm{  g^{t} - \nabla F(x^{t}) } }  + \fr{3 L^2}{\momentum} \Exp{ \sqnorm{  x^{t} - x^{t+1}  } } + \momentum^2 \sigma^2 .
		\end{eqnarray}
	\end{lemma}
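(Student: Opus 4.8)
The plan is to expand the momentum recursion \eqref{eq:PSGDM_x_upd} around the true gradient, split the resulting error into a part that is measurable given the past and a conditionally zero-mean stochastic part, and then bound each piece using unbiasedness, the variance bound \ref{A2'}, the $L$-smoothness \ref{A1'}, and one carefully tuned Young's inequality. First I would set $e^{t+1} := g^{t+1} - \nabla F(x^{t+1})$ and, by adding and subtracting $\nabla F(x^t)$ and $\nabla F(x^{t+1})$ in the update, write
\[
e^{t+1} = \underbrace{(1-\momentum)\rb{ g^t - \nabla F(x^t) } + (1-\momentum)\rb{ \nabla F(x^t) - \nabla F(x^{t+1}) }}_{a^t} + \underbrace{\momentum\rb{ \nabla f(x^{t+1},\xi^{t+1}) - \nabla F(x^{t+1}) }}_{b^t}.
\]
Let $\mathcal{F}_t$ be the $\sigma$-algebra generated by all the randomness up to the computation of $x^{t+1}$; since $x^{t+1} = \Pi_{\cX}(x^t - \stepsize g^t)$ introduces no fresh randomness, the term $a^t$ is $\mathcal{F}_t$-measurable, while \ref{A2'} gives $\Exp{b^t \mid \mathcal{F}_t} = 0$ and $\Exp{\sqnorm{b^t}\mid\mathcal{F}_t} \le \momentum^2\sigma^2$.

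The key step is the bias--variance decomposition: because $a^t$ is $\mathcal{F}_t$-measurable and $b^t$ is conditionally mean-zero, the cross term vanishes and
\[
\Exp{ \sqnorm{e^{t+1}} \mid \mathcal{F}_t } = \sqnorm{a^t} + \Exp{\sqnorm{b^t}\mid\mathcal{F}_t} \le \sqnorm{a^t} + \momentum^2\sigma^2.
\]
It then remains to control $\sqnorm{a^t} = (1-\momentum)^2 \sqnorm{ (g^t - \nabla F(x^t)) + (\nabla F(x^t) - \nabla F(x^{t+1})) }$. Applying $\sqnorm{u+v}\le(1+c)\sqnorm{u}+(1+1/c)\sqnorm{v}$ with $c = \momentum/(1-\momentum)$ makes the first coefficient collapse to $(1-\momentum)^2(1+c) = 1-\momentum$ and the second to $(1-\momentum)^2(1+1/c) = (1-\momentum)^2/\momentum \le 1/\momentum$; combined with the smoothness bound $\sqnorm{\nabla F(x^t)-\nabla F(x^{t+1})} \le L^2\sqnorm{x^t - x^{t+1}}$, this yields $\sqnorm{a^t} \le (1-\momentum)\sqnorm{g^t-\nabla F(x^t)} + \fr{L^2}{\momentum}\sqnorm{x^t-x^{t+1}}$. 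The degenerate case $\momentum = 1$ is immediate since then $a^t = 0$. Taking total expectations via the tower property and noting $L^2/\momentum \le 3L^2/\momentum$ recovers the claimed recursion (the factor $3$ in the statement is harmless slack).

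The main obstacle, though technical rather than conceptual, lies in the last paragraph's bookkeeping: one must pick the Young parameter $c$ so that the contraction factor is \emph{exactly} $1-\momentum$, and not a weaker $(1-\momentum)^2$ or $1-\momentum/2$, since this precise rate is what later drives the Lyapunov descent of $\Lambda_t^{HB}$. A secondary point requiring care is the conditioning: one must verify that $x^{t+1}$ carries no randomness beyond $\mathcal{F}_t$, so that $b^t$ is genuinely conditionally zero-mean and the cross term truly vanishes; otherwise the variance term would not cleanly separate from the deterministic contraction.
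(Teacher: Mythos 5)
Your proof is correct and takes essentially the same approach as the paper's: the same decomposition of $g^{t+1}-\nabla F(x^{t+1})$ into an $\mathcal{F}_t$-measurable part plus the conditionally mean-zero noise $\momentum\rb{\nabla f(x^{t+1},\xi^{t+1})-\nabla F(x^{t+1})}$, the same bias--variance split killing the cross term, then Young's inequality and $L$-smoothness. The only difference is your Young parameter $c=\momentum/(1-\momentum)$ versus the paper's $\momentum/2$, which gives you the slightly tighter constant $L^2/\momentum$ in place of $3L^2/\momentum$, as you correctly note.
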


The following result is the key to derive global convergence guarantee for Projected SGD with momentum under hidden convexity. 

\begin{theorem}\label{le:descent_HB_HC}
	Suppose that \ref{C1}, \ref{C2}, \ref{A1'}, \ref{A2'} hold with $\mu_H\geq0$, and the step-size in \eqref{eq:PSGDM_x_upd} satisfies $\stepsize \leq 1 / L$. For any $\alpha \in [0, 1] $, it holds that
	\begin{eqnarray}\label{eq:descent_HC}
		F(x^{t+1})  &\leq&  (1- \alpha) F(x^t) +  \alpha F(x^*)  + \rb{  \fr{ \alpha^{2} }{ \mu_c^2 \stepsize } -  \fr{(1-\alpha) \alpha  \mu_H}{2} } \sqnorm{ c(x^t) - c(x^*) },  \notag \\
		&& \qquad - \rb{ \fr{1}{2 \stepsize } - \fr{L}{2} } \sqnorm{x^{t+1}-x^t } + \frac{\stepsize}{2} \sqnorm{g^t - \nabla F(x^t) } .
	\end{eqnarray}
\end{theorem}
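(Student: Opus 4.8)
The plan is to prove \eqref{eq:descent_HC} as a deterministic (pointwise) one-step bound, by combining the two one-sided consequences of $L$-smoothness of $F$ with the three-point inequality of the projection step and the hidden-convexity estimates of \Cref{prop:HC}. Throughout I fix the comparison point $x_\alpha^t := c^{-1}\rb{(1-\alpha) c(x^t) + \alpha c(x^*)}$, which lies in $\cX$ since $\cU$ is convex and $c^{-1}$ maps $\cU$ into $\cX$; note that \Cref{prop:HC} supplies both $F(x_\alpha^t) \le (1-\alpha)F(x^t) + \alpha F(x^*) - \fr{(1-\alpha)\alpha\mu_H}{2}\sqnorm{c(x^t)-c(x^*)}$ and $\sqnorm{x_\alpha^t - x^t} \le \fr{\alpha^2}{\mu_c^2}\sqnorm{c(x^t)-c(x^*)}$.

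First I would start from the smoothness upper bound $F(x^{t+1}) \le F(x^t) + \ve{\nabla F(x^t)}{x^{t+1}-x^t} + \fr{L}{2}\sqnorm{x^{t+1}-x^t}$ and split the inner product along $x^{t+1}-x^t = (x^{t+1}-x_\alpha^t) + (x_\alpha^t - x^t)$. The summand $\ve{\nabla F(x^t)}{x_\alpha^t - x^t}$ is the one that has to generate a function value at $x_\alpha^t$; since $F$ is not convex I would instead invoke the \emph{reverse} smoothness inequality $F(x_\alpha^t) \ge F(x^t) + \ve{\nabla F(x^t)}{x_\alpha^t - x^t} - \fr{L}{2}\sqnorm{x_\alpha^t - x^t}$, i.e.\ $\ve{\nabla F(x^t)}{x_\alpha^t - x^t} \le F(x_\alpha^t) - F(x^t) + \fr{L}{2}\sqnorm{x_\alpha^t - x^t}$. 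For the remaining summand I would write $\nabla F(x^t) = g^t + (\nabla F(x^t) - g^t)$: the gradient part is bounded by the three-point inequality for \eqref{eq:PSGDM_x_upd} (as $x^{t+1}$ minimizes the $\fr1\eta$-strongly convex map $x \mapsto \ve{g^t}{x} + \fr{1}{2\eta}\sqnorm{x - x^t}$ over $\cX$), giving $\ve{g^t}{x^{t+1}-x_\alpha^t} \le \fr{1}{2\eta}\sqnorm{x_\alpha^t - x^t} - \fr{1}{2\eta}\sqnorm{x^{t+1}-x^t} - \fr{1}{2\eta}\sqnorm{x_\alpha^t - x^{t+1}}$, while the error part is handled by Young's inequality, $\ve{\nabla F(x^t)-g^t}{x^{t+1}-x_\alpha^t} \le \fr{\eta}{2}\sqnorm{g^t - \nabla F(x^t)} + \fr{1}{2\eta}\sqnorm{x^{t+1}-x_\alpha^t}$.

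The decisive point is that the $+\fr{1}{2\eta}\sqnorm{x^{t+1}-x_\alpha^t}$ produced by Young exactly cancels the $-\fr{1}{2\eta}\sqnorm{x_\alpha^t - x^{t+1}}$ from the three-point inequality, so no residual distance to the comparison point remains. Collecting the three estimates into the smoothness bound yields
\[
F(x^{t+1}) \le F(x_\alpha^t) + \rb{\fr{L}{2} + \fr{1}{2\eta}}\sqnorm{x_\alpha^t - x^t} - \rb{\fr{1}{2\eta} - \fr{L}{2}}\sqnorm{x^{t+1}-x^t} + \fr{\eta}{2}\sqnorm{g^t - \nabla F(x^t)},
\]
and the step-size restriction $\eta \le 1/L$ gives $\fr{L}{2}+\fr{1}{2\eta} \le \fr1\eta$. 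Substituting the two \Cref{prop:HC} estimates for $F(x_\alpha^t)$ and $\sqnorm{x_\alpha^t - x^t}$ and merging the two $\sqnorm{c(x^t)-c(x^*)}$ coefficients into $\fr{\alpha^2}{\mu_c^2\eta} - \fr{(1-\alpha)\alpha\mu_H}{2}$ reproduces \eqref{eq:descent_HC} exactly.

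I expect the only real obstacle to be conceptual rather than computational: since $F$ is genuinely non-convex, the usual convexity bound $\ve{\nabla F(x^t)}{x_\alpha^t - x^t} \le F(x_\alpha^t)-F(x^t)$ is unavailable, and the whole argument hinges on replacing it by the reverse smoothness inequality (paying an extra $\fr{L}{2}\sqnorm{x_\alpha^t-x^t}$, which is affordable precisely because $\eta\le 1/L$) and on choosing the splitting so that the comparison-point distance cancels. Everything else is bookkeeping; in particular the momentum does not appear in this lemma --- the estimator error $\fr{\eta}{2}\sqnorm{g^t-\nabla F(x^t)}$ is simply carried forward and will be absorbed later via \Cref{le:key_HB_recursion} into the Lyapunov function $\Lambda_t^{HB}$.
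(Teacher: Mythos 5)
Your proposal is correct and takes essentially the same route as the paper's proof: both combine the one-step smoothness bound with the three-point inequality for the projection step, Young's inequality arranged so that the $\fr{1}{2\stepsize}\sqnorm{x^{t+1}-x_\alpha^t}$ term cancels exactly, the reverse smoothness inequality to generate $F(x_\alpha^t)$, and \Cref{prop:HC} applied at $z = x_\alpha^t$ under $\stepsize \leq 1/L$. The only difference is bookkeeping --- you split the inner product along $x_\alpha^t$ at the outset, whereas the paper inserts $g^t$ into the smoothness bound and regroups afterwards --- and the two derivations produce the identical intermediate estimate before substituting the hidden-convexity bounds.
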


\begin{proof}
	By the update rule of $x^{t+1}$ and following the standard descent inequality (cf. \cref{le:bregman_properties_optimality}), we have for any $z \in \cX$ that
	\begin{eqnarray}\label{eq:projection_upd}
		\langle g^t , x^{t+1} - z \rangle  + \frac{1}{2\stepsize} \sqnorm{ x^{t+1} - x^t} \leq \frac{1}{2\stepsize} \sqnorm{ z - x^t } -\frac{1}{2\stepsize} \sqnorm{ z - x^{t+1} }  .
	\end{eqnarray}
	
	By the smoothness of $F(\cdot)$, we derive
	\begin{eqnarray}
		F(x^{t+1}) &\leq& F(x^t) + \langle \nabla F(x^t), x^{t+1}- x^t \rangle + \fr{L}{2} \sqnorm{x^{t+1}-x^t } \notag \\
		&=& F(x^t) + \langle g^t, x^{t+1}- x^t \rangle + \fr{1}{2 \stepsize } \sqnorm{x^{t+1}-x^t } \notag \\
		&& \qquad  + \langle \nabla F(x^t) - g^t , x^{t+1}- x^t \rangle - \rb{ \fr{1}{2 \stepsize } - \fr{L}{2} } \sqnorm{x^{t+1}-x^t } \notag \\
		&\overset{(i)}{\leq}&  F(x^t) + \langle g^t, z - x^t \rangle + \fr{1}{2 \stepsize } \sqnorm{z - x^t } - \fr{1}{2 \stepsize } \sqnorm{z - x^{t+1} }  \notag \\
		&& \qquad  + \langle \nabla F(x^t) - g^t , x^{t+1}- x^t \rangle - \rb{ \fr{1}{2 \stepsize } - \fr{L}{2} } \sqnorm{x^{t+1}-x^t }   \notag \\
		& { = } & F(x^t) + \langle \nabla F(x^t), z - x^t \rangle + \fr{1}{2 \stepsize } \sqnorm{ z - x^t } - \fr{1}{2 \stepsize } \sqnorm{ z - x^{t+1} }  \notag \\  
		&& \qquad + \langle \nabla F(x^t) - g^t , x^{t+1}- z \rangle - \rb{ \fr{1}{2 \stepsize } - \fr{L}{2} } \sqnorm{x^{t+1}-x^t } \notag  \\
		& \overset{(ii)}{ \leq } & F(x^t) + \langle \nabla F(x^t), z - x^t \rangle + \fr{1}{2 \stepsize } \sqnorm{z - x^t }   + \frac{\stepsize}{2} \sqnorm{g^t - \nabla F(x^t) }  \notag \\
		&& \qquad - \rb{ \fr{1}{2 \stepsize } - \fr{L}{2} } \sqnorm{x^{t+1}-x^t } \notag  \\
		&\overset{(iii)}{\leq}& F(z) + \fr{L}{2}\sqnorm{ z - x^t } + \fr{1}{2 \stepsize } \sqnorm{z - x^t }  + \frac{\stepsize}{2} \sqnorm{g^t - \nabla F(x^t) }  \notag \\
		&& \qquad - \rb{ \fr{1}{2 \stepsize } - \fr{L}{2} } \sqnorm{x^{t+1}-x^t } \notag  ,
	\end{eqnarray}
	where $(i)$ follows from \eqref{eq:projection_upd}, $(ii)$ holds by Young's inequality, i.e., $\langle a, b\rangle \leq \frac{\stepsize}{2}\sqnorm{a} + \frac{1}{2 \stepsize} \sqnorm{b} $ with $a = \nabla F(x^t) - g^t$, $b = x^{t+1} - z $, $(iii)$ holds by the smoothness of $F(\cdot)$, i.e., $-\frac{L}{2}\sqnorm{z - x^t } \leq F(x^t) - F(z) - \langle \nabla F(x^t) , z - x^t \rangle $. 
	
	We are now ready to utilize the properties of hidden convex functions to bound $F(z)$ and $\sqnorm{z - x^t}$ for some specific choice of $z \in \cX$. We select $z := x_{\alpha}^t = c^{-1}((1-{\alpha})c(x^t)+ {\alpha} c(x^*)) \in \cX$, for some $\alpha \in [0,1]$, and $x^* \in \cX^* $. By \Cref{prop:HC}, we have for $\mu_H \geq 0$, 
	$$
	F(z) \leq (1- \alpha) F(x^t) +  \alpha F(x^*) -    \fr{(1-\alpha) \alpha  \mu_H}{2} \sqnorm{ c(x^t) - c(x^*) } ,
	$$
 and
	$$
	\sqnorm{z - x^t } \leq \fr{ \alpha^{2} }{ \mu_c^2  } \sqnorm{ c(x^t) - c(x^*) }.
	$$
	Combining the three inequalities above  and utilizing the assumption $\stepsize \leq 1/L$, we complete the proof.
\end{proof}

 \subsection{Hidden Convex Setting} 
	Combining \cref{le:descent_HB_HC} with \cref{le:key_HB_recursion}, we obtain the following theorem. 
 
\begin{tcolorbox}[colback=gray!5!white,colframe=gray!75!black]
	
\begin{theorem}\label{thm:SGDM}
	Let \ref{C1}, \ref{C2}, \ref{A1'}, \ref{A2'} hold with $\mu_H = 0$, and the set $\cU$ be bounded by a diameter $D_{\cU}$. Then for any $\stepsize \leq \frac{\momentum}{4 L}$, $\momentum \in (0, 1]$, and $\alpha \leq  \frac{\momentum}{2} $, we have for any $T \geq 0$
	$$
	\Lambda_{T}^{\text{HB}} \leq (1 -  \alpha )^T \Lambda_0^{\text{HB}}  + \fr{ \alpha D_{\cU}^2}{ \mu_c^2   \stepsize }   +  \fr{\momentum \stepsize  \sigma^2 }{  \alpha  } ,
	$$
	where $\Lambda_t^{\text{HB}}$ is given by \eqref{eq:Lyapunov_HB}.
	Fix $\varepsilon > 0$, and set the parameters of algorithm \eqref{eq:PSGDM_x_upd} as 
	$$
	\stepsize = \frac{\momentum}{4 L}, \quad \beta = \min\left\{1, \frac{\mu_c^2  }{9 D_{\cU}^2 \sigma^2} \varepsilon^2 \right\}  .
	$$
	Then the scheme \eqref{eq:PSGDM_x_upd} returns a point $x^T\in\cX$ with $\Exp{F(x^{T}) - F(x^*) } \leq \varepsilon$ when
	\begin{eqnarray} \notag
	  	T = \wt \cO\rb{  \frac{L D_{\cU}^2 }{\mu_c^2  } \frac{1}{\varepsilon} + \frac{ L D_{\cU}^4 \sigma^2 }{\mu_c^4  } \frac{1}{\varepsilon^3}  } . 
	\end{eqnarray}
\end{theorem}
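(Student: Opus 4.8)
The plan is to couple the one-step descent inequality \Cref{le:descent_HB_HC} (specialized to $\mu_H = 0$) with the momentum error recursion \Cref{le:key_HB_recursion}, choosing the coupling weight so that the two pieces reassemble into a contraction of the Lyapunov function $\Lambda_t^{\text{HB}}$ from \eqref{eq:Lyapunov_HB}. First I would take expectations in \Cref{le:descent_HB_HC}, set $\mu_H=0$, subtract $F(x^*)$, and bound $\sqnorm{c(x^t)-c(x^*)} \leq D_{\cU}^2$ using boundedness of $\cU$; this leaves a clean descent inequality for $\Exp{F(x^t)-F(x^*)}$ carrying a negative displacement term $-\rb{\frac{1}{2\stepsize}-\frac{L}{2}}\Exp{\sqnorm{x^{t+1}-x^t}}$ and a positive gradient-error term $\frac{\stepsize}{2}\Exp{\sqnorm{g^t-\nabla F(x^t)}}$. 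I would then add to it the recursion of \Cref{le:key_HB_recursion} weighted by $\stepsize/\momentum$, which is precisely the weight attached to the error term inside $\Lambda_t^{\text{HB}}$.

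The heart of the argument is the bookkeeping of the two surviving families of terms. The weighted error recursion contributes $\frac{\stepsize}{\momentum}(1-\momentum)\Exp{\sqnorm{g^t-\nabla F(x^t)}}$, and together with the $\frac{\stepsize}{2}$ term from the descent inequality this gives coefficient $\frac{\stepsize}{\momentum}-\frac{\stepsize}{2}$, which is at most $(1-\alpha)\frac{\stepsize}{\momentum}$ exactly when $\alpha \leq \momentum/2$ -- matching the hypothesis and folding the gradient error into the contracted potential. The second family, $\Exp{\sqnorm{x^{t+1}-x^t}}$, is where I expect the main obstacle: the descent inequality supplies the negative coefficient $-\rb{\frac{1}{2\stepsize}-\frac{L}{2}}$ while the weighted error recursion supplies the positive $\frac{3L^2\stepsize}{\momentum^2}$, and one must verify that the net coefficient is nonpositive, i.e. $\frac{3L^2\stepsize}{\momentum^2} \leq \frac{1}{2\stepsize}-\frac{L}{2}$. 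A direct computation confirms this under $\stepsize \leq \momentum/(4L)$ and $\momentum \leq 1$, which is why tying the step-size to $\momentum$ is essential. Discarding the displacement term then leaves only the variance term $\momentum\stepsize\sigma^2$, yielding the one-step recursion
\begin{equation*}
\Lambda_{t+1}^{\text{HB}} \leq (1-\alpha)\Lambda_t^{\text{HB}} + \frac{\alpha^2 D_{\cU}^2}{\mu_c^2\stepsize} + \momentum\stepsize\sigma^2 .
\end{equation*}

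To finish, I would unroll this recursion and bound the geometric sum $\sum_{k=0}^{T-1}(1-\alpha)^k \leq 1/\alpha$, producing exactly the stated bound $\Lambda_T^{\text{HB}} \leq (1-\alpha)^T\Lambda_0^{\text{HB}} + \frac{\alpha D_{\cU}^2}{\mu_c^2\stepsize} + \frac{\momentum\stepsize\sigma^2}{\alpha}$. For the sample complexity, the crucial observation is that $\Exp{F(x^T)-F(x^*)} \leq \Lambda_T^{\text{HB}}$, since the squared-error term in \eqref{eq:Lyapunov_HB} is nonnegative; hence it suffices to drive $\Lambda_T^{\text{HB}} \leq \varepsilon$. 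With $\stepsize = \momentum/(4L)$ I would pick $\alpha$ as large as allowed, namely $\alpha \asymp \min\{\momentum,\ \varepsilon\mu_c^2\momentum/(LD_{\cU}^2)\}$, so that the middle term is $\cO(\varepsilon)$; the choice $\momentum = \min\{1,\ \mu_c^2\varepsilon^2/(9D_{\cU}^2\sigma^2)\}$ then forces the variance term $\momentum\stepsize\sigma^2/\alpha$ to be $\cO(\varepsilon)$ as well. Reading off $T = \wt\cO(1/\alpha)$ and splitting into the low-variance regime ($\momentum=1$, contributing the $\varepsilon^{-1}$ term) and the high-variance regime ($\momentum \asymp \varepsilon^2$, contributing the $\varepsilon^{-3}$ term) yields the claimed $T = \wt\cO\rb{\frac{LD_{\cU}^2}{\mu_c^2}\frac{1}{\varepsilon} + \frac{LD_{\cU}^4\sigma^2}{\mu_c^4}\frac{1}{\varepsilon^3}}$.
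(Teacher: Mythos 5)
Your proposal is correct and takes essentially the same route as the paper's proof: the identical coupling of \Cref{le:descent_HB_HC} (with $\mu_H=0$ and $\sqnorm{c(x^t)-c(x^*)}\leq D_{\cU}^2$) with the $\nicefrac{\stepsize}{\momentum}$-weighted recursion of \Cref{le:key_HB_recursion}, the same coefficient bookkeeping under $\alpha \leq \momentum/2$ and $\stepsize \leq \momentum/(4L)$, and the same unrolling, geometric-sum bound, and parameter tuning. The only cosmetic deviation is that the paper's choice of $\alpha$ includes a third entry $\frac{\sigma\mu_c}{4LD_{\cU}}\momentum^{3/2}$ to control the variance term explicitly, whereas you fold that control into the choice of $\momentum$; both give the same $\wt\cO$ complexity.
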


\end{tcolorbox}


\begin{proof} 
	By \Cref{le:descent_HB_HC}, subtracting $F(x^*)$ from both sides of  \eqref{eq:descent_HC}, setting $\mu_H=0$, and taking the expectation, we have for any $\stepsize \leq 1/L$ that
	\begin{eqnarray*}
		\Exp{F(x^{t+1}) - F(x^*)}  &\leq&  (1-  \alpha) \Exp{ F(x^t)  - F(x^*) }  +  \fr{ \alpha^{2} }{ \mu_c^2 \stepsize } \Exp{ \sqnorm{ c(x^t) - c(x^*) } } \notag \\
		&& \qquad - \rb{ \fr{1}{2 \stepsize } - \fr{L}{2} } \Exp{\sqnorm{x^{t+1}-x^t }} + \frac{\stepsize}{2} \Exp{\sqnorm{g^t - \nabla F(x^t) }} \\
		&\leq&  (1-  \alpha) \Exp{ F(x^t)  - F(x^*) }  +  \fr{ \alpha^{2} D_{\cU}^2  }{ \mu_c^2 \stepsize }  \notag \\
		&& \qquad - \rb{ \fr{1}{2 \stepsize } - \fr{L}{2} } \Exp{\sqnorm{x^{t+1}-x^t }} + \frac{\stepsize}{2} \Exp{\sqnorm{g^t - \nabla F(x^t) }} ,
	\end{eqnarray*}
	where the second inequality uses boundedness of  $\cU$.
	

Summing up the inequality  above with a $\frac{\stepsize}{\momentum}$ multiple of the result of \Cref{le:key_HB_recursion}, we recognize the Lyapunov function $\Lambda_t^{\text{HB}} $ defined in \eqref{eq:Lyapunov_HB}, and derive
	\begin{eqnarray*}
		\Lambda_{t+1}^{\text{HB}} &\leq& \Lambda_t^{\text{HB}} -  \alpha \Exp{ F(x^t)  - F(x^*) }   - \frac{\stepsize}{2} \Exp{\sqnorm{g^t - \nabla F(x^t)}} \\
		&&\qquad  + \fr{ \alpha^{2} D_{\cU}^2 }{ \mu_c^2 \stepsize }   - \rb{ \fr{1}{2 \stepsize } - \fr{L}{2} - \frac{3 L^2 \stepsize }{\momentum^2} } \Exp{\sqnorm{x^{t+1}-x^t }} + \momentum \stepsize \sigma^2 \\
		&\leq& (1 -  \alpha) \Lambda_t^{\text{HB}} + \fr{ \alpha^{2} D_{\cU}^2 }{ \mu_c^2 \stepsize }  + \momentum \stepsize \sigma^2 ,
	\end{eqnarray*}
	where the last step holds for $ \alpha \leq \momentum / 2$ and $\stepsize \leq \frac{\momentum}{4 L}$. Unrolling the recursion from $t = 0$ to $t = T-1$ and choosing $\stepsize = \fr{\momentum}{ 4 L }$, we obtain
	\begin{eqnarray}
		\Lambda_{T}^{\text{HB}} &\leq& (1 -  \alpha )^T \Lambda_0^{\text{HB}}  + \fr{ \alpha D_{\cU}^2}{ \mu_c^2   \stepsize }   +  \fr{\momentum \stepsize  \sigma^2 }{  \alpha  } \notag \\
		&\leq& (1 -  \alpha )^T \Lambda_0^{\text{HB}}  + \fr{ 4 L  D_{\cU}^2  }{ \mu_c^2  } \frac{\alpha}{\momentum}  +  \fr{ \sigma^2 }{ 4 L   } \frac{\momentum^2}{\alpha} \leq \varepsilon  \notag ,
	\end{eqnarray}
	where the last inequality holds by setting $\alpha =  \min\left\{ \frac{\momentum}{2}, \frac{3 \mu_c^2   }{2 L D_{\cU}^2} \momentum \varepsilon , \frac{\sigma \mu_c }{4 L D_{\cU}} \beta^{\frac{3}{2}} \right\}$ , $\beta = \min\left\{1, \frac{\mu_c^2  }{9 D_{\cU}^2 \sigma^2} \varepsilon^2 \right\} $, and the number of iterations as 
	$$
	T = \frac{1}{ \alpha} \log\rb{\frac{3 \Lambda_0^{\text{HB}}}{\varepsilon} } = \wt \cO\rb{ \frac{L D_{\cU}^2 }{\mu_c^2  } \frac{1}{\varepsilon}  + \frac{ L D_{\cU}^4 \sigma^2 }{\mu_c^4 } \frac{1}{\varepsilon^3}  } .
	$$
\end{proof}

\subsection{Hidden Strongly Convex Setting}

We conclude the section with the improved result for Projected SGD with momentum under hidden strong convexity.

\begin{tcolorbox}[colback=gray!5!white,colframe=gray!75!black]

\begin{theorem}\label{thm:SGDM_SC}
	Let \ref{C1}, \ref{C2}, \ref{A1'}, \ref{A2'} hold with $\mu_H > 0$. Then for any $\stepsize \leq \frac{\momentum}{4 L}$, $\momentum \in (0, 1]$, and $\alpha \leq  \min\left\{ \frac{\momentum}{2} , \frac{\mu_c^2 \mu_H \stepsize}{4} \right\} $, we have for any $T \geq 0$
	$$
	\Lambda_{T}^{\text{HB}} \leq (1 -  \alpha )^T \Lambda_0^{\text{HB}}  +  \fr{\momentum \stepsize  \sigma^2 }{  \alpha  } ,
	$$
	where $\Lambda_t^{\text{HB}}$ is given by \eqref{eq:Lyapunov_HB}.
	Fix $\varepsilon > 0$, and set the parameters of algorithm \eqref{eq:PSGDM_x_upd} as 
	$$
	\stepsize = \frac{\momentum}{4 L}, \quad \beta = \min\left\{1, \frac{\mu_c^2 \mu_H }{8 \sigma^2} \varepsilon \right\}  .
	$$
	Then the scheme \eqref{eq:PSGDM_x_upd} returns a point $x^T\in\cX$ with $\Exp{F(x^{T}) - F(x^*) } \leq \varepsilon$ after
	\begin{eqnarray} \notag
		T = \wt \cO\rb{  \frac{L }{\mu_c^2 \mu_H  }  + \frac{ L \sigma^2 }{\mu_c^4 \mu_H^2 } \frac{1}{\varepsilon}  } . 
	\end{eqnarray}
\end{theorem}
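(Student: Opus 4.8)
The plan is to mirror the proof of \Cref{thm:SGDM}, but to exploit hidden strong convexity exactly as in \Cref{thm:subgrad_SC}: instead of bounding $\sqnorm{c(x^t)-c(x^*)}$ by the diameter $D_{\cU}^2$, the goal is to annihilate this term entirely by forcing its coefficient to be non-positive. This single modification is what removes the $D_{\cU}$ dependence and upgrades the $\cO(\varepsilon^{-3})$ noise term into $\cO(\varepsilon^{-1})$, so the entire improvement hinges on one sign analysis.

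Concretely, I would start from the one-step inequality \eqref{eq:descent_HC} in \Cref{le:descent_HB_HC} with $\mu_H>0$, subtract $F(x^*)$, and take expectations. The factor multiplying $\sqnorm{c(x^t)-c(x^*)}$ is $\frac{\alpha^2}{\mu_c^2\stepsize} - \frac{(1-\alpha)\alpha\mu_H}{2} = \alpha\left(\frac{\alpha}{\mu_c^2\stepsize} - \frac{(1-\alpha)\mu_H}{2}\right)$. The key step is to note that the hypothesis $\alpha \leq \frac{\mu_c^2\mu_H\stepsize}{4}$ yields $\frac{\alpha}{\mu_c^2\stepsize} \leq \frac{\mu_H}{4}$, while $\alpha \leq \beta/2 \leq 1/2$ gives $1-\alpha \geq 1/2$ and hence $\frac{(1-\alpha)\mu_H}{2} \geq \frac{\mu_H}{4}$; therefore this coefficient is $\leq 0$ and the whole $\sqnorm{c(x^t)-c(x^*)}$ term is discarded.

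Next I would add $\frac{\stepsize}{\momentum}$ times the momentum error recursion of \Cref{le:key_HB_recursion} and recognize the Lyapunov function $\Lambda_t^{\text{HB}}$ from \eqref{eq:Lyapunov_HB}, just as in \Cref{thm:SGDM}. Two routine checks then close the contraction. First, the gradient-error term contracts at rate $1-\alpha$ provided $\frac{\stepsize}{2} + (1-\beta)\frac{\stepsize}{\momentum} \leq (1-\alpha)\frac{\stepsize}{\momentum}$, which after dividing by $\stepsize$ and multiplying by $\momentum$ is exactly $\alpha \leq \beta/2$. Second, the displacement term $\sqnorm{x^{t+1}-x^t}$ can be dropped because its net coefficient $\frac{1}{2\stepsize} - \frac{L}{2} - \frac{3L^2\stepsize}{\momentum^2}$ is non-negative whenever $\stepsize \leq \frac{\momentum}{4L}$ and $\beta \leq 1$ (at $\stepsize = \frac{\momentum}{4L}$ this reads $\frac{2L}{\beta} \geq \frac{L}{2} + \frac{3L}{4\beta}$, i.e.\ $\frac{5L}{4\beta} \geq \frac{L}{2}$, true since $\beta \leq 1$). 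This yields the clean recursion $\Lambda_{t+1}^{\text{HB}} \leq (1-\alpha)\Lambda_t^{\text{HB}} + \beta\stepsize\sigma^2$, and unrolling with $\sum_{k}(1-\alpha)^k \leq 1/\alpha$ gives the stated bound $\Lambda_T^{\text{HB}} \leq (1-\alpha)^T\Lambda_0^{\text{HB}} + \frac{\beta\stepsize\sigma^2}{\alpha}$.

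Finally, to extract the complexity I would set $\stepsize = \frac{\momentum}{4L}$ and $\alpha = \min\left\{\frac{\beta}{2}, \frac{\mu_c^2\mu_H\stepsize}{4}\right\}$, so that in the dominant regime $\frac{1}{\alpha} = \frac{16L}{\mu_c^2\mu_H\beta}$ and the noise term collapses to $\frac{\beta\stepsize\sigma^2}{\alpha} = \frac{4\beta\sigma^2}{\mu_c^2\mu_H}$. Choosing $\beta = \min\left\{1, \frac{\mu_c^2\mu_H}{8\sigma^2}\varepsilon\right\}$ then forces this below $\varepsilon/2$, while demanding $(1-\alpha)^T\Lambda_0^{\text{HB}} \leq \varepsilon/2$ needs $T = \frac{1}{\alpha}\log\frac{2\Lambda_0^{\text{HB}}}{\varepsilon}$; substituting the two cases of $\beta$ produces the advertised $T = \wt\cO\left(\frac{L}{\mu_c^2\mu_H} + \frac{L\sigma^2}{\mu_c^4\mu_H^2}\frac{1}{\varepsilon}\right)$. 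The only genuinely delicate point—and the crux of the whole argument—is the sign analysis that eliminates $\sqnorm{c(x^t)-c(x^*)}$; everything else reuses the Young's-inequality and geometric-sum bookkeeping already carried out for \Cref{thm:SGDM}.
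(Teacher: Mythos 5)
Your proposal is correct and follows essentially the same route as the paper: the paper's proof of \Cref{thm:SGDM_SC} likewise invokes \Cref{le:descent_HB_HC} with $\mu_H>0$, uses the condition $\alpha \leq \min\{\momentum/2,\, \mu_c^2\mu_H\stepsize/4\}$ to make the coefficient of $\sqnorm{c(x^t)-c(x^*)}$ non-positive, and then reuses the \Cref{thm:SGDM}/\Cref{thm:Proj_SGD_SC} bookkeeping (momentum error recursion, Lyapunov contraction, geometric unrolling, parameter tuning). Your sign analysis, the check $\alpha\leq\momentum/2$ for the gradient-error contraction, and the verification that the $\sqnorm{x^{t+1}-x^t}$ coefficient is non-negative for $\stepsize\leq\momentum/(4L)$ all match the intended argument, and are in fact spelled out in more detail than the paper's own terse proof.
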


\end{tcolorbox}

\begin{proof}
    Applying \Cref{le:descent_HB_HC} with $\mu_H > 0$, and setting $\alpha$ small enough allows us to cancel the term invloving $\sqnorm{c(x^t) - c(x^*)}$.
    The rest of the proof is similar to the one of \Cref{thm:Proj_SGD_SC}. 
\end{proof}

We remark that both \Cref{thm:SGDM,thm:SGDM_SC} provide last iterate global convergence for Projected SGD with momentum without the need of using large mini-batch. Additionally, the gradient estimate $g^t$ is guaranteed to converge to the true gradient $\nabla F(x^*)$ at the optimum $x^*\in \cX^*$, which might be non-zero when minimizing over a compact set $\cX$. In the hidden strongly convex case, similarly to \Cref{cor:subgrad_SC,cor:Proj_SGD_SC}, the result of \Cref{thm:SGDM_SC} can be translated to the point convergence to the optimal solution.

	\section{Conclusions}
	\label{sec:conclusions}
	In this work, we study stochastic optimization under hidden convexity and develop sample complexity results for batch-free stochastic (sub-)\,gradient methods with projection. 
 
Several questions remain open. 1) We know that in case $\mu_H > 0$, the derived sample complexity is worst-case optimal (up to the logarithmic factor) in terms of dependence on $\varepsilon$  since it matches the optimal rate known for strongly convex $F(\cdot)$, and therefore, the complexity bounds are unimprovable for SM and P-SGD. However, for merely convex $H(\cdot)$, i.e., $\mu_H = 0$, it is unclear if our $\wt \cO(\varepsilon^{-3})$ sample complexity is tight for SM and P-SGD. 2). The benefits of momentum variants of P-SGD can be further explored, e.g., to understand if Nesterov's acceleration is possible under hidden convexity. 3) When $\mu_H = 0$, our iteration and sample complexity results depend on the diameter of the reformulated problem. It would be interesting to explore if $D_{\cU}$ can be replaced with the distance to the solution, i.e., $\norm{c(x^0) - c(x^*)}$. 

There are also many other directions to explore in the future. 1) SM and P-SGD are the simplest and generic methods for solving \eqref{problem:original}. It is important to explore more advanced specialized algorithms for applications, which may potentially speed up the convergence. For instance, given a stochastic information about the map $c(x) = \Exp{c(x, \xi)}$, one can utilize the samples $c(x, \xi)$ or $\nabla c(x, \xi)$ in the algorithm. Despite some recent progress \cite{chen2022efficient}, the rigorous validation of such methods remains an open problem with a general convex constrained $\cX$. 2) The development of stochastic gradient methods for solving hidden convex problems with non-convex (e.g., hidden convex) constraints is an interesting research direction \cite{xu2021crpo,ding2022Nat_PDPG4constr_MDP,zhao2023global_safeLQR,li2021faster_CMDP}. 3) Extension of our results to hidden convex saddle point problems and games \cite{pattathil2023symmetric_optim_NPG} also merits further exploration.

	\appendix

	\section{Technical Lemma}\label{sec:technical_lemma}
	We report the following technical lemma from \cite{davis2019stochastic,fatkhullin2023momentum} and include their slightly modified proofs for completeness. 
 
	\begin{lemma}[Lemma 3.2 in \cite{davis2019stochastic}]\label{le:prox_grad_prox_relation}
		Let $\rho > \ell$, and for any $x^t \in \cX$, define $\hat{x}^t := \operatorname{prox}_{ \Phi/\rho }(x^t)$, where $\Phi := F + \delta_{\cX} $. Then
		$
		\hat{x}^t= \Pi_{\cX} \left(\stepsize \rho x^t-\stepsize \hat g^t + (1-\stepsize \rho) \hat{x}^t\right) ,
		$ where $\hat g^t \in \partial F(\hat x^t)$.
	\end{lemma}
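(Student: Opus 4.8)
The plan is to read off the first-order optimality condition of the proximal subproblem and then recognize it as the normal-cone characterization of a Euclidean projection. First I would recall that, by definition, $\hat{x}^t$ minimizes the map
$$
y \mapsto \Phi(y) + \frac{\rho}{2}\sqnorm{y - x^t} = F(y) + \frac{\rho}{2}\sqnorm{y - x^t} + \delta_{\cX}(y).
$$
Since $F$ is $\ell$-weakly convex and $\rho > \ell$, the function $F_\rho(y) := F(y) + \frac{\rho}{2}\sqnorm{y - x^t}$ is convex (in fact $(\rho-\ell)$-strongly convex), so the subproblem is a genuine convex program with a unique minimizer. Writing its optimality condition via the subdifferential sum rule gives $0 \in \partial F_\rho(\hat{x}^t) + N_{\cX}(\hat{x}^t)$, where $N_{\cX}(\cdot) = \partial \delta_{\cX}(\cdot)$ is the normal cone of $\cX$. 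Because the quadratic term is smooth, $\partial F_\rho(\hat{x}^t) = \partial F(\hat{x}^t) + \rho(\hat{x}^t - x^t)$, and hence there exists $\hat{g}^t \in \partial F(\hat{x}^t)$ with
$$
-\hat{g}^t - \rho(\hat{x}^t - x^t) \in N_{\cX}(\hat{x}^t).
$$

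Next I would invoke the variational characterization of the projection: for any $w \in \R^d$ and any $z \in \cX$, one has $z = \Pi_{\cX}(w)$ if and only if $w - z \in N_{\cX}(z)$ (necessity and sufficiency follow from strong convexity of $\frac{1}{2}\sqnorm{\cdot - w}$). Setting $w := \stepsize \rho x^t - \stepsize \hat{g}^t + (1-\stepsize \rho)\hat{x}^t$, a one-line computation yields
$$
w - \hat{x}^t = \stepsize \rho (x^t - \hat{x}^t) - \stepsize \hat{g}^t = \stepsize\left(-\hat{g}^t - \rho(\hat{x}^t - x^t)\right).
$$
Since $N_{\cX}(\hat{x}^t)$ is a cone and $\stepsize > 0$, the inclusion from the optimality condition is preserved under multiplication by $\stepsize$, so $w - \hat{x}^t \in N_{\cX}(\hat{x}^t)$. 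As $\hat{x}^t \in \cX$, the projection characterization then gives $\hat{x}^t = \Pi_{\cX}(w)$, which is precisely the claimed identity.

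The main obstacle, modest as it is, lies in justifying the subdifferential calculus for the weakly convex $F$ rather than in the algebra. I must argue that (i) the Fr\'echet subdifferential used in the paper agrees, up to the gradient of the smooth quadratic, with the convex subdifferential of $F_\rho$, and (ii) the sum rule $\partial(F_\rho + \delta_{\cX}) = \partial F_\rho + N_{\cX}$ holds. Point (i) is standard: adding a continuously differentiable function merely translates the Fr\'echet subdifferential by its gradient, and for the convex $F_\rho$ this subdifferential coincides with the usual convex one. Point (ii) is the convex sum rule, valid under the mild qualification that $\operatorname{dom} F_\rho = \R^d$ meets $\cX$, which holds here since $F$ is finite on $\cX$. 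Everything else reduces to the short algebraic manipulation above together with the cone property of $N_{\cX}$.
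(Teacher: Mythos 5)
Your proposal is correct and follows essentially the same route as the paper's proof: both derive the first-order optimality condition of the proximal subproblem (using $\rho > \ell$ to make $F + \frac{\rho}{2}\sqnorm{\cdot - x^t}$ convex and justify the subdifferential sum rule), extract $\hat g^t \in \partial F(\hat x^t)$, rescale by $\stepsize$, and identify the resulting inclusion as the optimality condition of the projection. Your normal-cone phrasing $w - \hat x^t \in N_{\cX}(\hat x^t)$ is just the paper's statement $z^t \in \hat x^t + \stepsize\,\partial\delta_{\cX}(\hat x^t)$ with $\hat x^t = \operatorname{prox}_{\stepsize\delta_{\cX}}(z^t) = \Pi_{\cX}(z^t)$, since $N_{\cX} = \partial\delta_{\cX}$ and the cone is scale-invariant.
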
 
	\begin{proof}
		By definition of $\hat{x}^t$ and $\Phi(\cdot) $, we have
		$$
		0 \in  \partial \left( F + \frac{\rho}{2} \sqnorm{ \cdot - x^t } + \delta_{\cX} \right)(\hat x^t) =  \hat g^t + \rho \left(\hat{x}^t-x^t\right)  + \partial \delta_{\cX} \left(\hat{x}^t\right) ,
		$$
		where the last equality holds, since $F(\cdot) + \frac{\rho}{2}\sqnorm{\cdot - x^t } $, and $\delta_{\cX}(\cdot)$ are both convex (due to the conic combination rule). 
		Multiplying both sides by $\stepsize > 0$ and rearranging, we get
		$
		z^t := \stepsize \rho x^t-\stepsize  \hat g^t  + (1-\stepsize \rho) \hat{x}^t \in \hat{x}^t+\stepsize \partial \delta_{\cX} \left(\hat{x}^t\right) .
		$
		Therefore, by the optimality condition for the proximal sub-problem, we have
		$ \hat{x}^t=\operatorname{prox}_{\stepsize \delta_{\cX}}\left(z^t\right)  = \Pi_{\cX}(z^t)$.
	\end{proof}

\begin{customproof}{\Cref{le:prox_grad_prox_bound_nonsmooth}}
	Lemma~\ref{le:prox_grad_prox_relation} states that for any $\hat g^t\in \partial F(\hat x^t)$ and $z^t = \stepsize \rho x^t-\stepsize \hat g^t + (1-\stepsize \rho) \hat{x}^t$, we have $\hat{x}^t = \Pi_{\cX}(z^t)$. Thus, using the update rule of $x^{t+1}$ and non-expansiveness of the projection, we derive 
	$$
		\begin{aligned}
			&\mathbb{E}\left[\sqnorm{ x^{t+1}-\hat{x}^t } \mid x^t\right] =\mathbb{E}\left[\sqnorm{ \Pi_{\cX}\left(x^t-\stepsize g(x^t, \xi^t) \right) - \Pi_{\cX} \left(z^t\right) } \mid x^t\right] \\
			& \leq \mathbb{E}\left[ \sqnorm{ x^t-\stepsize g(x^t, \xi^t) - \left(\stepsize \rho x^t-\stepsize \hat g^t +  (1-\stepsize \rho) \hat{x}^t\right) } \mid x^t\right] \\
			& =\mathbb{E}\left[ \sqnorm{ (1-\stepsize \rho)\left(x^t-\hat{x}^t\right)-\stepsize\left( g(x^t, \xi^t) - \hat g^t  \right) } \mid x^t\right] \\
			& \overset{(i)}{ = }  (1-\stepsize \rho)^2 \sqnorm{ x^t-\hat{x}^t } -  2 (1-\stepsize \rho) \stepsize  \langle g^t - \hat g^t ,  x^t-\hat{x}^t  \rangle   + \stepsize^2 \Exp{ \sqnorm{ g(x^t, \xi^t) - \hat g^t } \mid x^t }\\
						& \overset{(ii)}{\leq}   (1-\stepsize \rho)^2 \sqnorm{ x^t-\hat{x}^t } -  2 (1-\stepsize \rho) \stepsize  \langle g^t - \hat g^t ,  x^t-\hat{x}^t  \rangle   + 4 G_F^2 \stepsize^2 \\
			& \overset{(iii)}{\leq}   (1-\stepsize \rho)^2 \sqnorm{ x^t-\hat{x}^t } +  2 (1-\stepsize \rho) \stepsize  \ell \sqnorm{  x^t - \hat{x}^t }   + 4 G_F^2 \stepsize^2 \\
			& = (1-\stepsize \rho) \sqnorm{ x^t-\hat{x}^t } + 4 G_F^2 \stepsize^2 ,
		\end{aligned}
	$$
	where in $(i)$ we use unbiasedness of the gradient estimator. In $(ii)$, we use Young's inequality and A.2, $(iii)$ holds by hypo-monotonicity inequality $\langle g^t - \hat g^t,  x^t - \hat x^t \rangle \geq - \ell \sqnorm{x^t - \hat x^t}$. The last equality holds by the choice of $\rho$.
\end{customproof}

\begin{lemma}[Lemma 3.4 in \cite{davis2019stochastic}]\label{le:prox_grad_prox_bound}
		Let \ref{A1'}, \ref{A2'} hold, and $\rho=4 L$, $\stepsize \leq \frac{2}{9 L }$. Then for all $t \geq 0$
		$$
		\mathbb{E}\left[ \sqnorm{ x^{t+1}-\hat{x}^t } \mid x^t\right] \leq(1-\stepsize \rho)\sqnorm{x^t-\hat{x}^t } + \sigma^2 \stepsize^2
		$$
	\end{lemma}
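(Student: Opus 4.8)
The plan is to follow the template of the proof of \Cref{le:prox_grad_prox_bound_nonsmooth}, adapting the noise handling from a bounded-second-moment argument to a bias--variance decomposition suited to Assumption \ref{A2'}. First I would apply \Cref{le:prox_grad_prox_relation} with $\rho = 4L$ to write $\hat{x}^t = \Pi_{\cX}(z^t)$, where $z^t = \stepsize\rho\, x^t - \stepsize\, \nabla F(\hat{x}^t) + (1-\stepsize\rho)\hat{x}^t$; since $F$ is differentiable under \ref{A1'}, the subgradient $\hat{g}^t$ appearing in that lemma is simply $\nabla F(\hat{x}^t)$. Using the update rule $x^{t+1} = \Pi_{\cX}(x^t - \stepsize\nabla f(x^t,\xi^t))$ and the non-expansiveness of the Euclidean projection, the conditional expectation $\Exp{\sqnorm{x^{t+1}-\hat{x}^t}\mid x^t}$ is bounded by $\Exp{\sqnorm{(1-\stepsize\rho)(x^t-\hat{x}^t) - \stepsize(\nabla f(x^t,\xi^t) - \nabla F(\hat{x}^t))}\mid x^t}$.

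Next I would split the stochastic gradient as $\nabla f(x^t,\xi^t) - \nabla F(\hat{x}^t) = \big(\nabla F(x^t) - \nabla F(\hat{x}^t)\big) + \big(\nabla f(x^t,\xi^t) - \nabla F(x^t)\big)$, isolating a deterministic vector $v := (1-\stepsize\rho)(x^t-\hat{x}^t) - \stepsize(\nabla F(x^t) - \nabla F(\hat{x}^t))$ and the mean-zero noise $\Delta^t := \nabla f(x^t,\xi^t) - \nabla F(x^t)$. By unbiasedness the cross term vanishes in expectation, so the bound becomes $\sqnorm{v} + \stepsize^2\Exp{\sqnorm{\Delta^t}\mid x^t} \leq \sqnorm{v} + \stepsize^2\sigma^2$, where the last step uses the variance bound in \ref{A2'}. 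This cleanly produces the additive $\stepsize^2\sigma^2$ term.

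It remains to control $\sqnorm{v}$. Expanding the square and using, on one hand, the hypo-monotonicity inequality $\langle \nabla F(x^t)-\nabla F(\hat{x}^t),\, x^t-\hat{x}^t\rangle \geq -L\sqnorm{x^t-\hat{x}^t}$ (smoothness gives $\ell$-weak convexity with $\ell = L$), and on the other hand the $L$-Lipschitzness of $\nabla F$, i.e. $\sqnorm{\nabla F(x^t)-\nabla F(\hat{x}^t)} \leq L^2\sqnorm{x^t-\hat{x}^t}$, I would collapse the three resulting terms into the perfect square $\big(1-\stepsize\rho+\stepsize L\big)^2\sqnorm{x^t-\hat{x}^t}$, which with $\rho = 4L$ equals $(1-3\stepsize L)^2\sqnorm{x^t-\hat{x}^t}$. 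The main obstacle --- and the only place the precise step-size threshold enters --- is that the variance decomposition injects the extra term $\stepsize^2 L^2\sqnorm{x^t-\hat{x}^t}$, which, unlike the exact cancellation enjoyed in the non-smooth case, must be absorbed by strengthening the step-size restriction. Concretely, the proof closes by verifying the elementary inequality $(1-3\stepsize L)^2 \leq 1-4\stepsize L = 1-\stepsize\rho$, which reduces to $9\stepsize L \leq 2$ and hence holds exactly for $\stepsize \leq \tfrac{2}{9L}$; both $1-\stepsize\rho \geq 0$ and the sign used in the hypo-monotonicity step remain valid on this range since $\stepsize\rho \leq \tfrac{8}{9} < 1$.
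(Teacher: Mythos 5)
Your proposal is correct and follows essentially the same route as the paper's proof: projection relation from \Cref{le:prox_grad_prox_relation}, non-expansiveness, bias--variance split with the cross term vanishing by unbiasedness, and the Lipschitz bounds on the deterministic part. The only cosmetic difference is bookkeeping --- you invoke hypo-monotonicity with $\ell = L$ and collapse the three terms into the perfect square $(1-3\stepsize L)^2$, while the paper uses Cauchy--Schwarz and the factorization $(1-\stepsize\rho)\bigl(1-\stepsize\rho+2\stepsize L+\tfrac{\stepsize^2 L^2}{1-\stepsize\rho}\bigr)$; both reduce to the identical condition $9\stepsize L \leq 2$.
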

	\begin{proof}
		For a differentiable $F(\cdot)$ Lemma~\ref{le:prox_grad_prox_relation} implies that for $z^t = \stepsize \rho x^t-\stepsize \nabla F\left(\hat{x}^t\right)+(1-\stepsize \rho) \hat{x}^t$, we have $\hat{x}^t = \Pi_{\cX}(z^t)$. Thus, using the update rule of $x^{t+1}$ and non-expansiveness of the projection, we derive 
		$$
		\begin{aligned}
			&\mathbb{E}\left[\sqnorm{ x^{t+1}-\hat{x}^t } \mid x^t\right] =\mathbb{E}\left[\sqnorm{ \Pi_{\cX}\left(x^t-\stepsize \nabla f\left(x^t, \xi^{t}\right)\right) - \Pi_{\cX} \left(z^t\right) } \mid x^t\right] \\
			& \leq \mathbb{E}\left[ \sqnorm{ x^t-\stepsize \nabla f\left(x^t, \xi^{t} \right)-\left(\stepsize \rho x^t-\stepsize \nabla F\left(\hat{x}^t\right)+(1-\stepsize \rho) \hat{x}^t\right) } \mid x^t\right] \\
			& =\mathbb{E}\left[ \sqnorm{ (1-\stepsize \rho)\left(x^t-\hat{x}^t\right)-\stepsize\left(\nabla f\left(x^t, \xi^{t}\right)-\nabla F\left(\hat{x}^t\right)\right) } \mid x^t\right] \\
			& =\mathbb{E}\left[ \sqnorm{ (1-\stepsize \rho)\left(x^t-\hat{x}^t\right)-\stepsize\left(\nabla F\left(x^t\right)-\nabla F\left(\hat{x}^t\right)\right)-\stepsize\left(\nabla f\left(x^t, \xi^{t}\right)-\nabla F\left(x^t\right)\right) } \mid x^t\right] \\
			& \overset{(i)}{=}  \sqnorm{ (1-\stepsize \rho)\left(x^t-\hat{x}^t\right)-\stepsize\left(\nabla F\left(x^t\right)-\nabla F\left(\hat{x}^t\right)\right) } 
			+\stepsize^2 \mathbb{E} \left[\sqnorm{ \nabla f\left(x^t,\xi^{t}\right)-\nabla F\left(x^t\right) } \mid x^t\right] \\
			& \overset{(ii)}{\leq} \sqnorm{ (1-\stepsize \rho)\left(x^t-\hat{x}^t\right)-\stepsize\left(\nabla F\left(x^t\right)-\nabla F\left(\hat{x}^t\right)\right) } + \stepsize^2 \sigma^2 \\
			& =(1-\stepsize \rho)^2 \sqnorm{ x^t-\hat{x}^t } - 2 (1-\stepsize \rho) \stepsize\left(x^t-\hat{x}^t, \nabla F\left(x^t\right)-\nabla F\left(\hat{x}^t\right)\right\rangle+\stepsize^2 \sigma^2 \\
			& \quad+\stepsize^2 \sqnorm{ \nabla F\left(x^t\right)-\nabla F\left(\hat{x}^t\right) }  \\
			& \overset{(iii)}{\leq} (1-\stepsize \rho)^2 \sqnorm{ x^t-\hat{x}^t } + 2(1-\stepsize \rho) \stepsize L \sqnorm{ x^t-\hat{x}^t } + \stepsize^2 L^2 \sqnorm{ x^t-\hat{x}^t } +\stepsize^2 \sigma^2 \\
			& =(1-\stepsize \rho)\left(1-\stepsize \rho+2 \stepsize L+\frac{\stepsize^2 L^2}{1-\stepsize \rho}\right) \sqnorm{ x^t-\hat{x}^t } + \stepsize^2 \sigma^2 \\
			& \leq(1-\stepsize \rho) \sqnorm{ x^t-\hat{x}^t } + \stepsize^2 \sigma^2,
		\end{aligned}
		$$
		where in $(i)$ and $(ii)$ use unbiasedness of the gradient estimator and bounded variance. In $(iii)$, we use Cauchy–Schwarz inequality and smoothness of $F(\cdot)$, i.e., $\norm{\nabla F\left(\hat{x}^t\right)-\nabla F\left(x^t\right) } \leq L \norm{\hat{x}^t-x^t } $. The last inequality holds by the choice of $\rho, \stepsize$ and $2 \stepsize L \leq \frac{\stepsize \rho}{2}$, and $\frac{\stepsize^2 L}{1-\stepsize \rho} \leq \frac{\stepsize \rho}{2}$.
	\end{proof}

	\begin{customproof}{\Cref{le:key_HB_recursion}}
		Using the update rule of $g^{t+1}$ and the unbiasedness of stochastic gradients, we have
  $$
		\begin{aligned}
			\mathbb{E}&\left[ \sqnorm{g^{t+1} - \nabla F(x^{t+1})} \right] 
			 =  \Exp{ \sqnorm{(1 - \momentum) g^{t} + \momentum \nabla f(x^{t+1}, \xi^{t+1}) - \nabla F(x^{t+1})}  } \notag \\
			& =  (1-\momentum)^2 \Exp{ \sqnorm{  g^{t} - \nabla F(x^{t+1}) } }  +  \momentum^2 \Exp{\sqnorm{\nabla f(x^{t+1}, \xi^{t+1}) - \nabla F(x^{t+1}) }} \notag \\
			& \leq   (1 - \momentum)^2 \rb{ 1 + \nfr{\momentum}{2}} \Exp{ \sqnorm{  g^{t} - \nabla F(x^{t}) } }  + \rb{1+\nfr{2}{\momentum} } \Exp{ \sqnorm{  \nabla F(x^{t}) - \nabla F(x^{t+1}) }  } + \momentum^2 \sigma^2  \notag \\
			& \leq   (1 - \momentum) \Exp{ \sqnorm{  g^{t} - \nabla F(x^{t}) } }  + \fr{3 L^2}{\momentum} \Exp{ \sqnorm{  x^{t} - x^{t+1}  } } 
   + \momentum^2 \sigma^2  \notag ,
		\end{aligned}
  $$
		where the first inequality uses Young's inequality and the bound of the variance of stochastic gradients, and the last step uses the Lipschitz continuity of the gradient and the fact that $(1 - \momentum) \rb{ 1 + \nfr{\momentum}{2}} \leq 1 $ for all $\momentum \in (0,1]$.
	\end{customproof}

The following technical lemma is fairly standard, e.g., see \cite{Guler_PPM}.
	
	\begin{lemma}\label{le:bregman_properties}
			Let $\phi(\cdot)$ be convex and for any $\stepsize>0$, $x\in \cX$, define $$x^{+} := \argmin_{y\in \cX} \left\{ \phi(y) + \fr{1}{2 \stepsize} \sqnorm{y - x} \right\}. $$ 
        Then
			$$
			\phi(y) + \fr{1}{2 \stepsize} \sqnorm{y - x} \geq \phi(x^{+}) + \fr{1}{2 \stepsize} \sqnorm{x^{+} - x }  + \fr{1}{2\stepsize} \sqnorm{ y  - x^{+} }  \quad \text{for all } y \in \cX. 
			$$\label{le:bregman_properties_optimality}
	\end{lemma}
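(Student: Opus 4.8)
The plan is to exploit the $\fr{1}{\stepsize}$-strong convexity of the proximal objective $g(y) := \phi(y) + \fr{1}{2\stepsize}\sqnorm{y-x}$, which holds because $\phi$ is convex while the quadratic penalty is $\fr{1}{\stepsize}$-strongly convex. Since $x^{+}$ minimizes $g$ over the convex set $\cX$, the claimed inequality is exactly the strong-convexity lower bound $g(y) \geq g(x^{+}) + \fr{1}{2\stepsize}\sqnorm{y-x^{+}}$ rewritten in terms of $\phi$. I would establish it from first principles using the first-order optimality condition together with a short algebraic identity, so that no external strong-convexity result needs to be cited.

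First I would record the optimality condition. Because $x^{+}$ solves $\min_{y\in\cX} g(y)$ and $\phi$ is convex, there exists a subgradient $s \in \partial\phi(x^{+})$ such that $\langle s + \fr{1}{\stepsize}(x^{+}-x),\, y-x^{+}\rangle \geq 0$ for all $y\in\cX$; rearranging gives $\langle s,\, y-x^{+}\rangle \geq \fr{1}{\stepsize}\langle x-x^{+},\, y-x^{+}\rangle$. Combining this with the subgradient inequality $\phi(y) \geq \phi(x^{+}) + \langle s,\, y-x^{+}\rangle$ yields
\[
\phi(y) \;\geq\; \phi(x^{+}) + \fr{1}{\stepsize}\langle x-x^{+},\, y-x^{+}\rangle \qquad \text{for all } y\in\cX .
\]

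Next I would treat the quadratic terms with the exact expansion $\sqnorm{y-x} = \sqnorm{y-x^{+}} + 2\langle y-x^{+},\, x^{+}-x\rangle + \sqnorm{x^{+}-x}$, which rearranges to
\[
\fr{1}{2\stepsize}\sqnorm{y-x} - \fr{1}{2\stepsize}\sqnorm{x^{+}-x} - \fr{1}{2\stepsize}\sqnorm{y-x^{+}} = \fr{1}{\stepsize}\langle y-x^{+},\, x^{+}-x\rangle .
\]
Adding this identity to the previous display, the two cross terms $\fr{1}{\stepsize}\langle x-x^{+},\, y-x^{+}\rangle$ and $\fr{1}{\stepsize}\langle y-x^{+},\, x^{+}-x\rangle$ are negatives of one another by symmetry of the inner product and cancel, leaving precisely $\phi(y) + \fr{1}{2\stepsize}\sqnorm{y-x} \geq \phi(x^{+}) + \fr{1}{2\stepsize}\sqnorm{x^{+}-x} + \fr{1}{2\stepsize}\sqnorm{y-x^{+}}$, the desired bound.

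The only step requiring care is the optimality condition: I would phrase it as the existence of $s\in\partial\phi(x^{+})$ satisfying the stated variational inequality (equivalently, $0\in\partial\phi(x^{+}) + \fr{1}{\stepsize}(x^{+}-x) + \partial\delta_{\cX}(x^{+})$), and since $\phi$, the quadratic penalty, and $\delta_{\cX}$ are all convex, the sum rule for subdifferentials applies with no constraint qualification needed. Once this variational inequality is available, the remaining two ingredients -- the convexity bound on $\phi(y)$ and the exact-square identity -- are entirely routine, so I do not anticipate any real obstacle beyond correctly tracking the cancellation of the cross terms.
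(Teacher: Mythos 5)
Your proof is correct. The paper itself gives no proof of this lemma --- it is stated as ``fairly standard'' with a citation to G\"uler --- and your argument (first-order optimality via a subgradient $s\in\partial\phi(x^{+})$, the subgradient inequality for $\phi$, and the exact expansion of $\|y-x\|^{2}$ around $x^{+}$ with cancellation of the cross terms) is exactly the canonical three-point argument that the citation delegates to; the algebra checks out line by line.

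One claim deserves a correction: you assert that the sum rule for subdifferentials applies ``with no constraint qualification needed'' because all three functions are convex. That is not quite right in general: writing $0\in\partial\phi(x^{+})+\frac{1}{\eta}(x^{+}-x)+\partial\delta_{\cX}(x^{+})$ from $0\in\partial\bigl(\phi+\frac{1}{2\eta}\|\cdot-x\|^{2}+\delta_{\cX}\bigr)(x^{+})$ invokes the Moreau--Rockafellar sum rule, which does require a qualification such as $\operatorname{ri}(\operatorname{dom}\phi)\cap\operatorname{ri}\cX\neq\emptyset$; convexity alone is insufficient (the standard counterexamples have the two domains touching along a boundary). In the present paper this is harmless: the lemma is only invoked in the proof of \cref{le:descent_HB_HC} with $\phi(y)=\langle g^{t},y\rangle$ linear, hence finite and continuous everywhere, so the sum rule holds unconditionally. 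If you want the lemma in the stated generality without any qualification, bypass subdifferentials entirely: the function $h:=\phi+\frac{1}{2\eta}\|\cdot-x\|^{2}+\delta_{\cX}$ is $\frac{1}{\eta}$-strongly convex, and since $x^{+}$ minimizes $h$,
\begin{equation*}
h(x^{+})\;\leq\;h\bigl((1-\lambda)x^{+}+\lambda y\bigr)\;\leq\;(1-\lambda)h(x^{+})+\lambda h(y)-\frac{\lambda(1-\lambda)}{2\eta}\|y-x^{+}\|^{2},
\end{equation*}
so dividing by $\lambda$ and letting $\lambda\downarrow 0$ yields $h(y)\geq h(x^{+})+\frac{1}{2\eta}\|y-x^{+}\|^{2}$, which is the claimed inequality with no calculus rules at all. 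Apart from this one overreaching remark, your write-up is complete and correct.
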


 \section{Further Improvements with Mini-batching}\label{sec:minibatching_appendix}
Using only one (post-processing) step of mini-batch P-SGD to the output of one batch P-SGD is sufficient to translate convergence from $\Phi_{1/4L}(\cdot)$ to $F(\cdot)$.
    \begin{corollary}\label{cor:Proj_SGD_C}
	Let the assumptions of \Cref{thm:Proj_SGD_C} hold and $G_F > 0 $ be the Lipschitz constant of $F(\cdot)$ over $\cX$. Set $x^{T+1} = \Pi_{\cX}\rb{x^T - \frac{1}{3 L} \frac{1}{B_0} \sum_{i=1}^{B_0} \nabla F(x^T, \xi_i^T) } $, where $B_0 \geq \min\{1, \rb{\frac{G_F \sigma}{3 L \varepsilon}}^2 \}$, $x^T$ is the output of \eqref{eq:PSGD} applied with batch-size $B=1$ after $T$ iterations (given by \Cref{thm:Proj_SGD_C}). Then $\Exp{F(x^{T+1}) - F(x^*)} \leq 2 \, \varepsilon$.
	\end{corollary}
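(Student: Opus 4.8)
The plan is to extract two quantitative consequences of \Cref{thm:Proj_SGD_C} and then append a single, carefully calibrated descent step. Since $\rho = 4L$ and $\hat x^T = \operatorname{prox}_{\Phi/\rho}(x^T) \in \cX$, the Moreau envelope decomposes as $\Phi_{1/\rho}(x^T) = F(\hat x^T) + 2L\,\sqnorm{\hat x^T - x^T}$, so the guarantee $\Lambda_T \leq \varepsilon$ reads
\[
\Exp{F(\hat x^T) - F(x^*)} + 2L\, \Exp{\sqnorm{\hat x^T - x^T}} = \Lambda_T \leq \varepsilon .
\]
The crucial point is to keep these two nonnegative contributions \emph{together}, since their specific weighted sum is exactly $\Lambda_T$; splitting them would lose the rate.

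Next I would analyze the post-processing step $x^{T+1} = \Pi_{\cX}(x^T - \stepsize_0 \bar g^T)$ with $\stepsize_0 = 1/(3L)$ and mini-batch gradient $\bar g^T := \frac{1}{B_0}\sum_{i=1}^{B_0} \nabla f(x^T, \xi_i^T)$, reusing the descent computation from the proof of \Cref{le:descent_HB_HC} up to the choice of comparison point, with $g^t$ replaced by $\bar g^T$ and the gradient error now being the sampling error $\nabla F(x^T) - \bar g^T$. Concretely: apply $L$-smoothness of $F$ to upper bound $F(x^{T+1})$, insert $\pm\,\bar g^T$, use the projection optimality inequality \Cref{le:bregman_properties_optimality} to move the linear term to a free point $z \in \cX$, apply Young's inequality to the sampling-error cross term (which cancels the $-\tfrac{1}{2\stepsize_0}\sqnorm{z - x^{T+1}}$ produced by the projection), and use smoothness once more to pass from $F(x^T) + \langle \nabla F(x^T), z - x^T\rangle$ to $F(z) + \tfrac{L}{2}\sqnorm{z - x^T}$. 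Choosing $z = \hat x^T$ gives
\[
F(x^{T+1}) \leq F(\hat x^T) + \left(\tfrac{L}{2} + \tfrac{1}{2\stepsize_0}\right)\sqnorm{\hat x^T - x^T} + \tfrac{\stepsize_0}{2}\sqnorm{\nabla F(x^T) - \bar g^T} - \left(\tfrac{1}{2\stepsize_0} - \tfrac{L}{2}\right)\sqnorm{x^{T+1}-x^T} .
\]

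The observation that aligns all constants is that with $\stepsize_0 = 1/(3L)$ one has $\tfrac{L}{2} + \tfrac{1}{2\stepsize_0} = 2L$, which is precisely the Moreau weight $\rho/2$ above, while $\tfrac{1}{2\stepsize_0} - \tfrac{L}{2} = L \geq 0$, so the last term is dropped. Hence $F(x^{T+1}) \leq F(\hat x^T) + 2L\,\sqnorm{\hat x^T - x^T} + \tfrac{1}{6L}\sqnorm{\nabla F(x^T) - \bar g^T}$. Taking total expectation, the first two terms assemble exactly into $\Lambda_T \leq \varepsilon$, while independence of the mini-batch samples gives $\Exp{\sqnorm{\nabla F(x^T) - \bar g^T}} \leq \sigma^2 / B_0$, so
\[
\Exp{F(x^{T+1}) - F(x^*)} \leq \varepsilon + \frac{\sigma^2}{6 L B_0} .
\]
It then suffices to pick $B_0$ large enough that the residual variance term is at most $\varepsilon$, which yields the claimed $\Exp{F(x^{T+1}) - F(x^*)} \leq 2\varepsilon$.

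The main obstacle, and the step I would verify most carefully, is exactly this matching of constants. If instead one bounded $F(x^{T+1}) - F(\hat x^T)$ via Lipschitz continuity of $F$ together with the linear distance $\norm{x^{T+1} - \hat x^T}$, the prox-distance term $\Exp{\norm{\hat x^T - x^T}} \leq \sqrt{\varepsilon/(2L)}$ would degrade the guarantee to $O(\sqrt{\varepsilon})$. The squared, descent-lemma-based accounting preserves the $O(\varepsilon)$ rate precisely because the choice $\stepsize_0 = 1/(3L)$ forces the coefficient of $\sqnorm{\hat x^T - x^T}$ to coincide with $\rho/2$. I would also check the admissibility $\stepsize_0 \leq 1/L$ needed to discard the $\sqnorm{x^{T+1}-x^t}$ term, which holds for $\stepsize_0 = 1/(3L)$. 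Notably, this route controls the true gap directly and does not even invoke the global Lipschitz constant $G_F$ of $F$.
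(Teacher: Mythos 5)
Your proof is correct, but it takes a genuinely different route from the paper. The paper's proof is two-staged: it introduces the idealized deterministic step $x_+^T = \Pi_{\cX}\rb{x^T - \frac{1}{3L}\nabla F(x^T)}$, invokes a known proximal-gradient fact (Proposition 2.5-(i) of Stella--Themelis--Patrinos, with $\gamma = (\rho-L)^{-1}$) to get $F(x_+^T) \leq \Phi_{1/\rho}(x^T)$, and then compares the stochastic step to the idealized one via the $G_F$-Lipschitz continuity of $F$ and non-expansiveness of the projection, incurring the first-order error $\frac{G_F}{3L}\Exp{\norm{\bar g^T - \nabla F(x^T)}} \leq \frac{G_F \sigma}{3L\sqrt{B_0}}$ --- hence the batch size $B_0 \gtrsim \rb{\frac{G_F\sigma}{L\varepsilon}}^2$. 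You instead run a single stochastic descent analysis against the comparison point $z = \hat x^T$, which in effect reproves the cited envelope property inline (your inequality with $\bar g^T = \nabla F(x^T)$ is exactly $F(x^{T+1}) \leq \Phi_{1/\rho}(x^T)$) while absorbing the sampling error at second order via Young's inequality, so the noise enters as $\frac{\stepsize_0}{2}\Exp{\sqnorm{\bar g^T - \nabla F(x^T)}} \leq \frac{\sigma^2}{6LB_0}$. Your accounting is verified correct: the coefficient matching $\frac{L}{2} + \frac{1}{2\stepsize_0} = 2L = \frac{\rho}{2}$ at $\stepsize_0 = \frac{1}{3L}$ is exactly right, as is dropping the $\sqnorm{x^{T+1}-x^T}$ term since $\frac{1}{2\stepsize_0} - \frac{L}{2} = L \geq 0$. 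What your route buys: no dependence on $G_F$ at all, and a batch-size requirement $B_0 \geq \frac{\sigma^2}{6L\varepsilon}$ scaling as $\varepsilon^{-1}$ rather than the paper's $\varepsilon^{-2}$ --- a strictly sharper result in the small-$\varepsilon$ regime. One small caveat: the corollary fixes the threshold $B_0 \geq \rb{\frac{G_F\sigma}{3L\varepsilon}}^2$ (the stated $\min$ is evidently a typo for $\max$), which implies your requirement only when $\varepsilon \leq \frac{2G_F^2}{3L}$; so you prove the statement verbatim in the relevant regime, and otherwise a mild variant with a different (generally weaker) batch-size condition, which you flag transparently.
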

	\begin{proof}
        Define $x_+^T := \Pi_{\cX}(x^T - \frac{1}{\rho - L} \nabla F(x^T))$, $\rho = 4 L$.
		Notice that $F(x_+^T) =  \Phi(x_+^T) \leq \Phi_{1/\rho}(x^T)$, where the inequality follows by \cite[Poposition 2.5-(\textit{i})]{Stella_Themelis_Patrinos_2017} with $\gamma := (\rho - L)^{-1}$. Therefore, \Cref{thm:Proj_SGD_C} implies that
		$$
		\Exp{F(x_+^{T}) - F(x^*) } \leq \Exp{\Phi_{1/\rho}(x^T)  - F(x^*) }  \leq \varepsilon ,
		$$
  On the other hand, the post-processing step guarantees
  $$
    \begin{aligned}
		 \mathbb E & \left[ F(x^{T+1}) - F(x_+^T) \right] \leq G_F \, \Exp{\norm{ x^{T+1} - x_+^T } } \\
            &\leq  G_F \Exp{\norm{ \Pi_{\cX}\rb{x^T - \frac{1}{3 L} \frac{1}{B_0} \sum_{i=1}^{B_0} \nabla F(x^T, \xi_i^T) } - \Pi_{\cX}\rb{x^T - \frac{1}{3 L}  \nabla F(x^T) } }} \\
            &\leq  \frac{G_F}{3 L}\Exp{ \norm{  \frac{1}{B_0} \sum_{i=1}^{B_0} \nabla F(x^T, \xi_i^T) -   \nabla F(x^T) } } \leq \frac{G_F \sigma }{ 3 L \sqrt{B_0}} \leq \varepsilon.
	\end{aligned}
  $$
  Combining the above two inequalities, the result follows. 
	\end{proof}

 The following corollary shows that if we apply mini-batching at each iteration with sufficiently large batch-size, then the number of iterations required for convergence is reduced to $\wt \cO(\varepsilon^{-1})$. 

\begin{corollary}
    \label{cor:Proj_SGD_C_minibatching}
	Let the assumptions of \Cref{thm:Proj_SGD_C} hold and $G_F > 0 $ be the Lipschitz constant of $F(\cdot)$ over $\cX$. Suppose P-SGD with batch-size $B$ is applied, i.e., $\{x^t\}_{t\geq0}$ is generated by  $x^{t+1} = \Pi_{\cX}\rb{x^t - \stepsize \frac{1}{B} \sum_{i=1}^{B} \nabla F(x^t, \xi_i^t) } $  with $\eta = \frac{2}{9 L}$, $B \geq \min\{1, \frac{D_{\cU}^2 \sigma^2}{\mu_c^2 \varepsilon^2} \}$. Define $x^{T+1} = \Pi_{\cX}\rb{x^T - \frac{1}{3 L } \frac{1}{B_0} \sum_{i=1}^{B_0} \nabla F(x^T, \xi_i^T) } $, where $\rho = 4 L$, and $B_0 \geq \min\{1, \rb{\frac{G_F \sigma}{3 L \varepsilon}}^2 \} $. Then $\Exp{F(x^{T+1}) - F(x^*)} \leq 2 \, \varepsilon$ after
        $
 T = \wt \cO \rb{ \frac{L D_{\cU}^2}{\mu_c^2  \varepsilon} } .	
  $
\end{corollary}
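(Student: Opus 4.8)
The plan is to reduce the statement to a mini-batch version of \Cref{thm:Proj_SGD_C} and then invoke the post-processing argument already established in \Cref{cor:Proj_SGD_C}. The starting observation is that replacing the single-sample gradient by the average of $B$ i.i.d.\ samples preserves unbiasedness while shrinking the variance: under \ref{A2'} the estimator $\frac{1}{B}\sum_{i=1}^{B} \nabla f(x^t,\xi_i^t)$ satisfies $\Exp{\sqnorm{\frac{1}{B}\sum_{i=1}^{B}\nabla f(x^t,\xi_i^t) - \nabla F(x^t)}} \leq \sigma^2/B$. Consequently \Cref{le:prox_grad_prox_bound}, and hence \Cref{le:descent_like_Proj_SGD} and the recursion of \Cref{thm:Proj_SGD_C}, all hold verbatim with $\sigma^2$ replaced by $\sigma^2/B$. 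In particular, for $\rho = 4L$, $\stepsize = \frac{2}{9L}$ and any $\alpha \leq 2\stepsize L$,
\begin{equation*}
\Lambda_T \leq (1-\alpha)^T \Lambda_0 + \fr{3 D_{\cU}^2 \alpha}{2\mu_c^2 \stepsize} + \fr{4 L \stepsize^2 \sigma^2}{\alpha B}.
\end{equation*}

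Next I would exploit the enlarged batch-size to control the variance term. Since $B \geq \frac{D_{\cU}^2\sigma^2}{\mu_c^2\varepsilon^2}$, the effective variance obeys $\sigma^2/B \leq \mu_c^2\varepsilon^2/D_{\cU}^2$, so the last term above is at most $\fr{4 L \stepsize^2 \mu_c^2\varepsilon^2}{\alpha D_{\cU}^2}$. The crucial point is that this now scales like $\varepsilon^2/\alpha$ rather than $1/\alpha$, so a single choice $\alpha = \Theta\rb{\frac{\mu_c^2\varepsilon}{L D_{\cU}^2}}$ (which is admissible, i.e.\ $\leq 2\stepsize L$, for small $\varepsilon$) makes both the second and third terms of order $\varepsilon$. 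Choosing the absolute constants in $\alpha$ and in the bound on $B$ so that each of these terms is at most $\varepsilon/3$, and taking $T = \frac{1}{\alpha}\log(3\Lambda_0/\varepsilon)$, yields $\Lambda_T = \Exp{\Phi_{1/\rho}(x^T) - F(x^*)} \leq \varepsilon$ after $T = \wt\cO\rb{\frac{L D_{\cU}^2}{\mu_c^2\varepsilon}}$ iterations, which is precisely the advertised iteration count.

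Finally, I would convert this Moreau-envelope guarantee into a function-value guarantee by repeating the post-processing step of \Cref{cor:Proj_SGD_C} unchanged. Setting $x_+^T := \Pi_{\cX}(x^T - \frac{1}{3L}\nabla F(x^T))$, the inequality $F(x_+^T) \leq \Phi_{1/\rho}(x^T)$ gives $\Exp{F(x_+^T) - F(x^*)} \leq \varepsilon$; and the $B_0$-sample post-processing update, together with non-expansiveness of $\Pi_{\cX}$, the $G_F$-Lipschitzness of $F$, and the bound $\Exp{\norm{\frac{1}{B_0}\sum_{i=1}^{B_0}\nabla f(x^T,\xi_i^T) - \nabla F(x^T)}} \leq \sigma/\sqrt{B_0}$, gives $\Exp{F(x^{T+1}) - F(x_+^T)} \leq \frac{G_F\sigma}{3L\sqrt{B_0}} \leq \varepsilon$ by the choice $B_0 \geq \rb{\frac{G_F\sigma}{3L\varepsilon}}^2$. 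Adding the two bounds yields $\Exp{F(x^{T+1}) - F(x^*)} \leq 2\varepsilon$.

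The main obstacle, such as it is, is essentially bookkeeping: one must verify that the whole chain of lemmas leading to \Cref{thm:Proj_SGD_C} depends on the stochastic gradient only through its variance bound, so that the substitution $\sigma^2 \mapsto \sigma^2/B$ is legitimate at every step. Once this is granted, the only genuinely new element is the rebalancing of $\alpha$ made possible by the smaller variance, which trades the $\wt\cO(\varepsilon^{-3})$ rate for $\wt\cO(\varepsilon^{-1})$; the post-processing step is then copied verbatim from \Cref{cor:Proj_SGD_C}.
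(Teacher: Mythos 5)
Your proposal is correct and takes essentially the same route as the paper: the paper's proof of \Cref{cor:Proj_SGD_C_minibatching} is precisely the one-line observation that mini-batching replaces $\sigma^2$ by $\sigma^2/B$ throughout the chain leading to \Cref{thm:Proj_SGD_C} (so that $\sigma^2/B \leq \mu_c^2\varepsilon^2/D_{\cU}^2$ eliminates the $\varepsilon^{-3}$ term), combined with the post-processing step of \Cref{cor:Proj_SGD_C} applied verbatim. Your write-up simply makes explicit the bookkeeping the paper leaves implicit --- checking that \Cref{le:prox_grad_prox_bound} and \Cref{le:descent_like_Proj_SGD} use the oracle only through unbiasedness and the variance bound, and rebalancing $\alpha = \Theta\rb{\mu_c^2\varepsilon/(L D_{\cU}^2)}$ accordingly (modulo absolute constants, which the paper's $\wt\cO$ notation absorbs).
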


\begin{proof}
    The proof follows from the previous corollary by replacing $\sigma^2$ with $\sigma^2 / B$.
\end{proof}

However, we highlight that the results of \cref{thm:Proj_SGD_C,cor:Proj_SGD_C} do not require using large batches of samples at every iteration. 





    \section{Historical Remarks}
     
    The sub-gradient method, its special case, Projected SGD, and their numerous variants have a long history of development since the first works on stochastic approximation appeared in 1950s \cite{robbins1951stochastic,Kiefer_Wolfowitz_1952,blum1954multidimensional,chung1954stochastic}. 
	
	\textit{Convex optimization.} The case of convex $F(\cdot)$ is particularly well documented \cite{moulines2011non,agarwal2009information,fontaine2021convergence}. Researchers have studied how to deal with convex constraints, proximal operators, general Bregman divergences \cite{nemirovski1983problem,beck2003mirror}, and leveraging averaging and momentum schemes \cite{polyak1992acceleration,Sebbouh_AS_Conv_SHB_2021,gadat2018_SHB,Li_On_Last_Iterate_Mom_2022}. In the convex case, the global convergence of gradient methods in the function value, i.e., find $x\in \cX$ with $\Exp{F(x) - F(x^*)}\leq \varepsilon$ for any $\varepsilon>0$, is naturally possible and the sample complexity required is $\cO\rb{\varepsilon^{-2}}$.\footnote{For P-SGD under smooth and bounded variance assumptions, A.1' and A.2' in \cref{sec:HC_class}, or for SM under Lipschitz continuity and bounded second moment of stochastic sub-gradients, i.e., A.2. }
	

    \textit{Non-convex optimization.} In the last decade, the interest in the optimization community shifted towards general non-convex problems (often smooth or weakly convex), where only convergence to a FOSP is possible in general \cite{khaled2020better,drori2020complexity,arjevani2023lower,Yang_Two_Sides_2023}, i.e., find $x\in \cX$ with $\Exp{\norm{\nabla F(x) } }\leq \epsilon$ when $F(\cdot)$ is smooth. Similar to developments in convex optimization, convergence of non-convex SGD extends to constrained/proximal setting \cite{ghadimi2013stochastic,lan2020first,bottou2018optimization}, mirror descent \cite{zhang_he_2018_msgd,davis2018stochastic_high_order_growth}, momentum \cite{liu2020improved,fatkhullin2023momentum}, variance-reduction \cite{cutkosky2019momentum,arjevani2023lower}, and biased gradient setting~\cite{hu2020biased,hu2021bias,hu2023contextual}. For the more general weakly-convex case \cite{davis2019stochastic,mai2020convergence,zhang_he_2018_msgd},  the convergence guarantees are usually with respect to a gradient norm of a smoothed objective. Some works consider non-convex functions with a specific compositional structure similar to \eqref{problem:reformulation}, e.g. the composition of a convex function with a differentiable and smooth map $c(\cdot)$, see \cite{ModifiedGaussNewton_Nesterov_2003,Lewis_Wright_2015,drusvyatskiy2019efficiency,StochasticCompositeGradient2019}. Recently a number of works focus on non-convex non-smooth optimization (beyond weak convexity) and develop convergence for suitably defined notions of FOSP \cite{cutkosky23a_optimal,zhang2020complexity_nonsmooth_nonconvex,jordan2023deterministic}. Although the above works consider non-convex problems, which find a wide range of applications, they often only provide convergence to a FOSP rather than global convergence in the function value.

	\bibliographystyle{plain}
	\bibliography{references}
\end{document}